\newcommand{\Ueberschrift}{Categories of abelian varieties over finite fields I \\[1ex] Abelian varieties over $\bF_p$}
\newcommand{\Kurztitel}{Abelian varieties over $\bF_p$}
\DeclareMathOperator{\rH}{H}
\DeclareMathOperator{\rM}{M}
\DeclareMathOperator{\rT}{T}
\newcommand{\bC}{{\mathbb C}}
\newcommand{\bF}{{\mathbb F}}
\newcommand{\bH}{{\mathbb H}}
\newcommand{\bQ}{{\mathbb Q}}
\newcommand{\bR}{{\mathbb R}}
\newcommand{\bZ}{{\mathbb Z}}
\newcommand{\cC}{{\mathscr C}}
\newcommand{\cK}{{\mathscr K}}
\newcommand{\cX}{{\mathscr X}}
\newcommand{\dA}{{\mathcal A}}
\newcommand{\dB}{{\mathcal B}}
\newcommand{\dD}{{\mathcal D}}
\newcommand{\dK}{{\mathcal K}}
\newcommand{\dM}{{\mathcal M}}
\newcommand{\dO}{{\mathcal O}}
\newcommand{\dR}{{\mathcal R}}
\newcommand{\fp}{{\mathfrak p}}
\DeclareSymbolFont{cyrletters}{OT2}{wncyr}{m}{n}
\DeclareMathSymbol{\Sha}{\mathalpha}{cyrletters}{"58}
\newcommand{\surj}{\twoheadrightarrow} 
\newcommand{\inj}{\hookrightarrow}
\DeclareMathOperator{\id}{id}
\DeclareMathOperator{\pr}{pr}
\newcommand{\ev}{{\rm ev}}
\DeclareMathOperator{\Hom}{Hom}
\DeclareMathOperator{\End}{End}
\DeclareMathOperator{\coker}{coker}
\DeclareMathOperator{\im}{im}
\DeclareMathOperator{\Refl}{{\sf Refl}}
\DeclareMathOperator{\AV}{{\sf AV}}
\DeclareMathOperator{\Rk}{rk}
\DeclareMathOperator{\Spec}{Spec}
\DeclareMathOperator{\Pic}{Pic}
\DeclareMathOperator{\ord}{ord}
\DeclareMathOperator{\Ext}{Ext}
\newcommand{\ph}{\varphi}
\newcommand{\com}{\text{\rm com}}
\DeclarePairedDelimiter\abs{\lvert}{\rvert}
\newcommand{\ov}[1]{\mbox{${\overline{#1}}$}} 
\newtheorem{thm}{Theorem}
\newtheorem{prop}[thm]{Proposition}
\newtheorem{lem}[thm]{Lemma}
\newtheorem{cor}[thm]{Corollary}
\theoremstyle{definition}
\newtheorem{defi}[thm]{Definition}
\theoremstyle{remark}
\newtheorem{rmk}[thm]{Remark}
\newtheorem{ex}[thm]{Example}
\newenvironment{pro*}[1][Proof]{{\it{#1:}} }{}
\newenvironment{pro**}[1][]{{\it{#1}} }{\hfill $\square$}
\numberwithin{equation}{section}
\newlist{enumer}{enumerate}{2}
\setlist[enumer]{label=(\roman*),align=left,labelindent=0pt,leftmargin=*,widest = (iii)}
\newlist{enumerar}{enumerate}{1}
\setlist[enumerar]{label=\arabic*.,align=left,labelindent=0pt,leftmargin=*,widest = 8.}
\newlist{enumera}{enumerate}{2}
\setlist[enumera]{label=(\arabic*),align=left,labelindent=0pt,leftmargin=*,widest = (8)}
\newlist{enumeral}{enumerate}{2}
\setlist[enumeral]{label=(\alph*),align=left,labelindent=0pt,leftmargin=*,widest = (m)}
\begin{document}

\hrule width\hsize
\vskip 0.4cm

\title[\Kurztitel]{\Ueberschrift} 
\author{Tommaso Giorgio Centeleghe}
\address{Tommaso Giorgio Centeleghe, IWR, Universit\"at Heidelberg, Im Neuenheimer Feld 368, 69120 Heidelberg, Germany}
\email{tommaso.centeleghe@iwr.uni-heidelberg.de}

\author{\sc Jakob Stix}
\address{Jakob Stix, Institut f\"ur Mathematik, Johann Wolfgang Goethe--Universit\"at Frankfurt, Robert-Mayer-Stra\ss e~6--8,
60325~Frankfurt am Main, Germany}
\email{stix@math.uni-frankfurt.de}

\thanks{The first author was supported by a project jointly funded by the DFG Priority Program SPP 1489 and the Luxembourg FNR}
\date{\today} 

\maketitle

\begin{quotation} 
\noindent \small {\bf Abstract} --- We assign functorially a $\bZ$-lattice with semisimple Frobenius action to each abelian variety over 
$\bF_p$. This establishes an equivalence of categories that describes abelian varieties over $\bF_p$ avoiding $\sqrt{p}$ as an eigenvalue of Frobenius in terms of
simple commutative algebra. The result extends the isomorphism classification of Waterhouse and Deligne's equivalence for ordinary abelian varieties.
\end{quotation}

\DeclareRobustCommand{\SkipTocEntry}[5]{}

\setcounter{tocdepth}{1} {\scriptsize \tableofcontents}

\section{Introduction}

\subsection{}
\label{sec:set-up}
Let $p$ be a prime number, $\overline\bF_p$ an algebraic closure of the prime field $\bF_p$ with $p$ elements, and $\bF_q\subset\overline\bF_p$
the subfield with $q$ elements, where $q=p^e$ is a power of $p$. The category 
\[
\AV_q
\]
of abelian varieties over $\bF_q$ is an additive category where for all objects $A$, $B$ the abelian groups $\Hom_{\bF_q}(A,B)$ are free of finite rank.
Even though the main result of this paper concerns abelian varieties over the prime field $\bF_p$, the general theme of our work is describing
suitable subcategories $\sf{C}$ of $\AV_q$ by means of lattices $T(A)$ functorially attached to abelian varieties $A$ of $\sf{C}$.
In contrast to the characteristic zero case, if we insist that
\begin{equation}\label{ZrankofT}
\Rk_{\bZ}(T(A))=2\dim(A)
\end{equation}
then it is {\it not} possible to construct $T(A)$ on the whole category $\AV_q$ (see Section~\S\ref{sec:instanceofserreobservation}).
However, if we take $\sf{C}$ to be the full subcategory 
\[
\AV_q^{\rm ord}
\]
of ordinary abelian varieties, Deligne has shown that a functor $A\mapsto T(A)$ satisfying \eqref{ZrankofT} exists and gives an equivalence between $\AV_q^{\rm ord}$
and the category of finite free $\bZ$-modules $T$ equipped with a linear map $F:T\to T$ satisfying a list
of axioms easy to state (cf.~\cite{De}~\S7).

Inspired by a result of Waterhouse (cf.~\cite{Wa}, Theorem~6.1), in the present work we show that a
description in the style of Deligne can in fact be obtained, when $q=p$, for a considerably larger subcategory $\sf{C}$ of $\AV_p$, which excludes only a
single isogeny class of simple objects of $\AV_p$ from occuring as an isogeny factor (see Theorem \ref{MainThm}). Deligne's method is an elegant application of Serre--Tate theory of canonical
liftings of ordinary abelian varieties, whereas our method, closer to that used by Waterhouse, does not involve lifting abelian varieties to characteristic zero.
Even if the main result of this paper generalizes the $q=p$ case of Deligne's theorem, it is unlikely that a proof generalizing Deligne's lifting strategy be possible.

\subsection{}
\label{sec:category}
A \textit{Weil $q$--number} $\pi$ is an algebraic integer, lying in some unspecified field of characteristic zero, such that for any
embedding $\iota : \bQ(\pi) \hookrightarrow\bC$ we have
\[
\abs{\iota(\pi)}=q^{1/2},
\]
where $\abs{-}$ is the ordinary absolute value of $\bC$. Two Weil $q$--numbers $\pi$ and $\pi'$ are
\textit{conjugate} to each other  if there exists an isomorphism $\bQ(\pi) \xrightarrow{\sim} \bQ(\pi')$ 
carrying $\pi$ to $\pi'$, in which case we write $\pi\sim\pi'$. We will denote by 
\[
W_q
\]
the set of conjugacy classes of Weil $q$--numbers. A Weil $q$--number is either totally real or totally imaginary, hence it makes sense to speak of a \textit{non-real} element of $W_q$.

Let $A$ be an object of $\AV_q$, denote by $\pi_A:A\to A$ the Frobenius isogeny of $A$ relative to $\bF_q$. 
If $A$ is $\bF_q$--simple then 
$\End_{\bF_q}(A)\otimes\bQ$ is a division ring,
and a well known result of Weil says that $\pi_A$ is a Weil $q$--number inside 
the number field $\bQ(\pi_A)$. 
Let
\begin{equation}\label{Poincarereducibility}
A\sim\prod_{1\leq i\leq r} A_i^{e_i}
\end{equation}
be the decomposition of $A$ up to $\bF_q$--isogeny into powers of simple, pairwise non--isogenous factors $A_i$.
The \emph{Weil support} of $A$ is defined as the subset
\[
w(A)=\{\pi_{A_1}, \ldots, \pi_{A_r}\} \subseteq W_q
\]
given by the conjugacy classes of the Weil numbers $\pi_{A_i}$ attached to the simple factors $A_i$.
By Honda--Tate theory, the conjugacy classes of the $\pi_{A_i}$ are pairwise distinct, moreover any class in $W_q$ arises as
$\pi_{A}$, for some $\bF_q$--simple abelian variety $A$, uniquely determined up to $\bF_q$--isogeny (cf. \cite{Ta}, Th\'eor\`eme 1).

\subsection{}
\label{sec:equivalence}
Consider now the case $q=p$. Using Honda--Tate theory it is easy to see that for a simple object $A$ of $\AV_p$ the ring
$\End_{\bF_p}(A)$ is commutative if and only if $\pi_A\not\sim\sqrt p$, i.e., 
if and only if the Frobenius isogeny $\pi_A:A\to A$ defines a non-real Weil $p$--number (cf.~\cite{Wa}, Theorem~6.1). Let
\[
\AV_p^{\rm com}
\]
be the full subcategory of $\AV_p$ given by all objects $A$ such that $w(A)$ does not contain the conjugacy class of $\sqrt p$.
Equivalently,
$\AV_p^{\rm com}$ is the largest full subcategory of $\AV_p$ closed under taking cokernels containing all simple objects
whose endomorphism ring is commutative. Since the Weil $p$--number $\sqrt p$ is associated to an $\bF_p$--isogeny class of simple, supersingular abelian surfaces
(cf. \cite{Ta}, exemple (b) p.~97), we have a natural inclusion $\AV_p^{\rm ord}\subset\AV_p^{\rm com}$.

\smallskip

The main result of this paper, proven at the end of Section~\S\ref{sec:proofmain}, is the following.

\begin{thm} 
\label{MainThm} 
There is an ind-representable contravariant functor  
\[
A \mapsto  (T(A),F)
\]
which induces an anti-equivalence between $\AV_p^{\rm com}$ and the category of
pairs $(T, F)$ given by a finite, free $\bZ$-module $T$ and an endomorphism $F:T\to T$ satisfying the following properties.
\begin{enumer}
\item $F\otimes\bQ$ is semisimple, and its eigenvalues are non-real Weil $p$--numbers.
\item There exists a linear map $V:T\to T$ such that $FV=p$.
\end{enumer}
Moreover, the lattice  $T(A)$ has rank $2\dim(A)$ for all $A$ in $\AV_p^{\rm com}$, and $F$ is equal to $T(\pi_A)$.
\end{thm}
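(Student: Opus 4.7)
The strategy is to build $T$ one simple isogeny class at a time, exploiting the orthogonality $\Hom(A,B) = 0$ for non-isogenous simple $A,B \in \AV_p^{\com}$. Both sides of the claimed anti-equivalence decompose compatibly as products indexed by non-real $\pi \in W_p$, so it suffices to treat each isotypic subcategory separately and check that the construction is additive across different Weil classes.

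For a fixed non-real $\pi$, Honda–Tate produces a simple $A_\pi \in \AV_p^{\com}$ with $\End^0(A_\pi) = \bQ(\pi)$ and $2\dim(A_\pi) = [\bQ(\pi):\bQ]$. On the isotypic subcategory of objects isogenous to some $A_\pi^n$ I would define
\[
T(A) \;=\; \Hom_{\bF_p}(A,\, A_\pi^\infty), \qquad F = T(\pi_A),
\]
where $A_\pi^\infty$ is an ind-object obtained as a filtered colimit of copies of $A_\pi^n$ along a cofinal system of isogenies, chosen large enough that every $\bZ[\pi, p/\pi]$-stable lattice in $\bQ(\pi)^n$ is hit by a $\Hom$-group into the system. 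Over $\bQ$ this identifies $T(A)\otimes\bQ$ with $\bQ(\pi)^n$ and $F\otimes\bQ$ with multiplication by $\pi$, making (i) immediate (the minimal polynomial of $\pi$ is separable in characteristic zero) and yielding (ii) from the identity $\pi\cdot(p/\pi) = p$ in $\bQ(\pi)$, where $p/\pi$ is again an algebraic integer whose action on $T(A)$ supplies $V$. The rank is $2\dim(A) = n\cdot[\bQ(\pi):\bQ]$ by inspection.

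Full faithfulness on each isotypic block reduces, via Tate's isogeny theorem, to Waterhouse's Theorem~6.1, which classifies $\bF_p$-isomorphism classes in the isogeny class of $A_\pi$ by $\End$-stable lattices in $\bQ(\pi)^n$ under the assignment $A \mapsto \Hom(A, A_\pi)$. Between isotypic components for distinct $\pi$ both sides of the comparison map vanish, so full faithfulness extends to all of $\AV_p^{\com}$. For essential surjectivity, given $(T,F)$ satisfying (i) and (ii), the primary decomposition of $F$ splits $T\otimes\bQ$ along the conjugacy classes of its eigenvalues $\pi$; on each summand the lattice $T$ is stable under $\bZ[F, p/F] = \bZ[\pi, p/\pi]$, and Waterhouse's construction recovers an abelian variety in the isogeny class of $A_\pi$ whose $T$ matches.

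The main obstacle is the passage from Waterhouse's isomorphism classification—most cleanly phrased for invertible ideals over a fixed order—to the full flexibility of axiom (ii), which permits \emph{any} $F$- and $V$-stable lattice, including those for which $\End$ is a non-maximal order. Proving that every such lattice is actually realised forces the ind-object $A_\pi^\infty$ to be chosen carefully (large enough to hit non-principal modules, yet filtered so that $\Hom(A,-)$ commutes with the colimit), and this is precisely why the functor $T$ is only ind-representable rather than representable. Assembling a single ind-system that accommodates all lattices uniformly, and verifying that the resulting functor is additive across different Weil classes so as to splice the isotypic pieces into a functor on the whole of $\AV_p^{\com}$, is the technical heart of the argument.
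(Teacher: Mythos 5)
Your decomposition strategy breaks down at its very first step. You write that ``Both sides of the claimed anti-equivalence decompose compatibly as products indexed by non-real $\pi \in W_p$,'' and that one can therefore ``splice the isotypic pieces.'' This is false integrally. The category $\AV_p^{\rm com}$ is \emph{not} a product of isotypic subcategories: an abelian variety $A$ with $w(A) = \{\pi_1,\pi_2\}$ need not split as $A_1 \times A_2$ with $w(A_i) = \{\pi_i\}$ — that decomposition holds only up to isogeny. Likewise, a finite free $\bZ$-lattice $T$ with $F$-action satisfying (i) and (ii) does not split as a direct sum of $F$-isotypic sublattices; only $T\otimes\bQ$ does. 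What one must work with is the ring $R_w = \bZ[F,V]/(FV-p,h_w(F,V))$ for a finite set $w$ of conjugacy classes, and $R_w$ is in general a \emph{strict} suborder of $\prod_{\pi\in w} R_\pi$: the paper's Example~\ref{ex:congruences} computes explicitly how congruences between the minimal polynomials of distinct $\pi$'s produce this defect. The Remark after Proposition~\ref{ConstructionApi} records the corresponding fact on the geometric side: the product $\prod_{\pi\in w} A_{\{\pi\}}$ of the single-Weil-number Waterhouse varieties typically fails to have endomorphism ring $R_w$, so it cannot serve as the representing object.

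Because of this, Waterhouse's Theorem~6.1, which treats one simple isogeny class at a time, does not by itself give the multi-class statement, and there is no cheap way to ``assemble'' isotypic answers. The paper's proof replaces the spliced product by a genuinely new input: the observation that $R_w$ is a one-dimensional Gorenstein ring (Theorem~\ref{thm:gorenstein}). This is used crucially and repeatedly: to identify reflexive $R_w$-modules with $\bZ$-free ones (Lemma~\ref{lem:reflexiveZfree}); to make $N\mapsto\Hom(N,T_\ell(A_w))$ a duality on reflexive $R_w\otimes\bZ_\ell$-modules in the full-faithfulness argument (Theorem~\ref{thm:T(A)onAVpi}); and to kill $\Ext^1$ when showing the transfer maps $A_v\hookrightarrow A_w$ are compatible (Lemma~\ref{lem:vanishingext1} in Proposition~\ref{prop:changeofWeilstring}). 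Your proposal nowhere invokes this, and without it the passage from ``invertible $\bZ[\pi,p/\pi]$-lattices'' to ``arbitrary $F,V$-stable lattices,'' which you flag as ``the main obstacle,'' cannot be closed. Finally, the compatible choice of the ind-system $\dA = (A_w)$ is obtained in the paper by a compactness argument on the finite non-empty sets $Z(w)$ of suitable isomorphism classes, not by hand-choosing a single $A_\pi^\infty$; the latter does not address the congruence interactions between different $\pi$'s at all. In short, your outline mislocates the difficulty: it is not in making the ind-object ``large enough'' within a single isotypic block, but in controlling the glueing of blocks along their congruences — and that is exactly what $R_w$ and its Gorenstein property are for.
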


In order to prove the theorem we consider in Section~\S\ref{GorensteinBis} a family of Gorenstein rings 
\[
R_w = \bZ[F,V]/(FV-p, h_w(F,V))
\]
indexed by the finite subsets $w \subseteq W_p$, where $h_w(F,V)$ is a certain symmetric polynomial built out of the minimal polynomials over $\bQ$ of the elements of $w$.
An object $(T,F)$ in the target category of the functor $T(-)$ of Theorem \ref{MainThm} is nothing but an $R_w$-module, for $w\subset W_p$ large enough, that is free of finite rank as a
$\bZ$-module. In this translation, the linear map $F:T\to T$ is given by the action of the image of $F$ in $R_w$, and the relation $h_w(F,V)$ in $R_w$ encodes precisely that $F \otimes \bQ$ acts
semisimply and with eigenvalues given by Weil $p$-numbers lying in $w$ (see Sections~\S\ref{sec:symmetricpoly}, ~\S\ref{sec:structureminimalcentral} and \S\ref{sec:targetcategory}).
Thanks to the Gorenstein property, these $R_w$-modules are precisely the reflexive $R_w$-modules, the category that they form will be denoted by (see Section~\S\ref{sec:reflexive})
\[
\Refl(R_w).
\]

For  $v \subseteq w$, the corresponding rings are linked by natural surjective maps
$\pr_{v,w}:R_w\longrightarrow R_v$. 
We denote by $\dR_p^\com$ the pro-system $(R_w,\pr_{v,w})$ with $w \subseteq W_p$ ranging over the finite subsets avoiding the conjugacy class of $\sqrt{p}$. We further set 
\[
\Refl(\dR_p^\com) = \varinjlim_{w \subseteq W_p\setminus\{\sqrt p\}} \Refl(R_w).
\]
We refer to Section~\S\ref{sec:targetcategory} for details.

In this language, Theorem~\ref{MainThm} can be stated as saying that
\[
T  : \AV_p^\com \to \Refl(\dR_p^\com)
\]
is an anti--equivalence of categories. While this formulation of the main result is closer to the perspective we adopted in its proof,
the more concrete statement we chose to give above allows an immediate 
comparison to Deligne's result in \cite{De}.

\subsection{}
\label{sec:theringsRw}

The rings $R_w$ studied in Section~\S\ref{GorensteinBis} are in fact defined for any finite subset $w\subseteq W_q$.  They appear
naturally in connection to abelian varieties, in that for any $A$ in $\AV_q$ the natural map
\begin{equation} \label{eq:R-structure}
\bZ[F,V]/(FV-q)\longrightarrow\End_{\bF_q}(A)
\end{equation}
sending $F$ to $\pi_A$ and $V$ to the Verschiebung isogeny $q/\pi_A$ induces an identification between $R_{w(A)}$ and the subring $\bZ[\pi_A,q/\pi_A]$
of $\End_{\bF_q}(A)$, which has finite index in the center (see Section 2.1).
The rings $R_w$ have been already considered in \cite{Wa} and
\cite{howe:ppav} for example, however, to our best knowledge, our observation that they are 
almost\footnote{When $[\bF_q:\bF_p]=e$ is even we must require that the set $w$ either contains both or none of the two rational Weil $q$--numbers $\pm q^{e/2}$. In particular, $R_w$ is Gorenstein for a cofinal subsystem of the finite subsets $w \subseteq W_q$ 
 with respect to inclusion.} 
always Gorenstein remained so far unnoticed (see Theorem~\ref{thm:gorenstein}).

An $\dR_p^\com$-linear structure on $\AV_p^\com$ can be deduced from the map \eqref{eq:R-structure} (see Section~\S\ref{linearstructures}).
The requirement that $F=T(\pi_A)$ precisely means
that the functor $T(-)$ is an $\dR_p^\com$-linear functor (see Section ~\S\ref{sec:targetcategory}).

\subsection{}
\label{sec:outline of proof} 
The proof of the theorem consists of two steps. First, for any finite subset $w\subseteq W_p$
not containing the conjugacy class of $\sqrt p$, we construct a certain abelian variety $A_w$ isogenous to the product of all simple objects
attached to the elements of $w$ via Honda--Tate theory. 
The object $A_w$ is chosen in its isogeny class with the smallest possible endomorphism ring, i.e., such that the natural map 
\[
R_w \to \End_{\bF_p}(A_w)
\]
is an isomorphism (see Proposition \ref{ConstructionApi}). In order to show the existence of such an $A_w$, which already appears in \cite{Wa} Theorem~6.1
if $w$ consists of a single element, the assumption $q=p$ plays an important role.
Exploiting the Gorenstein property of $R_w$, in Theorem~\ref{thm:T(A)onAVpi} we show that the functor
$\Hom_{\bF_p}(-,A_w)$ gives a contravariant equivalence
\[
\Hom_{\bF_p}(-,A_w):\AV_w\stackrel{\sim}{\longrightarrow}\Refl(R_w)
\]
where $\AV_w$ is the full subcategory of $\AV_p$ given by all abelian varieties $A$  with $w(A)\subseteq w$.

The second step consists in showing that the abelian varieties $A_w$ previously constructed can be chosen in such a way that the functors $\Hom_{\bF_p}(-,A_w)$
interpolate well, and define a functor on $\AV_p^\com$. More precisely we show the existence of an ind-system
\begin{equation}\label{inductivesystem}
\dA= (A_w, \ph_{w,v}),
\end{equation}
indexed by finite subsets $w\subseteq W_p$ not containing the conjugacy class of $\sqrt p$, 
such that the corresponding direct limit of finite free $\bZ$--modules
\[
T(A)=\varinjlim_w\Hom_{\bF_p}(A, A_w)
\]
stabilizes. The contravariant functor $T(-)$ ind-represented by $\dA$  will produce the required anti--equivalence.

\subsection{}
\label{sec:instanceofserreobservation}

As Serre has observed, it is {\it not} possible to functorially construct a lattice $T(A)$ satisfying the expected 
$\Rk_{\bZ}(T(A))=2\dim(A)$
on the category of abelian varieties over $\overline\bF_p$.
This is due to the existence of objects like supersingular elliptic curves $E$ over $\overline\bF_p$. As is well known, the division ring
$\End_{\overline\bF_p}(E) \otimes \bQ$ is  a non-split quaternion algebra over $\bQ$ and has no $2$-dimensional $\bQ$--linear representation that can serve as $T(E) \otimes \bQ$.
That just described is the same obstruction that prevents the existence of a Weil--cohomology for varieties over finite fields with rational coefficients.

Using the same argument one can show the non-existence of a lattice $T(A)$ as above on the category $\AV_q$, where $q$ is a square.
When $q$ is not a square, the correct instance of Serre's observation, which prevents Theorem~\ref{MainThm} from extending to all of $\AV_p$, is
given by the isogeny class of $\bF_q$-simple, supersingular abelian surfaces associated via Honda--Tate theory to the real, non rational,
Weil $q$--number $\sqrt q$. The endomorphism ring of any such surface $A$ is an order of a quaternion algebra over $\bQ(\sqrt{q})=\bQ(\sqrt{p})$ which is ramified at the two real
places (cf.~\cite{Wa} p.~528). It follows that $\End_{\bF_q}(A)\otimes\bR \simeq \bH \times \bH$ is a product of two copies of the Hamilton quaternions $\bH$. Thus it admits no
faithful representation on a $4$-dimensional real vector space, such as $T(A)\otimes\bR$ would give rise to. 

\subsection{}
The dual abelian variety establishes an anti--equivalence $A\mapsto A^t$ of $\AV_q$ which preserves Weil supports and has the effect
of switching the roles of Frobenius and Verschiebung endomorphisms relative to $\bF_q$. This is to say that
\[
(\pi_A)^t=q/\pi_{A^t}
\]
as isogenies from $A^t$ to itself. On the module side, we define a covariant involution of $\Refl(\dR_p^\com)$ denoted by $M\mapsto M^\tau$ which interchanges the roles of $F$ and $V$, i.e., such that 
\[
{(T,F)}^\tau=(T,p/F).
\]

Using these two dualities we can exhibit a covariant version of the functor $T(-)$ of Theorem~\ref{MainThm}. More precisely, define
\[
T_\ast(A)= T(A^t)^\tau  
\]
as the pair given by the $\bZ$--module $T(A^t)$ equipped with the linear map $p/T(\pi_{A^t})$.
In the notation as pairs $T_\ast(A)$ takes the form
\[
(T(A^t), p/T(\pi_{A^t})) = (T(A^t), T((\pi_A)^t)) = (T_\ast(A),T_\ast(\pi_A)). 
\]
The functor $T_\ast(-)$ gives a covariant, $\dR_p^\com$--linear equivalence
\begin{equation}\label{covariantversion}
T_\ast : \AV_p^\com \to \Refl(\dR_p^\com)
\end{equation}
which is pro-represented by the system $\dA^t = (A_w^t, \ph_{w',w}^t)$ dual to \eqref{inductivesystem}.
In the definition of $T_\ast(-)$ it is necessary to apply the involution $\tau$ to $T(A^t)$ in order to guarantee that $T_\ast$ be $\dR_p^\com$--linear.

In Section~\S\ref{sec:comparedeligne} we compare $T_\ast(-)$ restricted to $\AV_p^{\ord}$ with Deligne's functor from \cite{De} \S7 that we denote by $T_{{\rm Del},p}(-)$. The comparison makes use of a compatible pro-system of projective $R_w$-modules $M_w$ of rank $1$ for all finite subsets $w \subseteq W_p$ consisting only of conjugacy classes of ordinary Weil $p$--numbers. Proposition~\ref{prop:compareDeligne} then describes, for all abelian varieties $A$ over $\bF_p$ with $w(A) \subseteq w$, a natural isomorphism 
\[
T_{{\rm Del,p}}(A)  \otimes_{R_w}  M_w   \xrightarrow{\sim} T_\ast(A).
\]
Furthermore, by choosing a suitable ind-representing system $\dA = (A_w,\ph_{v,w})$ we may assume that $M_w = R_w$ for all $w$, i.e., the anti-equivalence of Theorem~\ref{MainThm} may be chosen to extend in its covariant version Deligne's equivalence, see Proposition~\ref{prop:extendDeligne} for details.

\subsection{}
Finally, we indicate how to recover the $\ell$-adic Tate module 
$T_\ell(A)$,
for a prime $\ell\neq p$, and the contravariant Dieudonn\'e module 
$T_p(A)$
(cf. \cite{Wa}~\S1.2) from the module $T(A)$. This involves working with the formal Tate module $T_\ell(\dA)$ and the formal Dieudonn\'e module $T_p(\dA)$
of the direct system $\mathcal{A}$, respectively defined as the direct limit of $T_\ell(A_w)$ and the inverse limit of the $T_p(A_w)$, with transition maps obtained via
functoriality of $T_\ell$ and $T_p$. More concretely we have natural isomorphisms
\begin{align*}
T_\ell(A) & \simeq 
\Hom_{\mathcal{R}_\ell}(T(A)\otimes\bZ_\ell, T_\ell(\mathcal{A})), \\
T_p(A) & \simeq  (T(A) \otimes \bZ_p) \hat{\otimes}_{\dR_p} T_p(\dA),
\end{align*}
see Proposition~\ref{prop:recovertate} and \ref{prop:recoverdieudonne} for notation and proofs.
In this respect the functor $T(-)$  can be interpreted as an integral lifting of the Dieudonn\'e module functor
$T_p(-)$.

In a forthcoming paper we will apply the method used here to study certain categories of abelian varieties over a finite field which is larger that $\bF_p$. 
Therefore, although Theorem~\ref{MainThm} deals with abelian varieties over $\bF_p$, we only restrict to the case $q=p$ when it becomes necessary.

\smallskip

\addtocontents{toc}{\SkipTocEntry}
\subsection*{Acknowledgments} 
The authors would like to thank Gebhard B\"ockle for stimulating discussions and for his suggestion of the symmetric polynomial $h_\pi(F,V)$. We thank Filippo
Nuccio for valuable comments on an earlier version of the manuscript, and Hendrik Lenstra for his interesting observations on local complete intersections. 
Special thanks go to Brian Conrad and Frans Oort for their attentive reading of a preliminary version of our work, and for the prompt and interesting feed back they gave us. Finally
we thank the anonymous referees for their quick work in reviewing the paper.

\section{On the ubiquity of Gorenstein rings among minimal central orders}
\label{GorensteinBis}

\subsection{Minimal central orders}
\label{sec:defineRw}
Let $w\subseteq W_q$ be any finite set of conjugacy classes of Weil $q$--numbers. Choose Weil $q$--numbers $\pi_1,\ldots,\pi_r$
representing the elements of $w$, and consider the ring homomorphism
\begin{equation}\label{RtoProd}
\bZ[F,V]/(FV-q)\longrightarrow\prod_{1\leq i\leq r}\bQ(\pi_i)
\end{equation}
sending $F$ to $(\pi_1,\ldots,\pi_r)$ and $V$ to $(q/\pi_1,\ldots,q/\pi_r)$.

\begin{defi} 
\label{defi:Rw}
The \emph{minimal central order $R_w$} is the quotient 
\begin{equation} \label{eq:defineRw}
\bZ[F,V]/(FV-q) \to R_w
\end{equation}
by the kernel of the homomorphism \eqref{RtoProd}. The image of $F$ in $R_w$ will be denoted by $F_w$, and the image of $V$ by $V_w$.
\end{defi}
The construction of the ring $R_w$ is independent of the chosen Weil $q$--numbers in their respective conjugacy classes. 
When $w$ consists of a single conjugacy class of a Weil number $\pi$, the ring $R_{\{\pi\}}$, isomorphic to the order of $\bQ(\pi)$ generated by $\pi$ and $q/\pi$,
will sometimes be denoted simply by $R_\pi$. Since the representatives $\pi_1,\ldots,\pi_r$ are pairwise non--conjugate, there is a canonical finite index inclusion
\[
R_w \subseteq \prod_{\pi \in w}R_{\pi}, 
\]
in particular 
\begin{equation} \label{eq:rationalRw}
R_w \otimes \bQ = \prod_{\pi \in w} \bQ(\pi).
\end{equation}
Moreover, for finite subsets $v \subseteq w\subseteq W_q$ we have a natural surjection
 \[
\pr_{v,w}:R_{w}\longrightarrow R_v.
 \]
 
Our main goal in this section is showing that, under a mild assumption on $w$, the ring $R_w$ is a one dimensional Gorenstein ring.
This will be proved in Section \ref{sec:structureminimalcentral}, where we obtain a description of $R_w$ by identifying the relations between the
generators $F$ and $V$.

\begin{ex}\label{ex:congruences} The equality of closed subschemes
\[
\Spec(R_w) = \bigcup_{\pi \in w}  \Spec(R_\pi) \subseteq \Spec \big(\bZ[F,V]/(FV-q)\big)
\]
shows that the spectrum of $R_w$ is obtained by glueing the spectra of the rings $R_\pi$ along their various intersections inside
$\Spec \big(\bZ[F,V]/(FV-q)\big)$. This roughly means that it is the congruences between Weil $q$-numbers who are responsible for
$R_w$ differing from the product of the $R_\pi$ for all $\pi\in w$.

We measure in a special situation the deviation of $R_w$ from being
isomorphic to $\prod_{\pi \in w} R_\pi$. Let $\pi_i$ for $ i =1,2$ be quadratic Weil $q$-numbers with minimal polynomial 
\[
x^2 - \beta_i x + q,
\]
where $\beta_i\in\bZ$, and set $\Delta = \beta_1 - \beta_2$. Since $q/\pi_i = \beta_i - \pi_i$ we have 
\[
R_{\pi_i} = \bZ[\pi_i] \simeq \bZ[x]/(x^2 - \beta_i x + q),
\]
moreover the subring $R_w\subseteq \bZ[\pi_1] \times \bZ[\pi_2]$ is generated as a $\bZ$-algebra by 
\[
(0,\Delta), (\pi_1,\pi_2) \in \bZ[\pi_1] \times \bZ[\pi_2],
\]
since it is generated by $(\pi_1,\pi_2)$ and $(\beta_1-\pi_1,\beta_2-\pi_2)$.
Because $\beta_1 \equiv \beta_2$ modulo $\Delta$, there are isomorphisms of  quotients 
\[
\bZ[\pi_1]/ \Delta \bZ[\pi_1] \simeq \bZ[\pi_2]/ \Delta \bZ[\pi_2] =: R_0
\]
and $R_w$ becomes the fibre product 
\[
R_w = \bZ[\pi_1] \times_{R_0} \bZ[\pi_2],
\]
which is an order  of index $\Delta^2$ in the product $R_{\pi_1} \times R_{\pi_2}$. The congruences between $\pi_1$ and $\pi_2$ 
are encoded by the closed subscheme of $\Spec \big(\bZ[F,V]/(FV-q)\big)$ given by
\[
\Spec(R_0) = \Spec(R_{\pi_1}) \cap \Spec(R_{\pi_2}).
\]
Note that the minimal polynomials $x^2 - \beta_i + q$ yield Weil $q$-numbers if and only if 
\[
\beta_i^2 < 4q.
\]
In particular, by letting $q$ range over the powers of the prime $p$, the Weil $q$-numbers $\pi_i$ may be chosen so as to have
$\Delta$ divisible by an arbitrary integer.
\smallskip
\end{ex}

\subsection{Connection to abelian varieties} We proceed to link $R_w$ to abelian varieties over $\bF_q$. Any such $A$ has two distinguished isogenies
given by the Frobenius $\pi_A$ and the Verschiebung $q/\pi_A$ relative to $\bF_q$.
The $\bQ$--algebra $\End_{\bF_q}(A)\otimes\bQ$ is semi--simple, and its center is equal to the sub--algebra $\bQ(\pi_A)$ generated by $\pi_A$ (cf. \cite{Ta2}, Theorem~2).
It follows that any isogeny decomposition of $A$, as in \eqref{Poincarereducibility}, induces the isomorphism
\begin{equation}\label{DecompositionCenter}
\bQ(\pi_A)\simeq \prod_{\pi_{A_i} \in w(A)} \bQ(\pi_{A_i}),
\end{equation}
sending $\pi_A$ to $(\pi_{A_1},\ldots,\pi_{A_r})$, where $\pi_{A_1},\ldots,\pi_{A_r}$ are the Weil $q$--numbers defined by the simple factors of $A$, and $w(A)$ is the Weil support of $A$ as defined in the introduction.

From \eqref{DecompositionCenter} we deduce that the ring homomorphism
 \[
r_A:\bZ[F,V]/(FV-q)\longrightarrow\End_{\bF_q}(A)
 \]
sending $F$ to $\pi_A$ and $V$ to $q/\pi_A$ gives an identification between $R_{w(A)}$ and the image of $r_A$, namely the subring
 \[
\bZ[\pi_A,q/\pi_A]
 \]
which sits inside the center of $\End_{\bF_q}(A)$ with finite index. In this way we see that $R_{w(A)}$ plays the role of a lower bound 
for the center of $\End_{\bF_q}(A)$. This justifies the terminology we chose in its definition.

\begin{rmk}\label{occouranceofRw} One can raise the question of whether there exists an abelian variety $A$ with Weil support
$w$ such that the natural map $R_{w}\to\End_{\bF_p}(A)$ induced by $r_A$ gives an isomorphism between $R_w$ and the
center of $\End_{\bF_p}(A)$. In Proposition \ref{ConstructionApi} below, generalizing a result of Waterhouse, we obtain a partial
result in this direction.
\end{rmk}

\subsection{Linear structures over minimal central orders}\label{linearstructures} For a finite subset $w\subseteq W_q$ the full subcategory
\[
\AV_w\subseteq\AV_q
\]
consists of all abelian varieties $A$ such that $w(A)\subseteq w$ or, equivalently, such that
$r_A$ factors through the quotient $\bZ[F,V]/(FV-q)\to R_w$. Since for any morphism $f:A\to B$ in $\AV_q$ and
any $\eta\in\bZ[F,V]/(FV-q)$ the diagram
 \begin{equation}\label{diagramnaturalityfrob}
 \xymatrix@M+1ex{
 A \ar[r]^{f}\ar[d]_{r_A(\eta)} & B\ar[d]^{r_B(\eta)}\\
 A \ar[r]^{f}& B\\
 }
 \end{equation}
is commutative, as follows from the naturality of the Frobenius and Verschiebung isogenies, we deduce an $R_w$--linear structure on the category $\AV_w$.
Furthermore, for finite subsets $v \subseteq w$ the $R_w$--linear structure on $\AV_v$ induced by the fully faithful inclusion
$\AV_v \subseteq \AV_{w}$ is compatible, via the surjection $\pr_{v,w}$, with the $R_v$--linear structure on $\AV_v$.

\begin{rmk}\label{rmk:RwstructureviaFrobenius}
If $W\subseteq W_q$ is now any subset, denote by $\dR_W$ the projective system $(R_w,\pr_{w,v})$ as $w$ ranges through all finite subsets of $W$, and by 
\[
\AV_W
\]
the full subcategory of $\AV_q$ whose objects are all abelian varieties $A$ with $w(A)\subseteq W$. We will treat $\AV_W$ as the direct $2$-limit of the categories $\AV_w$,
for $w$ a finite subsets of $W$. The collection of $R_w$--linear structures on the subcategories $\AV_w\subseteq\AV_W$, which are linked by the
compatibility conditions described above, form what we will refer to as an $\dR_W$--linear structure on $\AV_W$.
\end{rmk}

\subsection{The symmetric polynomial} 
\label{sec:symmetricpoly}

Let $\pi$ be a Weil $q$--number. If $\bQ(\pi)$ has a real place then $\pi^2 = q$, so that $\bQ(\pi)$ is totally real, and $[\bQ(\pi):\bQ]$ is either $2$ or $1$
according to whether the degree $e=[\bF_q:\bF_p]$ is odd or even, respectively. In the first case there is only one conjugacy
class of real Weil $q$--numbers, in the second one there are two of them, given by the rational integers $q^{e/2}$ and $-q^{e/2}$.
In the general case where $\pi$ is not real, the field $\bQ(\pi)$ is a non-real CM field, with complex conjugation
induced by $\pi\mapsto q/\pi$.

The degree $2d = [\bQ(\pi):\bQ]$ is even, except for the two rational Weil $q$--numbers occurring for $e$ even. Set 
\[
P_\pi(x) = x^{2d} + a_{2d-1}x^{2d-1} + \ldots + a_1 x + a_0 \in \bZ[x] 
\]
for the normalized minimal polynomial of $\pi$ over $\bQ$, and accept that $d = 1/2$ in case $\pi\in\bZ$. The polynomial $P_\pi(x)$
depends only on the conjugacy class of $\pi$. The following lemma is well known (cf. \cite{howe:ppav}, Prop. 3.4).

\begin{lem} \label{lem:integers}
Let $\pi$ be a non-real Weil $q$--number. For $r\geq 0$, we have $a_{d-r} = q^r a_{d+r}$.
\end{lem}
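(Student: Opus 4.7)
The plan is to derive the coefficient relation from a functional equation on $P_\pi(x)$ that reflects the CM symmetry of $\bQ(\pi)$. Since $\pi$ is non-real, the paragraph preceding the lemma tells us that $\bQ(\pi)$ is a CM field whose complex conjugation is $\pi \mapsto q/\pi$. Consequently the Galois orbit of $\pi$ is stable under $\alpha \mapsto q/\alpha$, and this involution has no fixed points on the orbit (again by non-reality, since a fixed point would satisfy $\alpha^2 = q$ and force $\pi$ to be real). Hence the $2d$ roots of $P_\pi$ in an algebraic closure of $\bQ$ pair up into $d$ pairs $\{\alpha_i, q/\alpha_i\}$, giving the factorization
\[
P_\pi(x) = \prod_{i=1}^{d} (x - \alpha_i)(x - q/\alpha_i).
\]

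Next I would establish the functional equation
\[
x^{2d}\, P_\pi(q/x) = q^d\, P_\pi(x).
\]
This reduces to the one-pair identity $x^2(q/x - \alpha)(q/x - q/\alpha) = q(x - \alpha)(x - q/\alpha)$, which is verified by direct expansion (both sides equal $qx^2 - q(\alpha + q/\alpha)x + q^2$). Multiplying the $d$ pair-identities and using the factorization above yields the claim.

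Finally I would read off the coefficient relation. Expanding the left hand side gives
\[
x^{2d} P_\pi(q/x) = \sum_{k=0}^{2d} a_k\, q^k\, x^{2d-k},
\]
so the coefficient of $x^{d+r}$ is $a_{d-r}\, q^{d-r}$. On the right hand side the coefficient of $x^{d+r}$ is $q^d a_{d+r}$. Equating them produces $a_{d-r} = q^r a_{d+r}$, as asserted.

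There is no serious obstacle here: the argument is essentially a repackaging of the CM symmetry of the roots. The only delicate point is the genuine pairing in Step~1, which uses $\pi \ne q/\pi$ (non-reality); once that is in place, the functional equation and the coefficient comparison are routine.
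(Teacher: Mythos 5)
Your proof is correct, and it takes a cleaner route than the paper's. Both arguments start from the same observation: complex conjugation acts on the roots of $P_\pi$ by $\alpha\mapsto q/\alpha$, and this involution is fixed-point free precisely because $\pi$ is non-real. From there the paper computes directly with elementary symmetric functions: writing $(-1)^{d+r}a_{d-r}=\sum_{|I|=d+r}\alpha^I$, factoring out $\prod_i\alpha_i = q^d$, and reindexing the resulting sum over complements and conjugates to land on $(-1)^{d-r}q^r a_{d+r}$. You instead package the root pairing as the functional equation $x^{2d}P_\pi(q/x)=q^dP_\pi(x)$, verified pair-by-pair, and then read off the coefficient relation by comparing like powers. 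This is arguably the more conceptual route — it exposes the self-reciprocity of $P_\pi$ up to a $q$-twist as the structural reason for the coefficient symmetry, rather than re-deriving it combinatorially each time, and it avoids the multi-index bookkeeping. The paper's version stays closer to the elementary symmetric function expressions $a_k$ and requires no intermediate polynomial identity, which some readers may prefer. One small point worth making explicit in your write-up: the fixed-point-free claim uses that a Weil $q$-number is totally real or totally imaginary, so a single real conjugate would force $\pi$ itself to be real; you gesture at this (``force $\pi$ to be real'') but spelling it out would tighten Step 1.
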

\begin{proof} We can arrange the roots $\alpha_1, \ldots, \alpha_{2d}$ of $P_\pi(x)$ so that $\alpha_i$ and $\alpha_{2d+1-i}$ are complex conjugates of each
other, which is to say $\alpha_i \alpha_{2d+1-i} = q$. For a subset $I \subseteq \{1,\ldots, 2d\}$ we set $I^c = \{1,\ldots,2d\} \setminus I$, and
$\ov{I} = \{i \ ; \ 2d+1 - i \in I\}$, and moreover we use the multiindex notation $\alpha^I = \prod_{i \in I} \alpha_i$. Then with summation over subsets of
$\{1,\ldots,2d\}$ we compute
\begin{align*}
(-1)^{d+r} a_{d-r} = \sum_{|I| = d+r} \alpha^I  & = \big(\prod_{i=1}^{2d} \alpha_i \big) \cdot  \sum_{|I| = d+r}  \frac{1}{\alpha^{I^c}} \\
& = q^{r} \cdot  \sum_{|J| = d - r}  \frac{q^{d-r}}{\alpha^{J}} = q^r \cdot \sum_{|J| = d - r}  \alpha^{\ov{J}} = q^r (-1)^{d-r} a_{d+r},
\end{align*}
and this proves the lemma.
\end{proof}

We next construct a symmetric polynomial $h_\pi(F, V)\in\bZ[F, V]$. 
The idea is to consider the rational function $P_\pi(F)/F^d \in \bZ[F, q/F]$ (at least when $d\in\bZ$), and then formally set $V=q/F$.

\begin{defi}
We define the \textbf{symmetric polynomial} $h_\pi(F,V)$ attached to a Weil $q$--number $\pi$ as follows:
\begin{enumerate} 
\item
If $\pi$ is a non-real Weil $q$--number, then we set  in $\bZ[F, V]$
\[
h_\pi(F,V) = F^{d} + a_{2d-1}F^{d-1} + \ldots + a_{d+1}F+a_d + a_{d+1}V + \ldots + a_{2d-1}V^{d-1} +  V^{d}.
\]
\item
If $\pi = \pm p^m \sqrt{p}$ is real but not rational, then we set 
\[
h_\pi(F,V) = F - V \in \bZ[F,V].
\]
 \item
If $\pi  = \pm p^m$ is rational,  then we set
\[
h_{\pm p^m}(F,V) =  F^{1/2} \mp V^{1/2} \in \bZ[F^{1/2}, V^{1/2}].
\]
\end{enumerate}
\end{defi}

The polynomial $h_w(F,V)$ just defined appears already in \cite{howe:ppav}, \S 9.

\begin{lem} \label{lem:vanishinghpi} 
\begin{enumerate}
\item 
If $\pi$ is a non-real Weil $q$--number, then we have
$h_\pi(\pi, q/\pi)=0$.
\item 
If $\pi$ is a real, but not rational Weil $q$-number, then $h_\pi(F,V) = F - V$ and $h_\pi(\pi,q/\pi) = 0$.
\item
If $\pi  = \pm p^m$ is rational, then 
$h_{p^m}(F,V) \cdot h_{-p^m}(F,V) = F - V $
is again contained in $\bZ[F,V]$, and vanishes for $F= \pi$ and $V = q/\pi$.
\end{enumerate}
\end{lem}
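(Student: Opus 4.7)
The plan is to verify each of the three cases directly using the definition of $h_\pi(F,V)$ and, in the non-real case, the symmetry from Lemma~\ref{lem:integers}.

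For part (1), the idea is that $h_\pi(F, V)$ is essentially the symmetrized form of the rational function $P_\pi(F)/F^d$, where one uses the coefficient symmetry $a_{d-r} = q^r a_{d+r}$ to turn the negative powers of $F$ into positive powers of $V = q/F$. Concretely, I would start from $P_\pi(\pi) = 0$, divide by $\pi^d$, and obtain
\[
0 = \pi^d + a_{2d-1}\pi^{d-1} + \ldots + a_{d+1}\pi + a_d + a_{d-1}\pi^{-1} + \ldots + a_1\pi^{-(d-1)} + a_0\pi^{-d}.
\]
For each $r = 1, \ldots, d$, Lemma~\ref{lem:integers} gives $a_{d-r}\pi^{-r} = a_{d+r} q^r \pi^{-r} = a_{d+r} (q/\pi)^r$ (using $a_{2d} = 1$ for $r=d$). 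Substituting these into the displayed equation above shows that the right-hand side equals exactly $h_\pi(\pi, q/\pi)$, which is therefore zero.

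For part (2), the hypothesis $\pi = \pm p^m\sqrt{p}$ (with $e$ odd) gives $\pi^2 = q$, so $q/\pi = \pi$, and hence $h_\pi(\pi, q/\pi) = \pi - \pi = 0$ from the definition $h_\pi(F,V) = F - V$.

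For part (3), with $\pi = \pm p^m$ rational (so $e$ even and $\pi^2 = q$), the definition gives $h_{p^m}(F,V) = F^{1/2} - V^{1/2}$ and $h_{-p^m}(F,V) = F^{1/2} + V^{1/2}$. Their product $(F^{1/2}-V^{1/2})(F^{1/2}+V^{1/2}) = F - V$ lies in $\bZ[F,V]$, and evaluating at $F = \pi$, $V = q/\pi = \pi$ yields zero. There is no real obstacle here; the only mildly subtle point is unwinding the symmetry in part (1) to recognize $P_\pi(\pi)/\pi^d$ as the value $h_\pi(\pi, q/\pi)$, which is precisely what the definition of $h_\pi$ is designed to encode.
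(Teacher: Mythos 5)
Your proposal is correct and follows exactly the paper's approach: the paper's proof of (1) is the single line $h_\pi(\pi,q/\pi)=P_\pi(\pi)/\pi^d=0$ invoking Lemma~\ref{lem:integers}, which is precisely the computation you spell out, and it dismisses (2) and (3) as trivial, which your direct evaluations ($q/\pi=\pi$ since $\pi^2=q$ in both cases) confirm.
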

\begin{proof}
Assertion (1) follows from $h_\pi(\pi,q/\pi) = P_\pi(\pi)/\pi^d = 0$ which is based on Lemma~\ref{lem:integers}. Assertion (2) and (3) are trivial.
\end{proof}

\begin{defi}
An \textbf{ordinary} Weil $q$--number is a Weil $q$--number $\pi$ such that exactly half of the roots of its minimal polynomial $P_\pi(x)$ in an algebraic
closure of $\bQ_p$ are $p$-adic  units. Equivalently, the isogeny class of abelian varieties over $\bF_q$ associated to $\pi$ by Honda--Tate theory is ordinary.
\end{defi}

If $\pi$ is ordinary then $\bQ(\pi)$ is not real, and precisely half of the roots of the even degree polynomial $P_\pi(x)$ are $p$-adic units.

\begin{lem}
\label{lem:criterionordinary}
Let $w \subseteq W_q$ be a finite subset of non-real conjugacy classes of Weil $q$--numbers. 
Then $w$ consists of ordinary conjugacy classes, if and only if $h_w(0,0)$ is not divisible by $p$.
\end{lem}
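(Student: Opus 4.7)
The plan is to reduce the lemma, via the multiplicative structure of $h_w$, to a statement about a single Weil $q$-number, and then to analyze the $p$-adic Newton polygon of its minimal polynomial.

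First I would unpack the definition of $h_w$: consistently with the description of $\Spec(R_w)$ as the glueing of the $\Spec(R_\pi)$, the polynomial $h_w$ is constructed as the product
\[
h_w(F,V) = \prod_{\pi \in w} h_\pi(F,V).
\]
Since each $\pi \in w$ is non-real, every factor $h_\pi$ falls under case (1) of the definition, with constant term equal to the middle coefficient $a_d(\pi)$ of $P_\pi(x)$. Hence
\[
h_w(0,0) = \prod_{\pi \in w} a_d(\pi),
\]
and $p \nmid h_w(0,0)$ is equivalent to $p \nmid a_d(\pi)$ for every $\pi \in w$. The lemma reduces to the single-class assertion: a non-real Weil $q$-number $\pi$ is ordinary if and only if $p \nmid a_d(\pi)$.

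Next I would analyze the $p$-adic valuations of the roots. Fix an embedding $\ov{\bQ} \hookrightarrow \ov{\bQ}_p$ and let $\alpha_1, \ldots, \alpha_{2d}$ be the roots of $P_\pi(x)$ in $\ov{\bQ}_p$. Because $\pi$ is non-real, the CM involution $\alpha \mapsto q/\alpha$ acts freely and pairs roots with $p$-adic valuations summing to $e = v_p(q)$. Writing $u$ for the number of roots with $v_p = 0$, the same $u$ equals the number of roots with $v_p = e$, while the remaining $2d - 2u$ roots have valuations strictly between $0$ and $e$. By definition, $\pi$ is ordinary precisely when $u = d$.

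Finally, observe that $a_d = (-1)^d \, e_d(\alpha_1, \ldots, \alpha_{2d})$ is, up to sign, the symmetric sum
\[
\sum_{|I|=d}\ \prod_{i \in I} \alpha_i.
\]
In the ordinary case $u = d$, the unique subset of $d$ unit roots contributes a term of valuation $0$, while every other $d$-subset must contain at least one root of strictly positive valuation; hence $v_p(a_d) = 0$. In the non-ordinary case $u < d$, every $d$-subset is forced to contain a non-unit root, so every summand has $v_p \geq 1$ and $p \mid a_d$. Combined with the reduction step, this completes the proof. The argument presents no serious obstacle; the only point to watch is that the non-unit roots really do have \emph{strictly} positive $p$-adic valuation, which is immediate from the definition, and that in the ordinary case there is a \emph{unique} $d$-subset made entirely of unit roots.
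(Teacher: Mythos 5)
Your proof is correct and reaches the same $p$-adic valuation criterion by a slightly different computation than the paper. The paper works inside $\bZ[F,V]/(FV-q)$: it establishes the congruence
\[
h_w(F,V)\equiv\prod_{i=1}^{D}\bigl(F-(\alpha_i+q/\alpha_i)+V\bigr)\pmod{FV-q},
\]
evaluates at $(0,0)$ to get $h_w(0,0)\equiv(-1)^{D}\prod_i(\alpha_i+q/\alpha_i)\pmod p$, and then reduces to the observation that each trace $\alpha_i+q/\alpha_i$ is a $p$-adic unit if and only if exactly one of $\alpha_i,\,q/\alpha_i$ is a unit. You instead read off $h_\pi(0,0)=a_d(\pi)$ directly from the definition of $h_\pi$, so $h_w(0,0)=\prod_{\pi\in w}a_d(\pi)$ is an exact identity, and then analyze $a_d=(-1)^d e_d(\alpha_1,\ldots,\alpha_{2d})$ through the valuation structure of the roots. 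Both proofs rest on the same pairing $\alpha\mapsto q/\alpha$ and the fact that ordinary means exactly $d$ unit roots; yours avoids the ring $\bZ[F,V]/(FV-q)$ and the factorization lemma entirely and isolates the single-class statement cleanly, while the paper's factor-by-factor analysis of $\prod_i(\alpha_i+q/\alpha_i)$ sidesteps the combinatorics of $d$-element subsets. You correctly flag the crucial uniqueness of the all-unit $d$-subset in the ordinary case, which is exactly what rules out cancellation.

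One small imprecision to fix: in the non-ordinary case you assert that every summand $\alpha^I$ has $v_p\geq 1$. Since the roots $\alpha_i$ are algebraic integers, their $p$-adic valuations may be fractional, so one can only conclude $v_p(\alpha^I)>0$. This is enough: the ultrametric inequality gives $v_p(a_d)>0$, and since $a_d\in\bZ$ its valuation is a nonnegative integer, whence $p\mid a_d$. The conclusion stands, but the intermediate bound should read $v_p>0$ rather than $v_p\geq 1$.
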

\begin{proof}
Let $\alpha_1,\ldots,\alpha_d, q/\alpha_1,\ldots, q/\alpha_d$ be the roots of $\prod_{\pi \in w} P_\pi(x)$. Then 
\[
h_w(F,V) \equiv  \prod_{i = 1}^d (F  - (\alpha_i + q/\alpha_i) + V)  \mod (FV - q)
\]
so that 
\[
h_w(F,V) \equiv (-1)^d  \prod_{i = 1}^d (\alpha_i + q/\alpha_i)  \mod p.
\]
This integer is not divisible by $p$ if and only if for all $i$ the algebraic integers $\alpha_i + q/\alpha_i$ are $p$-adic units. This happens if and only if for all $i$ either $\alpha_i$ or $q/\alpha_i$ are $p$-adic units, hence if $w$ consists of ordinary conjugacy classes.
\end{proof}

\subsection{Structure of the minimal central orders}
\label{sec:structureminimalcentral}
In what follows we will define for a finite subset $w \subseteq W_q$ the degree of $w$ by 
\[
\deg(w) = \Rk_\bZ(R_w) = \sum_{\pi \in w} [\bQ(\pi):\bQ].
\]
So $w$ is of even degree if and only if $w$ either contains none or both rational Weil $q$--numbers $\pm q^{e/2}$,
which only show up when $e=[\bF_q:\bF_p]$ is even. Extending this notion, we will say that an arbitrary subset $W \subseteq W_q$ is of even degree if either none or both rational conjugacy classes of Weil $q$-numbers belong to $W$.

If $w\subseteq W_q$ is any finite subset we set 
\[
h_w(F, V) = \prod_{\pi \in w} h_{\pi}(F,V),
\]
which is contained in $\bZ[F,V]$ as soon as $w$ is of even degree.

\begin{thm} 
\label{thm:gorenstein}
Let $w \subseteq W_q$  be a finite set of Weil $q$--numbers of even degree.
\begin{enumera}
\item 
\label{thmitem:structure}
We have $R_w = \bZ[F,V]/(FV - q, h_w(F, V))$.
\item 
\label{thmitem:Gorensteinconditiongeneralpiandq}
The ring $R_w$ is a $1$-dimensional complete intersection, in particular it is a Gorenstein ring.
\end{enumera}
\end{thm}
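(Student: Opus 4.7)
The plan is to exploit the presentation $A := \bZ[F,V]/(FV-q) \simeq \bZ[T, F]/(F^2 - TF + q)$ with $T = F + V$, which realizes $A$ as a free $\bZ[T]$-module of rank $2$ on the basis $\{1, F\}$. This reduces the whole problem to linear algebra over the polynomial ring $\bZ[T]$.

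The first step is a structural analysis of $h_\pi$. Using the Newton-type recursion $p_n := F^n + V^n = T p_{n-1} - q p_{n-2}$ together with the coefficient symmetry of Lemma~\ref{lem:integers}, I would show that for a non-real Weil $q$-number $\pi$ the symmetric polynomial $h_\pi$ lies in $\bZ[T]$ and is monic of degree $d_\pi$. Combining this with the explicit formulas in the real and rational cases, and observing that the even-degree assumption on $w$ forces the ``real contribution'' to produce either nothing or a single factor $F - V = 2F - T$, I obtain a decomposition
\[
h_w(F,V) = g(T) \cdot (2F - T)^{\epsilon} \quad \text{in } A,
\]
with $\epsilon \in \{0,1\}$ and $g(T) \in \bZ[T]$ monic of degree $D - \epsilon$, where $D = \deg(w)/2$.

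The heart of (i) is to show that $S := A/(h_w)$ is $\bZ$-free of rank $\deg(w) = 2D$; the surjection $S \twoheadrightarrow R_w$ then automatically becomes an isomorphism, since $R_w$ embeds in $\prod_\pi \bQ(\pi)$ and is $\bZ$-torsion-free of the same rank $\deg(w)$ by \eqref{eq:rationalRw}. The case $\epsilon = 0$ is immediate: $S = (\bZ[T]/g) \otimes_{\bZ[T]} A$ is free of rank $2$ over the $\bZ$-free ring $\bZ[T]/g$ of rank $D$. I expect the main obstacle to be the case $\epsilon = 1$. Here I would express the ideal $h_w \cdot A$ in the $\bZ[T]$-basis $\{1, F\}$ using the matrix $M = g N$ with $N = \bigl(\begin{smallmatrix}-T & -2q \\ 2 & T\end{smallmatrix}\bigr)$, producing a short exact sequence of $\bZ[T]$-modules
\[
0 \to (\bZ[T]/g)^2 \to S \to \operatorname{coker}(N) \to 0.
\]
A direct relation analysis shows that $\operatorname{coker}(N)$ is $\bZ$-free of rank $2$ on the classes of $e_1, e_2$, with $T$ acting via $\bigl(\begin{smallmatrix}0 & 2q \\ 2 & 0\end{smallmatrix}\bigr)$; since both outer terms are $\bZ$-free, so is $S$, and the rank adds up to $2(D-1)+2 = 2D$.

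With (i) established, part (ii) is essentially formal. Since $\bZ[F,V]$ is regular of Krull dimension $3$ and $R_w = \bZ[F,V]/(FV - q, h_w)$ is finite over $\bZ$ hence of Krull dimension $1$, the ideal $(FV - q, h_w)$ has height $2$; being generated by $2$ elements in the Cohen--Macaulay ring $\bZ[F,V]$, these elements form a regular sequence, so $R_w$ is a complete intersection. Complete intersections are Gorenstein, which concludes the proof.
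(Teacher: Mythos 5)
Your proof is correct, and the core computation is genuinely different from the paper's. For part~(i) the paper stays in the $(F,V)$-coordinates, observes from the form of the factors that $h_w(F,V)=f(F)\pm g(V)$ with $f,g$ monic of degree $D$, and then simply exhibits $F^D,\ldots,F,1,V,\ldots,V^{D-1}$ as a spanning set of size $2D$ for $S=\bZ[F,V]/(FV-q,h_w)$; comparing against the surjection $S\twoheadrightarrow R_w$ with $\Rk_\bZ R_w=2D$ finishes. You instead pass to the coordinates $(T,F)$ with $T=F+V$, use that $A=\bZ[F,V]/(FV-q)$ is free of rank $2$ over $\bZ[T]$, factor $h_w=g(T)\cdot(2F-T)^\epsilon$ with $g$ monic of degree $D-\epsilon$ (this is the same structural fact the paper encodes as $f(F)\pm g(V)$), and compute $S$ as the cokernel of a $2\times 2$ matrix over $\bZ[T]$. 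The exact sequence
\[
0\longrightarrow(\bZ[T]/g)^2\longrightarrow S\longrightarrow\coker(N)\longrightarrow 0
\]
is valid (it comes from the factorization $gN=N\circ g$ with both factors injective since $\det N=-(T^2-4q)\neq 0$ and $g$ monic), and your direct verification that $\coker(N)$ is $\bZ$-free of rank $2$ with $T$ acting by $\left(\begin{smallmatrix}0&2q\\2&0\end{smallmatrix}\right)$ is correct. Your route directly establishes that $S$ is $\bZ$-free of rank $2D$, whereas the paper only needs generation by $2D$ elements and uses the comparison with the free module $R_w$ to force both freeness and the isomorphism; your version buys a slightly more transparent structural picture (you literally see $S$ as an extension of free modules over $\bZ[T]/g$), at the cost of a bit more module-theoretic machinery. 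For part~(ii) both arguments amount to checking that $(FV-q,h_w)$ is a regular sequence in the Cohen--Macaulay ring $\bZ[F,V]$; the paper phrases it as $h_w$ being a non-zerodivisor in the domain $\bZ[F,V]/(FV-q)$, while you invoke the height-two plus Cohen--Macaulay argument. When writing this up, make the height computation explicit (e.g.\ a height-$\le 1$ prime containing $(FV-q,h_w)$ would have to equal the prime $(FV-q)$ since $\bZ[F,V]$ is a UFD, contradicting $h_w\not\in(FV-q)$), and note the degenerate case $w=\emptyset$ separately since then $h_w=1$.
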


\begin{proof}
The ring $R_w$ is reduced as it injects into a product of number fields. Moreover, $R_w$ is a finite $\bZ$-algebra, because it is generated by $F$ and $V$ that satisfy integral relations in $R_w$. Thus $R_w$ is free of finite rank as a $\bZ$-module and of Krull dimension $1$. More precisely, by \eqref{eq:rationalRw} we have 
\[
\Rk_\bZ(R_w) = \sum_{\pi \in w} [\bQ(\pi):\bQ] =: 2D
\]

The ring $\bZ[F,V]/(FV - q)$ is a normal ring with at most one rational singularity in $\fp = (F,V,p)$. Hence, $h_w(F, V)$ is a non-zero divisor in $\bZ[F,V]/(FV-q)$ and it remains to 
show~\ref{thmitem:structure} to conclude the proof of~\ref{thmitem:Gorensteinconditiongeneralpiandq}.

We now show assertion~\ref{thmitem:structure}. By Lemma~\ref{lem:vanishinghpi} the evaluation of  $h_w(F,V)$  in $R_\pi$ vanishes for all $\pi \in w$. Hence we obtain a surjection
\[
\ph \ : \  S= \bZ[F,V]/(FV - q,h_w(F, V)) \surj R_w.
\]
We are done if we can show that $S$ is generated by $2D$ elements as a $\bZ$-module. 

By construction, $h_w(F,V)$ is a product of polynomials of the form 
\[
f_\pi(F) + g_\pi(V)
\]
with $f_\pi,g_\pi \in \bZ[X]$ monic (or $-g_\pi$ monic). The degrees are  $\deg(f_\pi)  = \deg(g_\pi) = [\bQ(\pi):\bQ]/2$ if $\pi$ is non-rational, and $1$ if $\pi$ is rational.
Having a representative of the form $f(F)+g(V)$ for monic polynomials $f,g$ (or $-g$) of the same degree is preserved under taking products:
\[
\big(f_1(F) +g_1(V)\big) \big(f_2(F) +g_2(V)\big) = f_1f_2(F) + g_1g_2(V) + \text{lower degree terms in $F,V$,}
\]
where the mixed terms are of lower degree, because $FV = q$ necessarily leads to cancellations.

Hence the same holds for the product: $h_w(F,V) = f(V) + g(V)$ with $\deg(f) = \deg(g) = D$. In particular 
\[
F^D, F^{D-1}, \ldots, F, 1, V, \ldots, V^{D-1}
\]
generate $S$ as a $\bZ$-module.
\end{proof}

Part \ref{thmitem:structure} of Theorem~\ref{thm:gorenstein} has already been observed by Howe, at least for $w$ ordinary (cf. \cite{howe:ppav}, Prop. 9.1).
On the other hand the Gorensteinness of $R_w$, so crucial in the present work, seems to have remained unnoticed so far.

Since Theorem~\ref{MainThm} deals with abelian varieties over $\bF_p$, our main concern in this paper are the commutative algebra properties of $R_w$
for finite subsets of $W_p$. Here Theorem~\ref{thm:gorenstein} covers all cases. In order to complete the picture we answer what happens if $w \subseteq W_q$
contains exactly one rational conjugacy class of Weil $q$--numbers.

\begin{thm}
Let $q$ be the square of a positive or negative  integer $\sqrt{q} \in \bZ$. Let $v \subseteq W_q$ be a finite set containing no rational conjugacy class, and set
$w = v \cup \{\sqrt{q}\}$. Then the following holds.
\begin{enumera}
\item 
\label{thmitem:structuregeneral}
We have $R_w = \bZ[F,V]/(FV - q, h_v(F,V)(F -\sqrt{q}), h_v(F,V)(V-\sqrt{q}))$.
\item 
\label{thmitem:Gorensteinconditiongeneralpiandqgeneral}
The ring $R_w$ is Gorenstein if and only if all conjugacy classes of Weil $q$-numbers in $v$ are ordinary.
\end{enumera}
\end{thm}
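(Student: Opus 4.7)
The plan for \ref{thmitem:structuregeneral} is to mimic the proof of Theorem~\ref{thm:gorenstein}. First I would verify that $h_v(F,V)(F-\sqrt{q})$ and $h_v(F,V)(V-\sqrt{q})$ both vanish in $R_w$: in each component $R_\pi$ with $\pi \in v$ the factor $h_v$ vanishes (since $h_\pi$ divides $h_v$ and $h_\pi(\pi, q/\pi)=0$ by Lemma~\ref{lem:vanishinghpi}), while in the component $R_{\sqrt{q}} = \bZ$ the linear factors $F-\sqrt{q}$ and $V-\sqrt{q}$ vanish. This produces a surjection $S := \bZ[F,V]/(FV - q, h_v(F-\sqrt{q}), h_v(V-\sqrt{q})) \twoheadrightarrow R_w$. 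Writing $h_v(F,V) = f(F) + g(V)$ with $\deg f = \deg g = D$, where $2D = \Rk_\bZ(R_v)$, exactly as in Theorem~\ref{thm:gorenstein}, and expanding the first relation as $Ff(F) = \sqrt q(f(F) + g(V)) - Fg(V)$ while using $FV = q$ to reduce $Fg(V)$ to a combination of $F$ and $V^0, \ldots, V^{D-1}$, one sees that this relation expresses $F^{D+1}$ and inductively all higher powers of $F$ as $\bZ$-combinations of $\{1, F, \ldots, F^D, V, \ldots, V^D\}$; the second relation does the same for $V^{D+1}$ and higher. Hence $S$ is generated as a $\bZ$-module by $2D+1$ elements, and since $\Rk_\bZ R_w = 2D + 1$ the surjection must be an isomorphism.

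For \ref{thmitem:Gorensteinconditiongeneralpiandqgeneral} I would check Gorensteinness locally at each maximal ideal of $R_w$, exploiting $\Spec R_w = \Spec R_v \cup \Spec \bZ$. At maximal ideals above a prime $\ell \neq p$: $\sqrt q$ is an $\ell$-adic unit, and the identity $F(V-\sqrt q) = -\sqrt q(F-\sqrt q)$ modulo $FV - q$ makes the two defining relations proportional after inverting $F$, so $I_w$ is locally principal and $R_w$ is a hypersurface in a regular local ring, hence Gorenstein. At maximal ideals above $p$ lying only on a single component, the localization of $R_w$ equals either a localization of $R_v$ (Gorenstein by Theorem~\ref{thm:gorenstein}) or $\bZ_{(p)}$, again Gorenstein. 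The only remaining case is an intersection maximal ideal $\fm$ above $p$. Since $\sqrt q = \pm p^m$ with $m \geq 1$ gives $\sqrt q \equiv 0 \pmod p$, such $\fm$ forces $\pi, q/\pi \in \mathfrak p_v$ for some $\pi \in v$ and some prime $\mathfrak p_v$ of $R_v$ above $p$, i.e., $\pi$ is non-ordinary at $\mathfrak p_v$. By Lemma~\ref{lem:criterionordinary} (applicable because the evenness of $e = [\bF_q:\bF_p]$ forces $v$ to contain no real conjugacy classes) the existence of such a non-ordinary $\pi$ is equivalent to $h_v(0,0) \equiv 0 \pmod p$, which characterises $v$ as containing a non-ordinary element. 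Therefore, when every $\pi \in v$ is ordinary, no intersection above $p$ exists and $R_w$ is Gorenstein everywhere.

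The hard part will be the converse: assuming $v$ contains a non-ordinary conjugacy class, show that $R_w$ fails to be Gorenstein at the corresponding intersection $\fm$ above $p$. The plan is a socle computation in the Artinian quotient $R_{w,\fm}/(p)$. Modulo $p$ the three defining relations read $\bar F \bar V = 0$, $\bar h_v \bar F = 0$, $\bar h_v \bar V = 0$ (using $\sqrt q \equiv 0 \pmod p$), and non-ordinarity puts $\bar h_v$ into the maximal ideal by Lemma~\ref{lem:criterionordinary}. Viewing $R_{w,\fm}$ as the fibre product of $R_{v, \mathfrak p_v}$ with $\bZ_{(p)}$ over their common Artinian quotient $R_0$, the conductor criterion $\length(\widetilde{R}_{w,\fm}/R_{w,\fm}) = \length(R_{w,\fm}/\mathfrak c)$ for Gorensteinness in dimension $1$ fails by a strict inequality $>$, because $R_v = \bZ[\pi, q/\pi]$ is strictly smaller than its normalization $\cO_{\bQ(\pi)}$ locally at the non-ordinary prime $\mathfrak p_v$, producing a second contribution to $\length(\widetilde{R}_{w,\fm}/R_{w,\fm})$ beyond the one coming from $R_0$. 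Equivalently, $\bar F$ and $\bar V$ span a two-dimensional subspace of the socle of $R_{w,\fm}/(p)$, in contrast to the one-dimensional socle required by Gorensteinness. The main technical obstacle is establishing this strict local non-maximality of $\bZ[\pi, q/\pi]$ at every non-ordinary prime $\mathfrak p_v$; this reflects the failure of $\pi$ to generate simultaneously a uniformizer and the residue field extension of $\mathfrak p_v$ over $p$, and is the essential mechanism by which non-ordinarity obstructs Gorensteinness.
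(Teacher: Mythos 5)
Your argument for part~\ref{thmitem:structuregeneral} is essentially the paper's: verify the relations vanish on each component, get a surjection $S \surj R_w$, and count that $S$ is generated as a $\bZ$-module by $F^D, \ldots, F, 1, V, \ldots, V^D$, matching $\Rk_\bZ R_w = 2D+1$. Your expansion of $h_v(F,V)(F-\sqrt{q})$ to exhibit the rewriting of $F^{D+1}$ in lower degrees is just a more explicit version of the paper's ``it is easy to see.''

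For part~\ref{thmitem:Gorensteinconditiongeneralpiandqgeneral} the overall strategy (localize away from $(p,F,V)$ where the ring is a complete intersection, then do a socle computation at the exceptional point) agrees with the paper, but two of your intermediate claims do not hold up. First, $\bar F$ and $\bar V$ are not in the socle of $R_w/pR_w$ unless $D=1$: one has $F\cdot\bar F=\bar F^2\ne 0$ in general. Writing $h_v(F,V)=h(F)-h(0)+h(V)$ for a degree $D$ polynomial $h$, one computes directly in the basis $1,F,\ldots,F^D,V,\ldots,V^D$ that $\ker(\cdot F)\cap\ker(\cdot V)$ is spanned by $h(F)$ and $h(V)$ when $p\mid h(0)$; these coincide if and only if $p\nmid h(0)=h_v(0,0)$, and Lemma~\ref{lem:criterionordinary} then gives the ordinarity criterion. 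Second, your alternate plan via the conductor criterion rests on the claim that $R_v$ (or $R_\pi$ at a non-ordinary prime) is strictly non-maximal, which is both unnecessary and false in general: $R_v$ is always Gorenstein by Theorem~\ref{thm:gorenstein}, and the failure of Gorensteinness here is a phenomenon at the junction of $\Spec R_v$ with $\Spec\bZ$ inside $\Spec(\bZ[F,V]/(FV-q))$, not an internal defect of $R_v$. Non-ordinarity does not force $\bZ[\pi,q/\pi]$ to be non-maximal at $\mathfrak p_v$, so this branch of your argument would not close.

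The paper sidesteps all of this by exploiting flatness: since $R_w$ is flat over the regular ring $\bZ$, Matsumura's Theorem~23.4 reduces Gorensteinness of $(R_w)_{(p,F,V)}$ to Gorensteinness of the Artinian fibre $R_w/pR_w \simeq \bF_p[F,V]/(FV, h(F)F, h(V)V)$ at its origin, where the socle can be read off in the explicit monomial basis. You may want to adopt that reduction: it removes the need to discuss maximal ideals above primes $\ell\ne p$ or on single components separately, and makes the socle computation elementary and self-contained.
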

\begin{proof}
Reasoning as in Lemma~\ref{lem:vanishinghpi}, we see that the defining quotient map $\bZ[F,V]/(FV-q) \to R_w$ factors as a surjective map 
\[
S = \bZ[F,V]/(FV - q, h_v(F,V)(F -\sqrt{q}), h_v(F,V)(V-\sqrt{q})) \surj R_w.
\]
As in Theorem~\ref{thm:gorenstein} the ring $R_w$ as a $\bZ$-module is free of rank 
\[
\Rk_\bZ(R_w) = 1 + \sum_{\pi \in v} [\bQ(\pi):\bQ] =: 2D + 1.
\]
It is easy to see that $S$ is generated as a $\bZ$-module by 
\[
F^D, F^{D-1}, \ldots, F, 1, V, \ldots, V^{D}.
\]
This shows assertion \ref{thmitem:structuregeneral} as above.

For assertion \ref{thmitem:Gorensteinconditiongeneralpiandqgeneral} we first note that after inverting one of the elements $p$, $F$ or $V$ the three relations can be reduced to two relations, so that outside of $(p,F,V)$ the ring $R_w$ is a local complete intersection and hence Gorenstein.  It remains to discuss the local ring in $\fp = (p,F,V)$. 

There is a unique polynomial $h \in \bZ[X]$ such that
\[
h_v(F,V) = h(F) - h(0) + h(V) \in \bZ[F,V],
\]
and for this $h$ we have $h(0) = h_v(0,0)$. Since $\bZ$ is regular (hence Gorenstein) and $R_w$ is a flat $\bZ$-algebra, it follows from \cite{matsumura:ringtheory}  Theorem~23.4 that $R_w$ is Gorenstein in $\fp$ if and only if 
\[
R_w/p R_w = \bF_p[F,V]/(FV, h(F)F, h(V)V)
\]
is Gorenstein in $\bar{\fp} = (F,V)$. The ring $R_w/pR_w$ is Artinian, hence of dimension $0$, so that by \cite{matsumura:ringtheory}  Theorem~18.1 the ring  $(R_w/pR_w)_{\bar \fp}$ is Gorenstein if and only if
\[
1 =  \dim_{\bF_p} \Hom(\kappa(\bar \fp),R_w/pR_w).
\]
The space of homomorphisms has the same dimension as the socle,  i.e., the maximal submodule annihilated by $(F,V)$.
The socle is the intersection of the kernels of $F$ and $V$ as $\bF_p$-linear maps of $R_w$, that can be easily evaluated in the basis $F^D, F^{D-1}, \ldots, F, 1, V, \ldots, V^{D}$. The  intersection is $1$-dimensional if $p \nmid h(0)$ and it is $2$-dimensional otherwise. Due to Lemma~\ref{lem:criterionordinary} this completes the proof.
\end{proof}

\section{Remarks on reflexive modules}
\label{sec:reflexive}

\subsection{Reflexive versus $\bZ$-free} 

Let $S$ be a noetherian ring. Recall that a finitely generated $S$-module $M$ is \textit{reflexive} (resp.\ \textit{torsion less}) if  the natural map 
\[
M \to \Hom_S(\Hom_S(M,S),S) 
\]
is an isomorphism (resp.\ injective). We denote the category of finitely generated reflexive $S$-modules by 
\[
\Refl(S).
\]

\begin{lem} 
\label{lem:reflexiveZfree}
Let $w \subseteq W_q$ be a finite set of Weil $q$--numbers such that $R_w$ is Gorenstein, 
and let $\ell$ be a prime number. 
Let $M$ be a finitely generated $R_w$-module (resp.\ $R_w \otimes \bZ_\ell$-module). The following are equivalent:
\begin{enumeral}
\item \label{lemitem:reflexive}
$M$ is reflexive.
\item \label{lemitem:torsionless}
$M$ is torsion less.
\item \label{lemitem:Zfree}
$M$ is free as a $\bZ$-module (resp.\ $\bZ_\ell$-module).
\end{enumeral}
\end{lem}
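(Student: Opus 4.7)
The plan is to establish the cyclic implications $\ref{lemitem:reflexive} \Rightarrow \ref{lemitem:torsionless} \Rightarrow \ref{lemitem:Zfree} \Rightarrow \ref{lemitem:reflexive}$. I will carry out the argument for $R_w$; the $\ell$-adic variant is identical once one notes that $R_w \otimes \bZ_\ell$ is again a one-dimensional Gorenstein ring, obtained from $R_w$ by the flat base change $\bZ \to \bZ_\ell$ whose fibres (being $\bQ$ and $\bF_\ell$) are regular, hence Gorenstein.

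The implication $\ref{lemitem:reflexive} \Rightarrow \ref{lemitem:torsionless}$ is immediate from the definition, since an isomorphism is injective. For $\ref{lemitem:torsionless} \Rightarrow \ref{lemitem:Zfree}$, the hypothesis means $M$ embeds into $R_w^n$ for some $n$; but $R_w$ is itself $\bZ$-free, being a finitely generated $\bZ$-subalgebra of the product of number fields $\prod_{\pi \in w} \bQ(\pi)$, so $M$ is a submodule of a free $\bZ$-module of finite rank and hence itself free over $\bZ$.

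The substantive step is $\ref{lemitem:Zfree} \Rightarrow \ref{lemitem:reflexive}$. I would first translate $\bZ$-freeness into the stronger statement that no non-zero-divisor of $R_w$ annihilates a nonzero element of $M$. Indeed, by \eqref{eq:rationalRw} the ring $R_w \otimes \bQ = \prod_{\pi \in w} \bQ(\pi)$ is a product of fields, so any non-zero-divisor of $R_w$ becomes a unit in $R_w \otimes \bQ$; since the $\bZ$-free module $M$ embeds into $M \otimes \bQ$, on which such elements act invertibly, $M$ is torsion free as an $R_w$-module. At this point one invokes the classical theorem of Bass that over a one-dimensional Gorenstein ring a finitely generated module is reflexive if and only if it is torsion free (equivalently, maximal Cohen--Macaulay). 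Applied to $R_w$, which is Gorenstein by Theorem~\ref{thm:gorenstein}, this yields reflexivity.

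The only non-formal ingredient is the Bass equivalence ``reflexive $\Leftrightarrow$ torsion free'' for one-dimensional Gorenstein rings, and it is the unique place where the Gorenstein hypothesis genuinely enters. I would expect to cite it from a standard reference rather than to reprove it; the standard argument proceeds by writing $0 \to M \to M^{\vee\vee} \to C \to 0$ and using that $M^{\vee\vee}$ is automatically maximal Cohen--Macaulay over a Gorenstein ring, which forces the cokernel $C$ (supported in codimension zero on a reduced ring) to vanish.
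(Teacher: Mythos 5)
Your proof is correct and follows essentially the same route as the paper's: both hinge on Bass's theorem for one-dimensional Gorenstein rings, with the remaining implications being bookkeeping about $\bZ$-freeness and torsion. The only cosmetic differences are the cyclic decomposition (the paper proves (i)$\Leftrightarrow$(ii) by Bass and then (ii)$\Leftrightarrow$(iii) directly), that you cite Bass in the form ``torsion free $\Leftrightarrow$ reflexive'' whereas the paper cites \cite{Ba}~Theorem~6.2(4) as ``torsion less $\Leftrightarrow$ reflexive'' --- equivalent here since $R_w$ is reduced of dimension one, so torsion free and torsion less coincide for finitely generated modules --- and a minor imprecision in your parenthetical sketch of Bass's argument (the cokernel $C$ is torsion, hence supported at closed points, i.e., of finite length, not ``in codimension zero'').
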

\begin{proof}
Assertions~\ref{lemitem:reflexive} and \ref{lemitem:torsionless} are equivalent by \cite{Ba} Theorem 6.2 (4), since $R_{w}$ is Gorenstein and of dimension $1$.

For a uniform treatment, we set $S = R_w \otimes \Lambda$ with $\Lambda = \bZ$ (resp.\ $\Lambda = \bZ_\ell$). Since $S$ is finite flat over $\Lambda$, the dual module
$\Hom_S(M,S)$ is free as a $\Lambda$-module. The same holds for every submodule of $\Hom_S(M,S)$ which shows
assertion~\ref{lemitem:torsionless} implies \ref{lemitem:Zfree}. 

For the converse direction we introduce the total ring of fractions $S \subset K = S \otimes_\bZ \bQ$, which is a product of fields. 
Therefore, assuming~\ref{lemitem:Zfree}, the composite map 
\[
M \to M \otimes_\bZ \bQ = M \otimes_S K \to \Hom_S(\Hom_K(M \otimes_S K,K),K)
\]
is injective.  And since it factors over the natural map $M \to \Hom_S(\Hom_S(M,S),S)$, the latter is also injective and hence $M$ torsion less. This completes the proof.
\end{proof}

\subsection{The main theorem with reflexive modules}
\label{sec:targetcategory}

Let $w \subseteq W_q$ be a finite set of conjugacy classes of Weil $q$--numbers of even degree (see~\S\ref{sec:structureminimalcentral}), so that, in particular, $R_w$ is Gorenstein (see Theorem~\ref{thm:gorenstein}). For
an object $M$ of $\Refl(R_w)$, let $(M_0,F_M)$ be the pair consisting of the $\bZ$--module $M_0$ underlying $M$ and of the linear map $F_M:M_0\to M_0$ given by the action of $F_w\in R_w$ on $M$.

\begin{prop}\label{RwModTF}
The functor $M\mapsto (M_0, F_M)$ gives
an equivalence between $\Refl(R_w)$ and the category of pairs $(T, F)$ consisting of a finite, free $\bZ$-module $T$, and of an endomorphism
$F:T\to T$ satisfying the following conditions:
\begin{enumer}
\item $F\otimes\bQ$ is semi-simple with eigenvalues given by Weil $q$--numbers in $w$;
\item there exists $V:T\to T$ such that $FV=q$.
\end{enumer}
A morphism between two such pairs $(T,F)$ and $(T', F')$ is a linear map $f:T\to T'$ such that
$f F=F'f$.
\end{prop}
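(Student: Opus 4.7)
The plan is to turn everything into commutative algebra by exploiting the explicit presentation $R_w = \bZ[F,V]/(FV-q, h_w(F,V))$ from Theorem~\ref{thm:gorenstein}\ref{thmitem:structure}, together with the identification of reflexive $R_w$-modules with $\bZ$-free $R_w$-modules from Lemma~\ref{lem:reflexiveZfree}. Under this dictionary, the data of a reflexive $R_w$-module is the same as a finite free $\bZ$-module $T$ together with two commuting endomorphisms $F_T, V_T : T \to T$ satisfying $F_T V_T = q$ and $h_w(F_T, V_T) = 0$. The functor in question is the forgetful functor that discards $V_T$, so everything comes down to reconstructing $V_T$ from $F_T$ and checking the relation $h_w(F_T, V_T) = 0$.

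\emph{Essential surjectivity and uniqueness of $V$.} Let $(T,F)$ satisfy (i) and (ii). The hypothesis that the eigenvalues of $F \otimes \bQ$ are Weil $q$-numbers forces $F \otimes \bQ$ to be invertible, since Weil $q$-numbers are nonzero. Condition (ii) provides some $V:T\to T$ with $FV = q$, and any two such choices agree after $\otimes\bQ$ because there $V = qF^{-1}$ is forced. Since $T$ is $\bZ$-free it injects into $T \otimes \bQ$, so $V$ is uniquely determined by $F$. For the relation, semisimplicity of $F \otimes \bQ$ gives a decomposition $T \otimes \bQ = \bigoplus_{\pi \in w} V_\pi$ into eigenspaces (grouped by conjugacy class), with $F$ acting as $\pi$ and $V$ as $q/\pi$ on $V_\pi$. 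By Lemma~\ref{lem:vanishinghpi} we have $h_\pi(\pi, q/\pi) = 0$, so the factor $h_\pi(F,V)$ already kills $V_\pi$; hence the product $h_w(F,V)$ kills all of $T \otimes \bQ$, and thus vanishes on $T$ by $\bZ$-freeness. This equips $T$ with an $R_w$-module structure reflecting $(T,F)$.

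\emph{Checking the essential image in the other direction.} Let $M$ be a reflexive $R_w$-module. Condition (ii) is automatic with $V = V_w$ acting on $M$. For (i), recall from \eqref{eq:rationalRw} that $R_w \otimes \bQ = \prod_{\pi \in w} \bQ(\pi)$ is a finite product of fields; hence $M \otimes \bQ$ is a semisimple $R_w \otimes \bQ$-module, decomposing as $\bigoplus_{\pi \in w} M_\pi$ where $M_\pi$ is a $\bQ(\pi)$-vector space on which $F_w$ acts by multiplication by $\pi$. Consequently $F \otimes \bQ$ is semisimple with eigenvalues the Galois conjugates of the various $\pi \in w$, which are Weil $q$-numbers in $w$.

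\emph{Full faithfulness.} A morphism of $R_w$-modules between $\bZ$-free objects is a $\bZ$-linear map commuting with the actions of both $F$ and $V$. But if $f:T \to T'$ commutes with $F$, then after tensoring with $\bQ$ it commutes with $V = q F^{-1}$, and since $T'$ is $\bZ$-free (hence torsion-free) $f$ already commutes with $V$ on the integral level. Thus morphisms in $\Refl(R_w)$ correspond exactly to $\bZ$-linear maps with $fF = F'f$, as required. The main (minor) subtlety is ensuring that the $V$ reconstructed on a pair $(T,F)$ is integral and that the relation $h_w(F,V)=0$ automatically holds; both reduce to injectivity of $T \hookrightarrow T \otimes \bQ$ combined with the decomposition into $\pi$-isotypic components.
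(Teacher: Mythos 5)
Your proof is correct and follows essentially the same route as the paper's: translate reflexive $R_w$-modules into $\bZ$-free modules via Lemma~\ref{lem:reflexiveZfree}, invoke the presentation $R_w = \bZ[F,V]/(FV-q, h_w(F,V))$ and the identity $F^d h_w(F,q/F) = \prod_\pi P_\pi(F)$ / Lemma~\ref{lem:vanishinghpi} to relate the relation $h_w$ to semisimplicity with Weil-number eigenvalues, and observe that $V = qF^{-1}$ over $\bQ$ makes $V$ (and commutation with $V$) automatic on torsion-free modules. You spell out explicitly the essential surjectivity and full faithfulness that the paper's proof leaves as ``follows easily,'' and these details are correct (the phrase ``$F$ acting as $\pi$ on $V_\pi$'' is a mild abuse---really $V_\pi$ is a $\bQ(\pi)$-module with $\pi$ acting as $F$---but the intended argument is sound).
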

\begin{proof} Thanks to Lemma~\ref{lem:reflexiveZfree}, an $R_w$--module belongs to $\Refl(R_w)$ if and only if it is finite and free as a $\bZ$--module.
Moreover, the linear map $F_M:M_0\to M_0$ satisfies in the ring $\End_{\bZ}(M_0)$ the polynomial
\[
F^d \cdot h_w(F,q/F) = \prod_{\pi \in w} P_\pi(F),
\]
which is square free. Therefore $F_M\otimes\bQ$ is semi-simple with eigenvalues given by Weil $q$--numbers whose conjugacy classes
belong to $w$. The map $V_M:M_0\to M_0$ induced by the action of $V_w\in R_w$ on $M$ satisfies $V_MF_M=q$. Essential surjectivity
of the functor follows easily from Lemma~\ref{lem:reflexiveZfree}.
\end{proof}

Let now $v \subseteq w$ be a finite subset which is also of even degree. By Lemma~\ref{lem:reflexiveZfree}, the natural projection $\pr_{v,w} : R_{w} \to R_{v}$ gives a fully faithful embedding 
\[
\Refl(R_v) \subseteq \Refl(R_{w})
\]
by means of which $\Refl(R_v)$ can be regarded as the full subcategory whose objects are those for which the $R_{w}$--action factors over
$\pr_{v,w} : R_{w} \to R_{v}$. Using the description of Proposition~\ref{RwModTF}, we easily see that an object $M$ of $\Refl(R_w)$ lies in $\Refl(R_v)$ if and only if
the eigenvalues of $F_M\otimes\bQ:M_0\otimes\bQ\to M_0\otimes\bQ$ define conjugacy classes of Weil $q$--numbers in $v$.
\smallskip

\begin{defi} Let $W \subseteq W_q$ be a subset of even degree, and $\dR_W = (R_w)$ be the pro--ring with $w$ ranging over all finite subsets of $W$ of even degree. The category
\[
\Refl(\dR_W) := \varinjlim_{w \subseteq W} \Refl(R_{w})
\]
is the full subcategory of the category of $\bZ[F,V]$--modules given by all $M$ such that:

\begin{enumerate}

\item there exists $w_M\subseteq W$ such that the structural action of $\bZ[F,V]$ on $M$ factors through
$\bZ[F,V]\to R_{w_M}$ (and hence through $\bZ[F,V]\to R_w$ for all $w$ containing $w_M$);

\item\label{condition2} for any finite  $w\subseteq W$ of even degree containing $w_M$ the module $M$ is reflexive as an $R_w$--module.
\end{enumerate}
\end{defi}
Notice that condition \eqref{condition2} is equivalent to asking that $M$ be a reflexive module over $R_w$ for some $w\subseteq W$  of even degree such that the action of $R_w$ on $M$ is defined (see Lemma \ref{lem:reflexiveZfree}).

\begin{rmk}\label{complin} For any finite $w\subseteq W$ of even degree, the category $\Refl(\dR_W)$ contains the $R_w$--linear category $\Refl(R_{w})$ as a full subcategory.
Moreover, if $v\subseteq w$ are finite subsets of $W$ of even degree, then the $R_{v}$--linear structure on $\Refl(R_v)$ induced from the fully faithful embedding
$\Refl(R_v)\subseteq \Refl(R_{w})$ is compatible, via the surjection $\pr_{v,w}:R_{w}\to R_v$, with the natural $R_w$--linear structure. Formally we are
in a situation analogous to that described in Remark \ref{rmk:RwstructureviaFrobenius}, where the category $\AV_W$ played the role of $\Refl(\dR_W)$.
We will then say that $\Refl(\dR_W)$ has a $\dR_W$--linear structure.
\end{rmk}

The category $\Refl(\dR_W)$ can be given a concrete description in terms of pairs $(T, F)$
given by a finite free $\bZ$-module $T$ and a linear map $F:T\to T$ such that
\begin{enumer}
\item $F\otimes\bQ$ is semi-simple and its eigenvalues are Weil $q$-numbers in $W$;
\item there exists $V:T\to T$ with $FV=q$.
\end{enumer}
The notion of morphism between two such pairs is clear. This can be seen reasoning as in Proposition~\ref{RwModTF},
and using the compatibility of linear structures described in Remark~\ref{complin}.

\smallskip

Denote now the set $W_p\setminus \{\sqrt{p}\}$ of non--real conjugacy classes of Weil $p$-numbers simply by $W_p^\com$, and the corresponding
pro-ring $\dR_{W_p^\com}$ by $\dR_p^\com$. Theorem~\ref{MainThm} then claims the existence of a contravariant, $\dR_{W_p^\com}$-linear,
ind-representable equivalence 
\[
T: \AV_p^\com \to \Refl(\dR_p^\com),
\]
such that $T(A)$ is a lattice of rank $2 \dim(A)$. By definition, the $\dR_{W_p^\com}$-linearity of $T(-)$ is the requirement that for any finite
$w\subseteq W_p^\com$ the restriction of $T$ to $\AV_w$ has values in $\Refl(R_w)$ and is $R_w$--linear. These conditions amounts precisely to the equality
$F  = T(\pi_A)$, for all $A$ in $\AV_p^\com$.

\subsection{Further remarks}

The following piece of homological algebra is used later.

\begin{lem} \label{lem:vanishingext1}
Let $S$ be a $1$-dimensional Gorenstein ring. For any  finitely generated reflexive $S$-module $M$, we have  
\[
\Ext^1_S(M,S) = 0.
\]
\end{lem}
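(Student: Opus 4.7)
The plan is to reduce the vanishing of $\Ext^1_S(M,S)$ to the local case and then exploit the fact that a reflexive module over a one-dimensional Gorenstein local ring is maximal Cohen--Macaulay.

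Since $M$ is finitely generated and $S$ is Noetherian, the formation of $\Ext^1_S(M,S)$ commutes with localization at primes. It therefore suffices to prove $\Ext^1_{S_\fp}(M_\fp, S_\fp) = 0$ for every $\fp \in \Spec(S)$. Localization preserves reflexivity of finitely presented modules (both $\Hom$ and double-dualization commute with localization), and $S_\fp$ remains Gorenstein of Krull dimension $\leq 1$. If $\dim(S_\fp) = 0$, then $S_\fp$ is Artinian Gorenstein, hence self-injective, and the vanishing is immediate.

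One is thus reduced to the case of a Gorenstein local ring $(S,\fm)$ of dimension $1$ with $M$ finitely generated and reflexive. Here $M$ embeds into a finite free $S$-module: choosing a surjection $S^n \surj M^*$ and applying $\Hom_S(-,S)$ produces $M = M^{**} \hookrightarrow (S^n)^* = S^n$. Since $S$ itself has depth one, any non-zero-divisor $x \in \fm$ of $S$ remains a non-zero-divisor on $M$, which gives $\operatorname{depth}_S(M) \geq 1$, i.e., $M$ is maximal Cohen--Macaulay.

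I would then conclude by the standard fact that, over a Gorenstein local ring of dimension $d$, a finitely generated module $M$ satisfies $\Ext^i_S(M,S) = 0$ for all $i > 0$ if and only if $M$ is maximal Cohen--Macaulay. For $d = 1$ this is transparent via local duality, which identifies $\Ext^1_S(M,S)$ with the Matlis dual of the local cohomology module $H^0_\fm(M)$; the latter vanishes precisely because $\operatorname{depth}_S(M) \geq 1$. There is no real technical obstacle here: the argument merely packages the Gorenstein hypothesis (that $S$ is its own canonical module) together with the observation that a reflexive module embeds into a free module.
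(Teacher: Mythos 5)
Your proof is correct, and it hinges on the same key observation as the paper's — namely that reflexivity of $M$ furnishes an embedding $M \hookrightarrow S^n$ into a finite free module, obtained by dualizing a presentation of $\Hom_S(M,S)$. Where the two proofs diverge is in how this embedding is exploited. The paper simply forms the short exact sequence $0 \to M \to S^n \to M' \to 0$ and applies the long exact $\Ext$-sequence: since a $1$-dimensional Gorenstein ring has injective dimension $1$ (this is essentially Bass's characterization, and is what \cite{Ba}~\S1 is cited for), the term $\Ext^2_S(M',S)$ vanishes, forcing $\Ext^1_S(M,S) = 0$. You instead localize, observe that the embedding makes $M_\fp$ maximal Cohen--Macaulay over $S_\fp$, and conclude by invoking local duality — $\Ext^1_{S_\fp}(M_\fp,S_\fp)$ is Matlis dual to $H^0_{\fm}(M_\fp)$, which vanishes because $\operatorname{depth}(M_\fp) \geq 1$. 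Both routes are sound. The paper's argument is leaner: it never leaves the global setting and only needs the long exact sequence plus the injective-dimension bound. Yours carries a bit more machinery (localization, the MCM/$\Ext$-vanishing dictionary, local duality) but in exchange is a template that scales cleanly to higher-dimensional Gorenstein rings, where ``reflexive'' would have to be replaced by the MCM condition anyway. In short: same decisive input, different packaging.
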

\begin{proof}
We use a free presentation of the dual $\Hom_S(M,S)$ and dualize again. This yields an embedding of $M$ into a free $S$-module and then a short exact sequence
\[
0 \to M \to S^n \to M' \to 0.
\]
The $\Ext$-sequence, and the fact that $S$ has injective dimension $1$, cf.~\cite{Ba} \S1, yields
\[
0 = \Ext^1_S(S^n,S) \to \Ext^1_S(M,S) \to \Ext^2_S(M',S) = 0
\]
from which the lemma follows.
\end{proof}

Finally, here is a criterion for invertible reflexive modules in terms of their endomorphism algebra.

\begin{prop} \label{prop:EndCriterionForInvertible}
Let $S$ be a reduced Gorenstein ring of dimension at most $1$, and let $M$ be a reflexive module. Then the following are equivalent.
\begin{enumerate}
\item[(a)] $M$ is locally free of rank $1$.
\item[(b)] The natural map $S \to \End_S(M)$ is an isomorphism.
\end{enumerate}
\end{prop}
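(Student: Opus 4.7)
The implication (a) $\Rightarrow$ (b) is immediate by localization: if $M$ is locally free of rank $1$, then for each prime $\fp$ of $S$, one has $\End_{S_\fp}(M_\fp) = S_\fp$, and the natural map $S \to \End_S(M)$ is an isomorphism after localization at every prime, hence an isomorphism. For the converse, the plan is to show $M_\fp$ is free of rank $1$ over $S_\fp$ for every prime $\fp$. Since reflexivity and the property of $S \to \End_S(M)$ being an isomorphism both localize (the latter for finitely generated modules), I may reduce to the case where $(S,\fm)$ is local.

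First I would pin down the generic rank. For a minimal prime $\fp$, the ring $S_\fp$ is a field (as $S$ is reduced), and $M_\fp$ is a finite-dimensional $S_\fp$-vector space satisfying $\End_{S_\fp}(M_\fp) = S_\fp$. This forces $\dim_{S_\fp} M_\fp = 1$. Hence $M \otimes_S K \cong K$, where $K$ is the total ring of fractions of $S$. Since $M$ is reflexive it is torsion-less, in particular torsion-free, so it injects into $M \otimes_S K$. Fixing such an embedding realizes $M$ as a fractional ideal $I \subseteq K$, and the endomorphism hypothesis translates to $(I : I) = S$. If $\dim S = 0$ the above vector-space calculation already gives the conclusion, so from now on I assume $\dim S = 1$.

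The key Gorenstein input is the identity $(S : I^{-1}) = I^{**} = I$, valid because $I$ is reflexive (here $I^{-1} = (S : I) = \Hom_S(I, S)$, using the generic rank one computation also for $I^{-1}$). A direct manipulation then gives
\[
(S : I \cdot I^{-1}) = (I : I) = S.
\]
Suppose for contradiction that $I \cdot I^{-1} \subsetneq S$. Then $I \cdot I^{-1} \subseteq \fm$, and hence $(S : I \cdot I^{-1}) \supseteq (S : \fm)$. But a crucial property of a $1$-dimensional Gorenstein local ring is that $(S : \fm) \supsetneq S$: for a non-zero-divisor $x \in \fm$, this reflects the $1$-dimensional socle of the Artinian Gorenstein quotient $S/xS$, which yields an element of $(S : \fm)$ outside $S$. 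This contradicts $(S : I \cdot I^{-1}) = S$, so $I \cdot I^{-1} = S$, meaning $I$ is invertible, hence principal over the local ring $S$. Therefore $M \cong I$ is free of rank $1$, completing the proof.

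The main obstacle is the Gorenstein-specific ingredient $(S : \fm) \supsetneq S$, which is precisely the property that makes the argument work over Gorenstein rings and breaks in the general Cohen--Macaulay setting; combined with the reflexivity identity $(S : I^{-1}) = I$, it turns the endomorphism condition into genuine invertibility.
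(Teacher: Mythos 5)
Your proof is correct, but it takes a genuinely different route from the paper's. The paper passes to the completion, invokes Bass's structure theorem (\cite{Ba}, Prop.~7.2) to extract a non-zero projective direct summand $M_0$ from $M$, and then forces the complementary summand to vanish by comparing endomorphism rings; Gorenstein enters via the cited result of Bass. You instead realize $M$ as a fractional ideal $I$ in the total ring of fractions (pinning down the generic rank via the endomorphism condition at minimal primes), use reflexivity in the form $(S:I^{-1})=I$ to obtain $(S:II^{-1})=(I:I)=S$, and then rule out $II^{-1}\subseteq\fm$ by the non-triviality of $(S:\fm)$, concluding $I$ is invertible, hence principal over the local ring. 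Both arguments are sound; yours is more elementary in that it avoids Bass's structure theorem and amounts to the classical invertibility criterion for fractional ideals.

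One small correction to your closing commentary: the ingredient $(S:\fm)\supsetneq S$ is not Gorenstein-specific. It holds for any one-dimensional Cohen--Macaulay local ring, being just the non-vanishing of the socle of the Artinian quotient $S/xS$ for a non-zero-divisor $x\in\fm$ (and here reducedness with $\dim S=1$ already gives depth $1$, hence this applies). The genuinely Gorenstein-specific fact is that this socle is one-dimensional, i.e., that $(S:\fm)/S$ is cyclic, but your argument only uses that it is non-zero. Consequently your proof does not actually use the Gorenstein hypothesis at all --- it establishes the proposition under the weaker assumptions "$S$ reduced, Noetherian, $\dim S\leq 1$, $M$ reflexive." The Gorenstein hypothesis is, of course, what makes reflexivity equivalent to torsion-freeness and is used heavily elsewhere in the paper, but it plays no role in your argument for this particular implication.
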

\begin{proof}
If $M$ is locally free of rank $1$, then $\End_S(M) \simeq M^\vee \otimes M \simeq S$ where $M^\vee = \Hom_S(M,S)$ and (b) holds. 

For the converse, we may assume that $S$ is a complete local ring by passing to the completion.  Since $\End_S(M) = S$ we have $M \not= 0$, and  moreover, $M$ cannot be a module (extending the $S$-module structure) for a strictly larger subring of the total ring of fractions of $S$. Now \cite{Ba} Proposition~7.2 shows that  $M$ has a non-zero projective direct summand $M_0$. With $M = M_0 \oplus M_1$ we find 
\[
S \times \End_S(M_1) =  \End_S(M_0) \times \End_S(M_1)  \subseteq  \End_S(M) = S
\]
and therefore $\End_S(M_1) = 0$. This forces $M_1 = 0$ and $M$ is projective. Then $\End_S(M)$ is projective of rank the square of the rank of $M$ (as a locally constant function on $\Spec(S)$). Therefore $M$ is of rank $1$ and the proof is complete.
\end{proof}

\section{Abelian varieties with minimal endomorphism algebra}
\label{TheFunctorTonAVpi}

Before restricting to the case $q=p$, we recall the following classical result of Tate (cf. \cite{Ta2} \S1 for $\ell \not= p$, \cite{WM} including $\ell=p$: Theorem~6, also \cite{CCO}~\S A.1)
which will be used frequently. For $A$ an abelian variety over $\bF_q$ and $\ell$ a prime number, denote by $A[\ell^\infty]$ the $\ell$-divisible group corresponding to $A$.

\begin{thm}[Tate]
\label{TateThm} Let $A, B$ be abelian varieties over $\bF_q$, and $\ell$ a prime number. The natural map $f\mapsto f[\ell^\infty]$ induces an isomorphism
\[
\Hom_{\bF_q}(A,B)\otimes\bZ_\ell\stackrel{\sim}{\longrightarrow}\Hom(A[\ell^\infty],B[\ell^\infty]).
\]
\end{thm}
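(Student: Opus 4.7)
The plan is to prove injectivity and surjectivity separately, following Tate's classical strategy for $\ell \neq p$ and its extension via Dieudonn\'e theory (due to Waterhouse--Milne) for $\ell = p$.

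First I would reduce to the case $A = B$: using the direct-sum decomposition
\[
\End_{\bF_q}(A \times B) = \End_{\bF_q}(A) \oplus \End_{\bF_q}(B) \oplus \Hom_{\bF_q}(A,B) \oplus \Hom_{\bF_q}(B,A),
\]
induced by the idempotents from the product decomposition, together with the compatible decomposition of $(A \times B)[\ell^\infty]$, it suffices to prove that
\[
\End_{\bF_q}(A) \otimes \bZ_\ell \longrightarrow \End\bigl(A[\ell^\infty]\bigr)
\]
is an isomorphism. Injectivity is straightforward: $\Hom_{\bF_q}(A,B)$ is a finitely generated free $\bZ$-module, and any $f$ mapping to $0$ satisfies $A[\ell^n] \subseteq \ker(f)$ for every $n$, hence factors as $f = \ell^n g_n$ for some $g_n \in \Hom_{\bF_q}(A,B)$, forcing $f \in \bigcap_n \ell^n \Hom_{\bF_q}(A,B) = 0$. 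Since $\bZ \to \bZ_\ell$ is flat, injectivity is preserved after tensoring.

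For surjectivity when $\ell \neq p$, the key input is Tate's finiteness lemma: up to $\bF_q$-isomorphism there are only finitely many abelian varieties in the $\bF_q$-isogeny class of $A$, which follows from the existence of polarizations of bounded degree together with the finiteness of $\bF_q$-points on moduli of polarized abelian varieties. From this one deduces that every $\Gal(\overline{\bF}_q/\bF_q)$-stable $\bQ_\ell$-subspace of $V_\ell(A) = T_\ell(A) \otimes_{\bZ_\ell} \bQ_\ell$ is of the form $V_\ell(B)$ for some abelian subvariety $B \subseteq A$ up to isogeny. Given a Galois-equivariant $u \in \End(V_\ell(A))$, the graph of $u$ inside $V_\ell(A \times A)$ then produces an abelian subvariety of $A \times A$ whose projections to the two factors are isogenies, hence a quasi-endomorphism of $A$ realizing $u$; an integrality argument finally recovers $\End_{\bF_q}(A) \otimes \bZ_\ell$ inside $\End(T_\ell(A))$ from its $\bQ_\ell$-span. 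The Zarhin trick, replacing $A$ by $(A \times A^t)^4$, supplies the principal polarization needed to run the graph argument cleanly.

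The main obstacle is surjectivity in the case $\ell = p$. Here one loses the Galois action, and must replace the \'etale Tate module by the (covariant or contravariant) Dieudonn\'e module over the Witt ring $W(\bF_q)$. Since $\bF_q$ is perfect of characteristic $p$, Dieudonn\'e theory provides an equivalence between $p$-divisible groups over $\bF_q$ and suitable finite free $W(\bF_q)$-modules equipped with $F$ and $V$, and the same strategy can be re-run with $F,V$-stable $W(\bF_q) \otimes \bQ$-subspaces of the rational Dieudonn\'e module playing the role of Galois-stable subspaces of $V_\ell(A)$; the finiteness input on isomorphism classes in an isogeny class is the same as before. This execution is technically heavier, and is carried out in detail in \cite{WM}~Theorem~6 and \cite{CCO}~\S A.1, which the paper cites as authorities.
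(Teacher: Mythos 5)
The paper itself does not reprove Theorem~\ref{TateThm}; it states it and cites \cite{Ta2}~\S1 for $\ell\neq p$, \cite{WM}~Theorem~6 for $\ell=p$, and \cite{CCO}~\S A.1. Your proposal is a faithful summary of precisely those sources, so in substance you are reconstructing the proof the paper relies on rather than offering a different route.

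Two small points are worth tightening. First, the injectivity step: the divisibility observation (a morphism killing $A[\ell^n]$ is divisible by $\ell^n$) is exactly the right input, but the conclusion should not be drawn by ``tensoring an injection with the flat module $\bZ_\ell$.'' The target $\Hom(A[\ell^\infty],B[\ell^\infty])$ is already a $\bZ_\ell$-module and is not obtained from a $\bZ$-module by base change, so flatness of $\bZ\to\bZ_\ell$ is not the mechanism. The correct packaging is to pass to finite level: the divisibility argument gives injectivity of $\Hom_{\bF_q}(A,B)/\ell^n\to\Hom(A[\ell^n],B[\ell^n])$ for each $n$, and since $\Hom_{\bF_q}(A,B)$ is a finitely generated $\bZ$-module its $\ell$-adic completion equals $\Hom_{\bF_q}(A,B)\otimes\bZ_\ell$; taking $\varprojlim_n$ of the level-$n$ injections then yields injectivity of the completed map. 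Second, the appeal to Zarhin's trick is not what Tate's original 1966 argument uses: over a finite field Tate works directly with an arbitrary polarization and a bilinear-forms/Goursat-lemma argument, so the trick (which postdates that paper) is at most a convenience, not a needed input. Otherwise the sketch of surjectivity---finiteness of isomorphism classes in an isogeny class, identification of Galois-stable subspaces of $V_\ell(A)$ with Tate modules of abelian subvarieties up to isogeny, the graph construction, and the Dieudonn\'e-theoretic replacement for $\ell=p$ as in \cite{WM} and \cite{CCO}~\S A.1---is accurate and matches the paper's intended references.
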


As is well known, the isomorphism of Tate's theorem takes a more concrete form as follows.
If $\ell\neq p$,
it can be formulated in terms of Galois representations, and says that the functor $\ell$-adic Tate-module $T_\ell(-)$ induces an isomorphism
\[
\Hom_{\bF_q}(A,B)\otimes\bZ_\ell\stackrel{\sim}{\longrightarrow}\Hom_{\bZ_\ell[{\rm Gal}_{\bF_q}]}(T_\ell(A),T_\ell(B)).
\]

If $\ell=p$, using the language of Dieudonn\'e modules, Tate's theorem translates into the fact that the functor contravariant Dieudonn\'e-module $T_p(-)$ induces an isomorphism
\[
\Hom_{\bF_q}(A,B)\otimes\bZ_p\stackrel{\sim}{\longrightarrow}\Hom_{\mathcal{D}_{\bF_q}}(T_p(B),T_p(A)),
\]
where $\mathcal{D}_{\bF_q}$ is the Dieudonn\'e ring of $\bF_q$.

\begin{rmk}\label{localRwStructure}
For any prime $\ell$ the $R_w$-linear structure on the category $\AV_w$ defined in Section~\S\ref{sec:defineRw} induces an enrichment of the functor
$T_\ell(-)$ to left $R_w\otimes\bZ_\ell$-modules for $\ell\neq p$, and to right\footnote{We employ the contravariant Dieudonn\'e theory, therefore the left $R_w$-module
structure of the $\Hom$-groups in $\AV_w$ turns into a right $R_w\otimes\bZ_p$-modules structure on the corresponding Dieudonn\'e modules. However $R_w$ is commutative,
hence for $A$ in $\AV_w$ we can safely treat $T_p(A)$ as a left $R_w\otimes\bZ_p$-module.}
$R_w\otimes\bZ_p$-modules for $\ell=p$.

For any $A\in\AV_w$, and any
$\ell\neq p$, the action of the arithmetic Frobenius of $\bF_q$ on $T_\ell(A)$ agrees with the action of $F_w\otimes 1\in R_w\otimes\bZ_\ell$, and we have
a natural identification
\[
\Hom_{\bZ_\ell[{\rm Gal}_{\bF_p}]}(T_\ell(A),T_\ell(B))=\Hom_{R_w\otimes\bZ_\ell}(T_\ell(A),T_\ell(B))\text{ for $\ell\neq p$},
\]
for all $A,B\in\AV_w$. In the special case where $q=p$, and only in this case, the Dieudonn\'e ring $\dD_{\bF_q}$ is commutative, hence the theory of
Dieudonn\'e modules of abelian varieties over the prime field $\bF_p$ does not involve semi-linearity aspects. For any $A\in\AV_w$ the action of
$\dD_{\bF_p}$ on $T_p(A)$ factors through the quotient $\dD_{\bF_p} \surj R_w \otimes \bZ_p$, and Tate's theorem reads as
\[
\Hom_{\mathcal{D}_{\bF_p}}(T_p(A), T_p(B))=\Hom_{R_{w}\otimes\bZ_p}(T_p(B), T_p(A)),
\]
for all $A,B\in\AV_w$. So that, roughly, the Dieudonn\'e theory of abelian varieties over the prime field is analogous to the
theory of Tate modules at primes $\ell\neq p$, up to replacing variancy with covariancy.
\end{rmk}

For any $\pi\in W_p$, we choose  a simple abelian variety $B_\pi$ over $\bF_p$ whose associated Weil 
$p$--number represents $\pi$.

\begin{prop}
\label{ConstructionApi} 
Let $w \subseteq W_p$ be a finite set of conjugacy classes of Weil $p$--numbers not containing $\sqrt p$.
There exists an abelian variety $A_w$ over $\bF_p$ isogenous to $\prod_{\pi \in w} B_{\pi}$ and such that $T_\ell(A_{w})$ is free of rank one over $R_{w} \otimes \bZ_\ell$, for all primes $\ell$.
Furthermore, for any such $A_w$, the natural map 
\[
R_w \to  \End_{\bF_p}(A_{w})
\]
is an isomorphism.
\end{prop}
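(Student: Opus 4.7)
The plan is to construct $A_w$ by modifying the naive product $A_0 = \prod_{\pi \in w} B_\pi$ via an $\bF_p$-isogeny so that its Tate and Dieudonn\'e modules become free of rank one over $R_w \otimes \bZ_\ell$ at every prime $\ell$, and then to deduce the endomorphism statement from Tate's Theorem~\ref{TateThm}. Since $\sqrt{p} \notin w$, Waterhouse's theorem gives $\End_{\bF_p}(B_\pi) \otimes \bQ = \bQ(\pi)$ for each $\pi$, so
\[
\End_{\bF_p}(A_0) \otimes \bQ = \prod_{\pi \in w} \bQ(\pi) = R_w \otimes \bQ
\]
and $R_w$ embeds in $\End_{\bF_p}(A_0)$ as a subring of finite index.

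First I would verify that $V_\ell(A_0)$ is free of rank one over $R_w \otimes \bQ_\ell$ for every prime $\ell$ (including $\ell = p$, with $V_p$ the rational Dieudonn\'e module). The $R_w \otimes \bQ_\ell$-action is faithful, and the dimension count $\dim_{\bQ_\ell} V_\ell(A_0) = 2\dim(A_0) = \sum_{\pi \in w} [\bQ(\pi):\bQ] = \dim_{\bQ_\ell}(R_w \otimes \bQ_\ell)$, together with the fact that $R_w \otimes \bQ_\ell$ is a product of fields, forces freeness of rank one. Next I would produce integral lattices $L_\ell \subseteq V_\ell(A_0)$ that are $R_w \otimes \bZ_\ell$-free of rank one and stable under Frobenius (and, at $p$, also under Verschiebung). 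For all $\ell$ outside the finite set of primes where $R_w$ is non-maximal, $R_w \otimes \bZ_\ell$ is a product of DVRs, so any torsion-free rank-one module over it is a product of principal fractional ideals, hence free; in particular $T_\ell(A_0)$ itself serves at these $\ell$. At the remaining finitely many primes (possibly including $p$), I would pick a generator $v_\ell \in V_\ell(A_0)$ of the free $R_w \otimes \bQ_\ell$-module and set $L_\ell = (R_w \otimes \bZ_\ell) \cdot v_\ell$, which is automatically free of rank one and automatically $F$- and $V$-stable since $F, V \in R_w$.

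The main technical step is to realise the collection $(L_\ell)_\ell$ as $(T_\ell(A_w))_\ell$ for an abelian variety $A_w$ in the isogeny class of $A_0$: this uses the equivalence underlying Tate's theorem between abelian varieties up to $\bF_p$-isogeny with compatible lattice data and actual abelian varieties. Concretely, after rescaling each $v_\ell$ by a common integer $N$ so that the $L_\ell$'s are commensurable with $T_\ell(A_0)$ and differ only at finitely many primes, the discrepancy is encoded in a single finite subgroup scheme $G \subseteq A_0$, and one takes $A_w = A_0/G$. For the endomorphism assertion applied to any such $A_w$, injectivity of $R_w \to \End_{\bF_p}(A_w)$ follows from $R_w \otimes \bQ = \End_{\bF_p}(A_w) \otimes \bQ$; for surjectivity I would check locally at each $\ell$, using Tate's theorem to obtain
\[
\End_{\bF_p}(A_w) \otimes \bZ_\ell \simeq \End_{R_w \otimes \bZ_\ell}(T_\ell(A_w)) = R_w \otimes \bZ_\ell,
\]
the last equality because $T_\ell(A_w) = L_\ell$ is free of rank one. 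The identification with the $R_w \otimes \bZ_\ell$-endomorphism ring uses, for $\ell \neq p$, that $V = pF^{-1}$ is a polynomial in $F$ (the constant term of the minimal polynomial of $F$ is a power of $p$, a unit in $\bZ_\ell$), giving $R_w \otimes \bZ_\ell = \bZ_\ell[F]$; and for $\ell = p$ the commutativity of $\dD_{\bF_p}$ together with the fact that its action on $T_p(A_w)$ factors through $R_w \otimes \bZ_p$.
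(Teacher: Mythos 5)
Your proposal is correct and follows essentially the same route as the paper's proof: establish rank-one freeness of $V_\ell$ over $R_w\otimes\bQ_\ell$ by faithfulness and a dimension count, note that $T_\ell(A_0)$ is already free at the primes where $R_w\otimes\bZ_\ell$ is maximal, choose a free $R_w\otimes\bZ_\ell$-lattice at the finitely many remaining primes, realize the change of lattice by an $\bF_p$-isogeny (permissible because the lattice is $F$- and $V$-stable), and deduce the endomorphism claim from Tate's theorem. The only genuine deviation is presentational: you start from the specific product $\prod_{\pi\in w}B_\pi$ and package the modifications into a single quotient $A_0/G$, whereas the paper takes any $B$ in the isogeny class and performs a finite sequence of isogenies one prime at a time. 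One small imprecision worth flagging: the phrase ``rescaling each $v_\ell$ by a common integer $N$'' glosses over the fact that the required containment flips between $\ell\neq p$ (one needs $L_\ell\supseteq T_\ell(A_0)$ so that $L_\ell/T_\ell(A_0)$ is a finite subgroup to quotient by) and $\ell=p$ (one needs $L_p\subseteq T_p(A_0)$, because the contravariant Dieudonn\'e functor sends a quotient isogeny to an inclusion of lattices); so the generators $v_\ell$ should simply be chosen prime by prime with the appropriate scaling, rather than by a single global $N$. This does not affect the validity of the argument.
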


\begin{rmk}
In the case where $w$ consists of just one Weil $p$--number, the abelian variety $A_{w}$ as in Proposition~\ref{ConstructionApi}  was already considered by
Waterhouse (cf. \cite{Wa} Theorem~6.1). We observe that the product $\prod_{\pi\in w} A_{\{\pi\}}$ of the varieties constructed for each singleton $\{\pi\}\subset w$
may well fail to serve as the $A_w$ satisfying the properties of Proposition~\ref{ConstructionApi}. This failure is explained by a phenomenon analogous
to congruences between Weil $q$-numbers, discussed in example~\ref{ex:congruences}.
\end{rmk}

\begin{proof} Let $B$ be any abelian variety over $\bF_p$ isogenous to $\prod_{\pi \in w} B_{\pi}$. For any $\pi\in W_p$ with
$\pi\not\sim\sqrt p$, it is straightforward to verify using Honda--Tate theory (cf. \cite{Ta}, Th\'eor\`eme 1~ii)) that:

\begin{enumer}
\item all local invariants of the division ring $\End_{\bF_p}^0(B_\pi)$ are trivial;

\item $[\bQ(\pi):\bQ]=2\dim(B_\pi)$.
\end{enumer}
In fact each of these conditions is equivalent to $\End_{\bF_p}(B_\pi)$ being commutative.

Since the abelian varieties $B_\pi$,
for $\pi\in w$, are pairwise non--isogenous, we have that $\End_{\bF_p}(B)$ is also commutative, and isomorphic to an order of the product of CM--fields $\prod_{\pi\in w} \bQ(\pi)$.
We deduce the chain of equalities
\[
\Rk_\bZ(\End_{\bF_p}(B)) 
= \sum_{\pi \in w} [\bQ(\pi):\bQ] = \sum_{\pi \in w} 2 \dim(B_\pi) = 2 \dim(B).
\]

From the injectivity of the isomorphism of Theorem~\ref{TateThm}, and using the language of Dieudonn\'e modules if $\ell=p$, it follows that the action of
$R_{w}\otimes\bQ_\ell=\prod_{\pi \in w} \bQ(\pi)\otimes\bQ_\ell$ on 
\[
V_\ell(B) = T_\ell(B) \otimes_{\bZ_\ell} \bQ_\ell
\]
is faithful. Hence $V_\ell(B)$ has rank one over $\prod_{\pi \in w} \bQ(\pi)\otimes\bQ_\ell$,
since they both have dimension $2\dim(B)$ over $\bQ_\ell$ (notice that $\dim_{\bQ_p}(V_p(B))=2\dim(B)$ because we work over $\bF_p$).

Therefore, for every $\ell$,  we can choose an $R_{w}\otimes\bZ_\ell$-lattice 
\[
\Lambda_\ell\subset V_\ell(B)
\]
which is free of rank one, and which contains $T_\ell(B)$ if $\ell\neq p$, and is contained in $T_p(B)$ if $\ell=p$.

If $R_{w}\otimes\bZ_\ell$ is the maximal order of $\prod_{\pi \in w} \bQ(\pi) \otimes \bQ_\ell$, as for almost all $\ell$, then $T_\ell(B)$ is necessarily free of rank one over $R_{w}\otimes\bZ_\ell$ and we take $\Lambda_\ell = T_\ell(B)$.

Now, if $\ell\neq p$ then the finite subgroup 
\[
N_\ell=\Lambda_\ell/T_\ell(B)\subset B[\ell^\infty],
\]
being an  $R_w$-submodule, is stable under Frobenius and hence is defined over $\bF_p$. The corresponding isogeny
$\psi_\ell:B\to B/N_\ell$ induces an identification $\Lambda_\ell\simeq T_\ell(B/N_\ell)$ of $R_{w}\otimes\bZ_\ell$-modules. 

Similarly,
the $p$-power degree isogeny $\psi_p:B\to B/N_p$, where $N_p$ is the $\bF_p$-subgroup-scheme of $B$ corresponding to the Dieudonn\'e module
$T_p(B)/\Lambda_p$, induces an identification $T_p(B/N_p)\simeq\Lambda_p$ of $R_{w}\otimes\bZ_p$-modules. Therefore, after applying a
finite sequence of isogenies to $B$, we obtain the abelian variety $A_{w}$ with the desired property. 

Lastly, by Theorem~\ref{TateThm}, the natural map
\[
R_w \to \End_{\bF_p}(A_w)
\]
is an isomorphism after $-\otimes\bZ_\ell$ for all prime numbers $\ell$, since $T_\ell(A_w) \simeq R_w \otimes \bZ_\ell$. Therefore the last
statement of the proposition follows.
\end{proof}

\begin{rmk} 
One can show that there is a free and transitive action of the Picard group $\Pic(R_{w})$ on the set of
isomorphism classes of abelian varieties $A_{w}$ satisfying the conditions of Proposition~\ref{ConstructionApi} (cf. \cite{Wa} Theorem~6.1.3 for the case of simple abelian varieties, i.e., $w = \{\pi\}$). We will discuss this below in 
Section~\S\ref{sec:choicesofindrepresentingobjects}.
\end{rmk}

The Gorenstein property of $R_{w}$ allows one to give the following useful characterization of the abelian varieties $A_{w}$
satisfying the property of Proposition~\ref{ConstructionApi} (cf. also \cite{serretate:goodreduction} end of \S 4).

\begin{prop}
\label{characterizationApi} 
Let $w \subseteq W_p$ be a finite set of conjugacy classes of Weil $p$--numbers not containing $\sqrt p$, and let $A$ be
an abelian variety over $\bF_p$ isogenous to $\prod_{\pi \in w} B_{\pi}$. The following conditions are equivalent:
\begin{enumeral}
\item\label{characterizationApi1} $T_\ell(A)$ is free of rank one over $R_{w}\otimes\bZ_\ell$, for all primes $\ell$.
\item\label{characterizationApi2} $\End_{\bF_p}(A)$ is equal to the minimal central order $R_{w}$.
\end{enumeral}
\end{prop}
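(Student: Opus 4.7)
The plan is to deduce both implications from Tate's theorem (Theorem~\ref{TateThm}), combined with the Gorenstein property of $R_w$ and the invertibility criterion of Proposition~\ref{prop:EndCriterionForInvertible}. First I would note that because $w \subseteq W_p \setminus \{\sqrt p\}$ consists entirely of non-real conjugacy classes, each $[\bQ(\pi):\bQ]$ is even, so $w$ is of even degree. Hence Theorem~\ref{thm:gorenstein} applies and $R_w$ is a reduced Gorenstein ring of Krull dimension one. These properties are inherited by $R_w \otimes \bZ_\ell$ for every prime $\ell$, which moreover is semi-local as a finite $\bZ_\ell$-algebra.

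For the direction \ref{characterizationApi1}~$\Rightarrow$~\ref{characterizationApi2}, I would combine Tate's theorem with the $R_w$-linear enrichment recalled in Remark~\ref{localRwStructure} to obtain, for each prime $\ell$, a natural isomorphism
\[
\End_{\bF_p}(A) \otimes \bZ_\ell \ \simeq \ \End_{R_w \otimes \bZ_\ell}\bigl(T_\ell(A)\bigr)
\]
(using the commutativity of $\dD_{\bF_p}$ in the case $\ell=p$). If $T_\ell(A)$ is free of rank one over $R_w \otimes \bZ_\ell$, the right-hand side is $R_w \otimes \bZ_\ell$. Thus the finite-index inclusion $R_w \subseteq \End_{\bF_p}(A)$ of $\bZ$-orders becomes an equality after tensoring with every $\bZ_\ell$, hence is already an equality.

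For the converse \ref{characterizationApi2}~$\Rightarrow$~\ref{characterizationApi1}, the same chain of identifications reads $\End_{R_w \otimes \bZ_\ell}(T_\ell(A)) = R_w \otimes \bZ_\ell$ for every $\ell$. Since $T_\ell(A)$ is free as a $\bZ_\ell$-module, Lemma~\ref{lem:reflexiveZfree} makes it reflexive as an $R_w \otimes \bZ_\ell$-module. Proposition~\ref{prop:EndCriterionForInvertible}, applied to the reduced Gorenstein ring $S = R_w \otimes \bZ_\ell$ of dimension at most one, then forces $T_\ell(A)$ to be locally free of rank one. Since $R_w \otimes \bZ_\ell$ is semi-local, finitely generated projectives of constant rank one are free, so $T_\ell(A)$ is free of rank one over $R_w \otimes \bZ_\ell$.

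The main conceptual step is the converse direction: it is precisely the Gorenstein property of $R_w$, invoked through Proposition~\ref{prop:EndCriterionForInvertible}, that promotes the endomorphism identification into actual invertibility of $T_\ell(A)$. Without Gorensteinness, the same reasoning would only yield faithfulness of the $R_w \otimes \bZ_\ell$-action, which is insufficient to conclude rank-one freeness.
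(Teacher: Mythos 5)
Your proof is correct and follows essentially the same route as the paper. The paper handles the direction \ref{characterizationApi1}~$\Rightarrow$~\ref{characterizationApi2} simply by citing Proposition~\ref{ConstructionApi}, whose last paragraph contains exactly the argument you spell out (Tate's theorem plus freeness of $T_\ell(A)$ giving $R_w \otimes \bZ_\ell \xrightarrow{\sim} \End_{\bF_p}(A)\otimes\bZ_\ell$ for all $\ell$); and for \ref{characterizationApi2}~$\Rightarrow$~\ref{characterizationApi1} the paper likewise deduces reflexivity of $T_\ell(A)$ over $R_w\otimes\bZ_\ell$ from the Gorenstein property (via \cite{Ba} Theorem~6.2, the same citation that underlies Lemma~\ref{lem:reflexiveZfree}), applies Proposition~\ref{prop:EndCriterionForInvertible} to get local freeness of rank one, and concludes freeness from triviality of the Picard group of the semi-local finite $\bZ_\ell$-algebra $R_w\otimes\bZ_\ell$. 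Your phrasing of that last step (projectives of constant rank one over a semi-local ring are free) matches the paper's (product of complete local rings, trivial Picard group).
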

\begin{proof} Thanks to Proposition~\ref{ConstructionApi}, we only need to show that \ref{characterizationApi2} implies \ref{characterizationApi1}.
Since $R_w$ is Gorenstein by Theorem~\ref{thm:gorenstein}, also its completion $R_w \otimes \bZ_\ell$ is Gorenstein. It follows from \cite{Ba} Theorem~6.2 that
the torsion free $R_w \otimes \bZ_\ell$-module $T_\ell(A)$ is reflexive. 

By \ref{characterizationApi2} and Theorem~\ref{TateThm} we have $\End_{R_w \otimes \bZ_\ell}(T_\ell(A)) = R_w \otimes \bZ_\ell$, so Proposition~\ref{prop:EndCriterionForInvertible} yields that $T_\ell(A)$ is projective of rank $1$. Since $R_w \otimes \bZ_\ell$ is a finite $\bZ_\ell$-algebra, hence a product 
\[
R_w \otimes \bZ_\ell = \prod_\lambda R_\lambda
\]
of complete local rings  $R_\lambda$, its Picard group is trivial and $T_\ell(A)$ is free of rank $1$ as an $R_w \otimes \bZ_\ell$-module.
\end{proof}

We conclude the section observing that if $A$ is an abelian variety over $\bF_q$, for $q$ arbitrary, the Dieudonn\'e module $T_p(A)$ has rank
$2\dim(A)$ over the Witt vectors $W(\bF_q)$ of $\bF_q$. It follows that the naive analogue of \ref{characterizationApi1} can never be
attained if $q>p$ for rank reasons, and the above proposition is peculiar to the $q=p$ case.

\section{Construction of the anti-equivalence}
\label{sec:anitequivalence}

In this section we give a proof of Theorem~\ref{MainThm}. 
Recall from Remark~\ref{rmk:RwstructureviaFrobenius} that for a subset $W \subseteq W_q$ of conjugacy classes of Weil $q$-numbers, the category  $\AV_W$
is the full subcategory of $\AV_q$ consisting of all abelian varieties $A$ over $\bF_q$  whose support $w(A)$ is contained in $W$.

\subsection{Finite Weil support} 

We begin by defining the lattice $T(A)$ and its endomorphism $F$ on the increasing family of subcategories 
\[
\AV_w \subseteq \AV_p^{\rm com}
\]
for  finite subsets $w \subseteq W_p^\com$. 

Let us then assume that $\sqrt p \notin w$, and pick an abelian variety $A_w$ satisfying the
condition of Proposition~\ref{ConstructionApi} for $w$. For any object $A$ of $\AV_w$ there is a natural $R_w = \End_{\bF_p}(A_w)$-module structure on  
\[
\rM_w(A) := \Hom_{\bF_p}(A, A_w).
\]
This is the same $R_w$-structure as that described in Remark~\ref{rmk:RwstructureviaFrobenius}.

\begin{thm}\label{thm:T(A)onAVpi} 
Let $w \subseteq W_p$ be a finite set of non-real conjugacy classes of Weil $p$--numbers. 
The functor $\rM_w(A)$
induces an anti-equivalence 
\[
\AV_w \to \Refl(R_w).
\]
The $\bZ$-rank of $\rM_w(A)$ is $2\dim(A)$. 
\end{thm}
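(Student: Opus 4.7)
The plan is to exploit $A_w$ as a kind of progenerator, using the identification $\End_{\bF_p}(A_w) = R_w$ together with the local freeness $T_\ell(A_w) \simeq R_w \otimes \bZ_\ell$ provided by Proposition~\ref{ConstructionApi}. The key algebraic input is that $R_w$ is one-dimensional Gorenstein: since $w$ consists of non-real conjugacy classes and $W_p$ contains no rational Weil $p$--numbers (as $e=[\bF_p:\bF_p]=1$), $w$ is automatically of even degree, and Theorem~\ref{thm:gorenstein} applies.

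\textbf{Step 1 (target and rank).} The abelian group $\rM_w(A)$ is finitely generated and torsion free, hence $\bZ$-free; by Lemma~\ref{lem:reflexiveZfree} it therefore lies in $\Refl(R_w)$. For every prime $\ell$, the enriched form of Tate's theorem (Theorem~\ref{TateThm} and Remark~\ref{localRwStructure}) combined with $T_\ell(A_w) \simeq R_w \otimes \bZ_\ell$ gives
\[
\rM_w(A) \otimes \bZ_\ell \simeq \Hom_{R_w \otimes \bZ_\ell}\bigl(T_\ell(A), R_w \otimes \bZ_\ell\bigr).
\]
Since $R_w \otimes \bQ_\ell$ is a product of fields, $R_w \otimes \bQ_\ell$-duality preserves $\bQ_\ell$-dimension, so $\Rk_\bZ \rM_w(A) = \dim_{\bQ_\ell} T_\ell(A) \otimes \bQ_\ell = 2\dim(A)$.

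\textbf{Step 2 (fully faithful).} Both sides of the natural map
\[
\Hom_{\bF_p}(A, B) \longrightarrow \Hom_{R_w}\bigl(\rM_w(B), \rM_w(A)\bigr)
\]
are finitely generated $\bZ$-free, so it suffices to check the map is an isomorphism after $-\otimes \bZ_\ell$ for every prime $\ell$. Setting $S_\ell = R_w \otimes \bZ_\ell$ and $(-)^\vee = \Hom_{S_\ell}(-, S_\ell)$, Tate's theorem identifies the left-hand side with $\Hom_{S_\ell}(T_\ell(A), T_\ell(B))$, while Step~1 applied to $A$ and $B$ identifies the right-hand side with $\Hom_{S_\ell}(T_\ell(B)^\vee, T_\ell(A)^\vee)$. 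Since $S_\ell$ is Gorenstein and $T_\ell(A), T_\ell(B)$ are $\bZ_\ell$-free and hence reflexive over $S_\ell$ (Lemma~\ref{lem:reflexiveZfree}), the functor $(-)^\vee$ is an anti-autoequivalence of $\Refl(S_\ell)$, and the two sides coincide.

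\textbf{Step 3 (essentially surjective).} Given $M \in \Refl(R_w)$, pick a finite free presentation of the dual module $M^\vee = \Hom_{R_w}(M, R_w)$,
\[
R_w^a \xrightarrow{\ \phi\ } R_w^b \to M^\vee \to 0.
\]
Dualizing and using $M^{\vee\vee} \simeq M$ (reflexivity of $M$) yields a left exact sequence $0 \to M \to R_w^b \xrightarrow{\phi^\vee} R_w^a$. Via $\End_{\bF_p}(A_w) = R_w$, the matrix $\phi$ is realized by a homomorphism $\psi: A_w^a \to A_w^b$ of abelian varieties. Let $X = \im(\psi) \subseteq A_w^b$ be the image abelian subvariety, and set $A = A_w^b / X$. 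Then $A$ lies in $\AV_w$ because it is a quotient of $A_w^b$ and all simple factors of $A_w$ have Weil number in $w$. Applying the left exact functor $\Hom_{\bF_p}(-, A_w)$ to $0 \to X \to A_w^b \to A \to 0$, and noting that the surjection $A_w^a \surj X$ induces an injection $\Hom_{\bF_p}(X, A_w) \inj R_w^a$, we obtain
\[
\rM_w(A) = \ker\!\bigl(R_w^b \to \Hom_{\bF_p}(X, A_w)\bigr) = \ker(\phi^\vee) \simeq M.
\]

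\textbf{Main obstacle.} The delicate part is essential surjectivity, where one must translate a purely algebraic reflexive $R_w$-module into a geometric object. The construction works because every $R_w$-linear map between free modules of finite rank lifts, via $\End_{\bF_p}(A_w) = R_w$, to a homomorphism between powers of $A_w$, and because the category of abelian varieties admits the quotient $A_w^b / X$ by an abelian subvariety. The Gorenstein hypothesis on $R_w$ together with reflexivity of $M$ is exactly what guarantees the identification $M^{\vee\vee} \simeq M$ that is needed for the recovered module $\rM_w(A)$ to be $M$ itself rather than an overmodule.
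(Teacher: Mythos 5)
Your overall strategy is the same as the paper's: reduce fully faithfulness to $\bZ_\ell$-local statements via Tate's theorem, use Gorenstein duality to compare $\Hom$-groups, and recover an abelian variety from a co-presentation of the reflexive module. Step~3 is essentially the paper's proof of essential surjectivity, and the reduction to the even-degree Gorenstein case is handled correctly.

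The gap is in Steps~1 and~2 at $\ell=p$: you treat the contravariant Dieudonn\'e functor $T_p(-)$ as though it were covariant, like $T_\ell(-)$ for $\ell\neq p$. Tate's theorem at $p$ in fact gives $\Hom_{\bF_p}(A,B)\otimes\bZ_p\simeq\Hom_{R_w\otimes\bZ_p}(T_p(B),T_p(A))$, with the arguments swapped, so the correct $p$-adic identification in Step~1 is $\rM_w(A)\otimes\bZ_p\simeq\Hom_{R_w\otimes\bZ_p}(T_p(A_w),T_p(A))\simeq T_p(A)$, not its $R_w\otimes\bZ_p$-dual. Your rank conclusion survives because ranks do not see the switch, but Step~2 does not go through as written: at $p$ both sides of the comparison map become $\Hom_{R_w\otimes\bZ_p}(T_p(B),T_p(A))$ with no duality step, so the appeal to the anti-autoequivalence $(-)^\vee$ is out of place there, and the identifications you quote are not the ones Tate's theorem provides. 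The conclusion is still true and the fix is short: split $\ell\neq p$ from $\ell=p$, and at $p$ observe that the comparison map is essentially the identity once $T_p(A_w)$ is free of rank one over $R_w\otimes\bZ_p$; this is exactly what the paper does, using the covariant functor $N\mapsto\Hom_{R_w\otimes\bZ_p}(T_p(A_w),N)$ in place of your contravariant $(-)^\vee$. Apart from this variance slip, the argument is sound and matches the paper's.
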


\begin{proof} 
We begin by showing that $\rM_w(-)$ is fully faithful. The map 
\[
f: \Hom_{\bF_p}(A', A'') \longrightarrow \Hom_{R_\pi}(\rM_w(A''), \rM_w(A'))
\]
is a homomorphism of finitely generated $\bZ$-modules, hence it is an isomorphism if and only if it is an isomorphism after scalar extension $-\otimes \bZ_\ell$ for all primes $\ell$.

We first treat the case $\ell \not= p$. For $N \in \Refl(R_w \otimes \bZ_\ell)$, set
\[
N^\vee = \Hom_{R_w \otimes \bZ_\ell}(N,T_\ell(A_w))
\]
which is isomorphic to the $R_w\otimes\bZ_\ell$-dual of $N$, in view of our choice of $A_w$. The isomorphism of Theorem~\ref{TateThm} gives a natural isomorphism of contravariant  functors 
\begin{equation} \label{eq:elladictatefromT}
(T_\ell(-))^\vee = \Hom_{R_w \otimes \bZ_\ell}(T_\ell(-),T_\ell(A_w)) \simeq \rM_w(-) \otimes \bZ_\ell
\end{equation}
on $\AV_w$ (see Remark \ref{localRwStructure}). This translates into the following commutative diagram:
\[
\xymatrix@M+1ex{
\Hom_{\bF_p}(A',A'') \otimes \bZ_\ell \ar[r]^\simeq \ar[d]^{f \otimes \bZ_\ell}  & \Hom_{R_w \otimes \bZ_\ell}(T_\ell(A'),T_\ell(A'')) \ar[d]^{(-)^\vee}  \\
\Hom_{R_w}(\rM_w(A''),\rM_w(A')) \otimes \bZ_\ell \ar[r]^\simeq  & \Hom_{R_w \otimes \bZ_\ell}(T_\ell(A'')^\vee,T_\ell(A')^\vee)
}
\]
where both horizontal maps are isomorphisms as a consequence of Theorem~\ref{TateThm}. Since $R_w \otimes\bZ_\ell$ is a completion of a Gorenstein ring by Theorem~\ref{thm:gorenstein}, it is  itself Gorenstein. Because $T_\ell(A_w)$ is free of rank $1$, this implies that $N \mapsto N^\vee$ is an contravariant autoequivalence of $\Refl(R_w \otimes \bZ_\ell)$, 
see \cite{Ba} Theorem~6.2. Therefore the  right vertical map in the diagram is an isomorphism and we conclude that $f \otimes \bZ_\ell$ is an isomorphism as well.

Concerning the case $\ell = p$, for any $N\in\Refl(R_w\otimes\bZ_p)$ we set
\[
N_\vee = \Hom_{R_w \otimes \bZ_p}(T_p(A_w),N).
\]
The isomorphism of Theorem~\ref{TateThm} then gives a natural isomorphism of contravariant functors 
\begin{equation} \label{eq:dieudonnefromT}
(T_p(-))_\vee = \Hom_{R_w \otimes \bZ_p}(T_p(A_w),T_p(-)) \simeq \rM_w(-) \otimes \bZ_p
\end{equation}
on $\AV_w$, which translates into the following commutative diagram:
\[
\xymatrix@M+1ex{
\Hom_{\bF_p}(A',A'') \otimes \bZ_p \ar[r]^\simeq \ar[d]^{f \otimes \bZ_p}  & \Hom_{R_w \otimes \bZ_p}(T_p(A''),T_p(A')) 
\ar[d]^{(-)_\vee}  \\
\Hom_{R_w}(\rM_w(A''),\rM_w(A')) \otimes \bZ_p \ar[r]^\simeq  & \Hom_{R_w \otimes \bZ_p}(T_p(A'')_\vee,T_p(A')_\vee) .
}
\]
The horizontal maps are isomorphisms by Theorem~\ref{TateThm}.  Since $T_p(A_w)$ is free of rank $1$ over $R_w \otimes \bZ_p$,   the  right vertical map in the diagram is an isomorphism. We conclude that $f \otimes \bZ_p$ is an isomorphism as well.

We have now established that the functor $A\mapsto\rM_w(A)$ from $\AV_w$ to the category
$\Refl(R_w)$ is fully faithful.

\smallskip

In order to show that $\rM_w(-)$ is an equivalence we must now show that this functor is essentially surjective. Let $M \in \Refl(R_w)$ be a reflexive
module. Since $R_w$ is Gorenstein, the natural map $M \to \Hom_{R_w}(\Hom_{R_w}(M, R_w), R_w)$ is an isomorphism. Dualizing a presentation of the dual $\Hom_{R_w}(M, R_w)$, leads to a co-presentation
\[
0 \to M \to (R_w)^n \xrightarrow{\psi} (R_w)^m.
\]
Since $\rM_w(A_w) = \End_{\bF_p}(A_w) = R_w$ we find by fully faithfulness of $\rM_w(-)$ a homomorphism 
\[
\Psi : (A_w)^m \to (A_w)^n 
\]
with $\psi = \rM_w(\Psi)$. The cokernel 
\[
B = \coker(\Psi)
\]
exists and is an abelian variety $B \in \AV_w$. By definition of the cokernel, the functor $\rM_w(-)$ is left-exact, hence 
\[
0 \to \rM_w(B) \to \rM_w((A_w)^n) \xrightarrow{\rM_w(\Psi)} \rM_w((A_w)^m)
\]
and so 
\[
M \simeq \rM_w(B)
\]
as $R_w$-modules. This completes the proof of essential surjectivity.

We are only left with showing that $\Rk_\bZ(\Hom_{\bF_p}(A, A_w)) = 2\dim(A)$, for all $A$ in $\AV_w$. 
The statement is additive in $A$ and depends only on the isogeny class of $A$ and $A_w$. 
Recall that for any $\pi\in W_p$, we have chosen 
 a simple abelian variety $B_\pi$ over $\bF_p$ whose associated Weil $p$--number represents $\pi$.
Because $A_w$ is isogenous to 
$\prod_{\pi\in w} B_\pi$, it is enough to show that for
any $\pi\in w$ we have
\[
\Rk_\bZ(\Hom_{\bF_p}(B_{\pi}, \prod_{\pi'\in w} B_{\pi'})) = 2\dim(B_\pi).
\]
This follows from $\Rk_\bZ(\End_{\bF_p}(B_\pi))=2\dim(B_\pi)$ for all Weil $p$--numbers $\pi\not\sim\sqrt p$ (cf. \cite{Ta}, Th\'eor\`eme 1 ii)),
and the proof of the theorem is complete.
\end{proof}

\subsection{The direct system}
\label{sec:proof}

In order to prove Theorem~\ref{MainThm} we construct a direct system $\mathcal{A}=\varinjlim A_{w}$ consisting of abelian varieties
$A_{w}$ indexed by finite sets $w$ of  Weil $p$--numbers not containing $\sqrt p$, and having the property stated in Proposition
\ref{ConstructionApi}.

Let $v \subseteq w$ be two finite sets of non-real Weil $p$--numbers. By means of the canonical surjection
\[
\pr_{v,w} : R_{w} \surj R_{v},
\]
we may consider $R_{v}$-modules as $R_{w}$-modules such that the action factors over $\pr_{v,w}$. 
Lemma~\ref{lem:reflexiveZfree} shows that 
\[
\Refl(R_{v}) \subseteq \Refl(R_{w})
\]
is a full subcategory. After choosing abelian varieties $A_{v}$ and $A_{w}$ as in 
Proposition~\ref{ConstructionApi} associated to the sets $v$ and $w$ respectively, we obtain a diagram of functors
\begin{equation} \label{eq:changeofWeilstring2}
\xymatrix@M+1ex{
\AV_{w} \ar[rr]^{\Hom_{\bF_p}(-,A_{w})} & & \Refl(R_{w})\\
\AV_{v} \ar@{}[u]|{\displaystyle \bigcup} \ar[rr]^{\Hom_{\bF_p}(-,A_{v})}  && \Refl(R_{v}) \ar@{}[u]|{\displaystyle \bigcup} .
}
\end{equation}
where the vertical functors are natural full subcategories. The diagram \eqref{eq:changeofWeilstring2} need not commute  for arbitrary unrelated choices $A_w$ and $A_v$.
The next proposition shows that for every $A_w$ there is a canonical abelian subvariety $A_{v,w} \subseteq A_w$ that leads to a choice of $A_v$ for which \eqref{eq:changeofWeilstring2}
commutes.

\begin{prop} 
\label{prop:changeofWeilstring} 
Let $w$ be a set of non-real conjugacy classes of Weil $p$--numbers, let
$A_{w}$ be an abelian variety over $\bF_p$ such that $\End_{\bF_p}(A_{w}) = R_{w}$, and let $v\subseteq w$ be any subset. Then the subgroup generated by all images
\[
A_{v,w} : = \left\langle \im(f) \ ; \ f: B \to A_{w}, \ B \in \AV_{v}\right\rangle \subseteq A_{w}
\] 
satisfies the following:
\begin{enumera}
\item \label{propitem:inAVv} $A_{v,w}$ belongs to $\AV_{v}$ and is an abelian subvariety of $A_{w}$.
\item \label{propitem:freeTate} $T_\ell(A_{v,w})$ is free of rank one over $R_{v}\otimes\bZ_\ell$, for all primes $\ell$.
\item \label{propitem:commute} The diagram \eqref{eq:changeofWeilstring2} commutes by choosing $A_{w}$ as the abelian variety associated to $w$ and
$A_v=A_{v,w}$ as that associated to $v$.
\item \label{propitem:onemapenough} $A_{v,w}$ is the image of any map $f: B \to A_w$ such that $w(B) = v$ and $w(\coker(f))=w\setminus v$.
\end{enumera}
\end{prop}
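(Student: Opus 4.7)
The plan is to establish \ref{propitem:inAVv} and \ref{propitem:commute} directly from the definition of $A_{v,w}$, derive \ref{propitem:onemapenough} by exhibiting an explicit map via an idempotent in $R_w \otimes \bQ$, and deduce \ref{propitem:freeTate} from Gorenstein duality for the quotient $R_w \to R_v$.

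For \ref{propitem:inAVv} and \ref{propitem:commute}, Noetherianity of $A_w$ reduces the sum defining $A_{v,w}$ to a finite sum of abelian subvarieties: the image of a morphism of abelian varieties is a subvariety, and a finite sum of subvarieties is the image of their product under the sum map. Each contributing image is a quotient of an object of $\AV_v$, hence itself in $\AV_v$, so $A_{v,w} \in \AV_v$ is an abelian subvariety of $A_w$. Part \ref{propitem:commute} is then immediate: by the defining property of $A_{v,w}$, every morphism $B \to A_w$ with $B \in \AV_v$ factors through $A_{v,w} \inj A_w$, so the equality $\Hom_{\bF_p}(B, A_w) = \Hom_{\bF_p}(B, A_{v,w})$ of $R_v$-modules gives the commutativity of \eqref{eq:changeofWeilstring2} with $A_v = A_{v,w}$.

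For \ref{propitem:onemapenough}, the ring decomposition $R_w \otimes \bQ = \prod_{\pi \in w} \bQ(\pi)$ supplies an idempotent $e_v$ projecting onto the $v$-factors; choosing $N \in \bZ$ with $Ne_v \in R_w = \End_{\bF_p}(A_w)$, the endomorphism $Ne_v$ of $A_w$ has image of Weil support $v$ and cokernel of Weil support $w \setminus v$, witnessing existence. For the characterization, any $f: B \to A_w$ with $w(B) = v$ satisfies $\im(f) \subseteq A_{v,w}$ by definition; conversely, any $g: B' \to A_w$ with $B' \in \AV_v$ composes with $A_w \surj \coker(f) \in \AV_{w \setminus v}$ to give a morphism between objects of disjoint Weil supports, which vanishes by Poincar\'e reducibility, so $g$ factors through $\im(f)$ and $A_{v,w} \subseteq \im(f)$.

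Part \ref{propitem:freeTate}, the main obstacle, follows from the short exact sequence $0 \to A_{v,w} \to A_w \to D \to 0$ with $D \in \AV_{w \setminus v}$. For each prime $\ell$, this gives an exact sequence of Tate modules (contravariant Dieudonn\'e modules if $\ell = p$). Setting $R = R_w \otimes \bZ_\ell$, $J_v = \ker(R \surj R_v \otimes \bZ_\ell)$, and $J_{w \setminus v}$ analogously, the freeness of $T_\ell(A_w) \simeq R$ of rank one, combined with the facts that $J_v$ annihilates $T_\ell(A_{v,w})$ and $J_{w \setminus v}$ annihilates $T_\ell(D)$, identifies $T_\ell(A_{v,w})$ with $\Ann_R(J_v) = J_{w \setminus v}$ for $\ell \ne p$, and dually with $R/J_v$ for $\ell = p$. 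Gorenstein duality applied to the quotient $R \surj R_v \otimes \bZ_\ell$ of $1$-dimensional Gorenstein rings (Theorem~\ref{thm:gorenstein}) identifies $\Ann_R(J_v)$ with the canonical module of $R_v \otimes \bZ_\ell$, which is isomorphic to $R_v \otimes \bZ_\ell$ itself because $R_v \otimes \bZ_\ell$ is Gorenstein. Hence $T_\ell(A_{v,w}) \simeq R_v \otimes \bZ_\ell$ is free of rank one. The delicate point is this Gorenstein duality step; everything else follows from standard properties of Tate and Dieudonn\'e modules.
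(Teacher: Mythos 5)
Your proof is correct but takes a genuinely different route for the key assertion~\ref{propitem:freeTate}. Parts~\ref{propitem:inAVv} and~\ref{propitem:commute} match the paper, and for~\ref{propitem:onemapenough} you argue via disjoint Weil supports and Poincar\'e reducibility where the paper simply counts dimensions in the isogeny category --- both are fine. For~\ref{propitem:freeTate}, the paper reduces via Proposition~\ref{characterizationApi} to showing that $R_w \to \End_{\bF_p}(A_{v,w})$ is surjective, and establishes this from Tate's theorem together with the vanishing of $\Ext^1_{R_w \otimes \bZ_\ell}(T_\ell(C), T_\ell(A_w))$ (Lemma~\ref{lem:vanishingext1}), where $C = A_w/A_{v,w}$. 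You instead compute $T_\ell(A_{v,w})$ directly: inside $T_\ell(A_w) \simeq R_w \otimes \bZ_\ell$, the two annihilation constraints coming from the exact sequence pin it down as the ideal $J_{w\setminus v} = \Ann_{R_w \otimes \bZ_\ell}(J_v)$ when $\ell \neq p$ (and as the quotient $(R_w \otimes \bZ_p)/J_v$ when $\ell = p$), and the codimension-zero adjunction formula for canonical modules along $R_w \surj R_v$ identifies $\Ann_{R_w \otimes \bZ_\ell}(J_v)$ with the canonical module of $R_v \otimes \bZ_\ell$, which is free of rank one since $R_v$ is Gorenstein and $R_v \otimes \bZ_\ell$ is a finite product of complete local rings (hence trivial Picard group). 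Both approaches exploit Theorem~\ref{thm:gorenstein} but through different facets: the paper through injective dimension $\leq 1$, you through the behaviour of the canonical module under quotients. Yours is more explicit and actually identifies $T_\ell(A_{v,w})$ as a submodule of $T_\ell(A_w)$; the paper's is faster once Proposition~\ref{characterizationApi} is available. One minor imprecision: your existence witness for~\ref{propitem:onemapenough}, namely $Ne_v : A_w \to A_w$, has source of Weil support $w$, not $v$; what you want is the inclusion $\im(Ne_v) \inj A_w$, though as you observe the statement is universal over all such $f$ so this is inessential.
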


\begin{proof}
Assertion~\ref{propitem:inAVv} is obvious and assertion~\ref{propitem:commute} follows from the natural equality 
\[
\Hom_{\bF_p}(B,A_{v,w}) = \Hom_{\bF_p}(B,A_{w})
\]
for every $B \in \AV_{v}$, since every morphism $f:B\to A_{w}$ takes values in the subvariety $A_{v,w}\subseteq A_{w}$.

Assertion~\ref{propitem:onemapenough} is obvious once we pass to the semisimple category of abelian varieties up to isogeny. Therefore
$f(B)$ and $A_{v,w}$ have the same dimension. Since by definition $f(B) \subseteq A_{v,w}$ we deduce the claimed equality.

It remains to verify assertion~\ref{propitem:freeTate}, which by Proposition~\ref{characterizationApi} is equivalent to $\End_{\bF_p}(A_{v,w})=R_{v}$.
The natural map 
\begin{equation}\label{RpiRsigma}
R_{w} = \End_{\bF_p}(A_{w}) = \rM_{w}(A_{w}) \longrightarrow \rM_{w}(A_{v,w})  = \End_{\bF_p}(A_{v,w})
\end{equation}
factors through the quotient map $\pr_{v,w} : R_{w} \surj R_{v}$. In order to prove \ref{propitem:freeTate}, it is enough to show that \eqref{RpiRsigma} is surjective.
It suffices to verify surjectivity after $-\otimes \bZ_\ell$ for every prime number $\ell$.  

\smallskip 
Assume first that $\ell\neq p$. Let $C$ be the quotient abelian variety  $C = A_{w}/A_{v,w}$.
 There is an exact sequence of reflexive $R_{w}\otimes \bZ_\ell$-modules
\[
0 \to \rT_\ell(A_{v,w}) \to \rT_\ell(A_{w}) \to \rT_\ell(C) \to 0,
\]
and its $\Ext$-sequence contains
\[
\Hom_{R_{w} \otimes \bZ_\ell}(\rT_\ell(A_{w}),\rT_\ell(A_{w})) \to \Hom_{R_{w} \otimes \bZ_\ell}(\rT_\ell(A_{v,w}),
\rT_\ell(A_{w})) \to \Ext^1_{R_{w} \otimes \bZ_\ell}(\rT_\ell(C),\rT_\ell(A_{w})) .
\]
The $\Ext^1$-term vanishes by Lemma~\ref{lem:vanishingext1}. Therefore Theorem~\ref{TateThm} shows the surjectivity of
\[
\rM_w(A_{w}) \otimes \bZ_\ell =  \Hom_{R_{w} \otimes \bZ_\ell}(\rT_\ell(A_{w}),\rT_\ell(A_{w})) \surj \Hom_{R_{w}
\otimes \bZ_\ell}(\rT_\ell(A_{v,w}),\rT_\ell(A_{w})) = \rM_w(A_{v,w}) \otimes \bZ_\ell.
\]

If $\ell=p$, then the inclusion $A_{v,w}\subseteq A_{w}$ gives a surjection of reflexive $R_{w}\otimes\bZ_p$-modules
\[
\rT_p(A_{w}) \surj \rT_p(A_{v,w}).
\]
Since $T_p(A_{w})$ is free over $R_{w}\otimes\bZ_p$, we deduce a surjection
\[
\Hom_{R_{w}\otimes\bZ_p}(T_p(A_{w}), T_p(A_{w}))\surj \Hom_{R_{w}\otimes\bZ_p}(T_p(A_{w}), T_p(A_{v,w})),
\]
which, by Theorem~\ref{TateThm}, says that $\rM_{w}(A_{w})\otimes\bZ_p\to\rM_{w}(A_{v,w})\otimes\bZ_p$ is surjective.
This completes the proof of the proposition. 
\end{proof}

\subsection{Proof of the main result}
\label{sec:proofmain}

We are now ready to prove our main result. We must show that the abelian varieties $A_w$ that exist by 
Proposition~\ref{ConstructionApi} for each $w$, and which yield equivalences of the desired type on the respective full subcategories $\AV_w$ by 
Theorem~\ref{thm:T(A)onAVpi},  can be chosen in a compatible way for every $v \subseteq w$. This requires a two step process. We use the notation of Proposition~\ref{prop:changeofWeilstring}.
\begin{itemize}
\item
First, we establish compatibility on the set theoretic level: we must fix isomorphism classes for each $A_w$, such that $A_v \simeq A_{v,w}$ for every $v \subseteq w$. 
\item
Secondly, we categorize the first choice:  one must choose isomorphisms $A_v \simeq A_{v,w}$ such that the inclusions 
$\ph_{w,v}: A_v \simeq A_{v,w} \subseteq A_w$ obey the cocyle condition $\ph_{w,v} \circ \ph_{v,u} = \ph_{w,u}$ for $u \subseteq v \subseteq w$ and thus construct an ind-system $\dA = (A_w, \ph_{w,v})$.
\end{itemize}

\begin{proof}[Proof of Theorem~\ref{MainThm}]

For any finite set $w\subseteq W_p$ that avoids $\sqrt{p}$,  let $Z(w)$ be the set of isomorphism classes $[A]$ of abelian varieties $A$ in $\AV_{w}$ such that the natural map 
$R_{w} \to \End_{\bF_p}(A)$ is an isomorphism. The elements of $Z(w)$ all belong to the same isogeny class, and so $Z(w)$ is finite, since there are only finitely many isomorphism classes of abelian varieties over a finite field lying in a given isogeny class 
(in fact, finiteness holds for isomorphism classes of abelian varieties of fixed dimension, cf.~\cite{Mi} Corollary~18.9). 
Moreover, the set $Z(w)$ is non-empty by Proposition~\ref{ConstructionApi}. 

For any pair $v\subseteq w$ of finite sets of non-real Weil $p$--numbers, we construct a map
\[
\zeta_{v,w}:Z(w)  \longrightarrow Z(v)
\]
by $\zeta_{v,w}([A]) = [B]$ where $B$ is the abelian subvariety of $A$ generated by the image of all $f: C \to A$ with $w(C) \subseteq v$.  
Proposition~\ref{prop:changeofWeilstring} states that $\zeta_{v,w}$ indeed takes values in $Z(v)$.

These maps satisfy the compatibility condition 
\[
\zeta_{u,w}=\zeta_{u,v} \zeta_{v,w},
\]
for all tuples $u \subseteq v\subseteq w$, hence they define a projective system
\[
(Z(w), \zeta_{v,w})
\]
indexed by finite subsets $w \subseteq W_p$ with $\sqrt{p} \notin w$. Since the sets $Z(w)$ are finite and non-empty, a standard compactness argument shows that the inverse limit is not empty:
\[
Z = \varprojlim_w Z(w) \not= \emptyset.
\]
We choose a compatible\footnote{We will see later in Remark~\ref{rmk:zetasurjective} that $\zeta_{v,w}$ is always surjective. This extra piece of information simplifies the construction of the system marginally. However, we find it conceptually easier to deduce this fact from the anti-equivalence of Theorem~\ref{MainThm}, hence the order of the assertions and proofs.}
system $z = (z_w) \in Z$ of isomorphism classes of abelian varieties. 

\smallskip

Now we would like to choose abelian varieties $A_w$ in each class $z_w$ and inclusions 
\[
\ph_{w,v}: A_v \to A_w
\]
for every $v \subseteq w$ that are isomorphic to the inclusion from Proposition~\ref{prop:changeofWeilstring} in a compatible way: for $u \subseteq v \subseteq w$ we want 
\[
\ph_{w,u}=\ph_{w,v} \ph_{v,u}.
\]

Because the set of Weil numbers is countable, we may choose a cofinal totally ordered subsystem of finite subsets of $W_p^\com$ 
\[
w_1 \subseteq w_2 \subseteq \ldots \subseteq w_i \subseteq \ldots .
\]
Working first with this totally ordered subsystem, we can construct  a direct system 
\[
\mathcal{A}_0=(A_{w_i},\ph_{w_j,w_i})
\]
of abelian varieties as desired by induction. If $A_{w_i}$ is already constructed, then we choose $A_{w_{i+1}}$ in $z_{w_{i+1}}$ and deduce from $\zeta_{w_{i},w_{i+1}}(z_{w_{i+1}}) = z_{w_i}$ that there is an inclusion $\ph_{w_{i+1},w_i}: A_{w_i} \to A_{w_{i+1}}$ as desired.

Once this is achieved, we may identify all transfer maps  of the restricted system $\dA_0$ with inclusions. Then we can define for a general $w$ the abelian variety $A_w$ by $w \subseteq w_i$ for large enough $i$ by means of the construction of 
Proposition~\ref{prop:changeofWeilstring} as the abelian subvariety 
\[
A_w := A_{w,w_i}  \subseteq A_{w_i},
\]
This choice is well defined, i.e., independent of $i \gg 0$.
Furthermore, there are compatible transfer maps $\ph_{v,w} : A_v \to A_w$ for all $v \subseteq w$ that lead to the desired direct system 
\[
\dA = (A_w, \ph_{w,v}).
\]
In the sense of ind-objects we have $\dA_0 \simeq \dA$ and so $\dA_0$ would suffice for Theorem~\ref{MainThm}, but we wanted to restore symmetry and have $A_w$ for all finite subsets $w \subseteq W_p^\com$.

\smallskip

Let $A$ be any element of $\AV_p^{\rm com}$, set
\[
T(A)=\Hom_{\bF_p}(A, \mathcal{A})  = \varinjlim_w \Hom_{\bF_p}(A, A_{w})  = \varinjlim_w \rM_w(A).
\]
The
groups $\Hom_{\bF_p}(A, A_{w})$ are stable when $w$ is large enough. More precisely, if $w$, $w'$
are finite sets of Weil $p$--numbers with $w(A)\subseteq w\subseteq w'$, then the map 
\[
\varphi_{w',w} \circ -  : \Hom_{\bF_p}(A, A_{w})\to\Hom_{\bF_p}(A, A_{w'})
\]
is an isomorphism (cf.~Proposition~\ref{prop:changeofWeilstring}). 
Moreover, $T(-)$ restricted to
$\AV_{w}$ recovers the functor $\rM_{w}(-)$ of Theorem~\ref{thm:T(A)onAVpi} constructed using the object $A_{w}$ of
$\mathcal{A}$, and induces an anti-equivalence between $\AV_{w}$ and $\Refl(R_{w})$.

Observe that, by the naturality of the Frobenius isogeny, 
for any finite $w \subseteq W_p$ avoiding $\sqrt{p}$, 
and for any $f\in\Hom_{\bF_p}(A, A_{w})$ the diagram
\[
\xymatrix@M+1ex{
A \ar[r]^{\pi_A}\ar[d]_f  & A\ar[d]^f\\
A_{w} \ar[r]^{\pi_{A_{w}}}& A_{w}\\
}
\]
is commutative. This implies that, for $w$ sufficiently large, the action of $F_w \in R_{w}$ on $T(A)$
is given by $T(\pi_A)$, the morphism induced by the Frobenius isogeny $\pi_A$ via functoriality of $T$.

Compatibility in $w$ shows that $T(-)$ induces an anti-equivalence
\[
T = \varinjlim M_w :\AV_{p}^{\rm com} = \varinjlim_w \AV_w  \xrightarrow{\sim} \varinjlim_w \Refl(R_{w}) = \Refl( \dR_p^\com).
\]
Due to the remarks of Section~\S\ref{sec:targetcategory}  this is precisely the claim of Theorem~\ref{MainThm} and so its proof is complete.
\end{proof}

\section{Properties of the functor $T$}
\label{sec:properties}

\subsection{Recovering Tate and Dieudonn\'e module}

Let $A$ be an abelian variety over $\bF_p$, and set $w = w(A)$. 
We explain here how the $R_{w}\otimes\bZ_\ell$-modules $T_\ell(A)$ can be recovered from the pair $(T(A), F)$ attached
to $A$ by Theorem~\ref{MainThm}. We set  for all prime numbers $\ell$ 
\[
\mathcal{R}_\ell = \varprojlim_w (R_{w}\otimes\bZ_\ell),
\]
where in the projective limit $w$ ranges through all finite subsets of $W_p^\com$, and define
\[
T_\ell(\mathcal{A}) = \left\{ 
\begin{array}{rl} 
\varinjlim_w T_\ell(A_{w}) & \ell \not= p \\[1ex]
\varprojlim_w T_p(A_w) & \ell = p
\end{array}
\right.
\]
as the direct limit if $\ell\neq p$ and the projective limit if $\ell=p$ of the system obtained by applying $T_\ell(-)$ to the direct system $\mathcal{A} =(A_{w})_w$ constructed in the proof of Theorem~\ref{MainThm}.

We first discuss the $\ell$-adic Tate module and assume $\ell \not= p$. 
Since for $v \subseteq w$ the map $A_v \to A_w$ is an inclusion of abelian varieties, the induced map $T_\ell(A_v) \to T_\ell(A_w)$ is the inclusion of a direct summand, at least as $\bZ_\ell$-modules. Hence $T_\ell(\dA)$ is a free $\bZ_\ell$-module of countable infinite rank.

\begin{prop}  \label{prop:recovertate}
Let $A$ be an abelian variety over $\bF_p$ with $\sqrt{p} \notin w(A)$. There is a natural isomorphism of $\mathcal{R}_\ell$-modules
\[
T_\ell(A)\stackrel{\sim}{\longrightarrow}\Hom_{\mathcal{R}_\ell}(T(A)\otimes\bZ_\ell, T_\ell(\mathcal{A})).
\]
\end{prop}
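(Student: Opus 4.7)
The strategy is to reduce to a single finite level $w \supseteq w(A)$ in the ind-system defining $T(-)$, where $T_\ell(A_w)$ plays the role of a dualizing module on $\Refl(R_w \otimes \bZ_\ell)$, and then pass to the limit.

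Fix a finite set $w \subseteq W_p^\com$ with $w(A) \subseteq w$. By the stabilization of $T(A) = \varinjlim_w \rM_w(A)$ discussed in Section~\S\ref{sec:proofmain}, the natural map $\rM_w(A) \otimes \bZ_\ell \to T(A) \otimes \bZ_\ell$ is an isomorphism of $R_w \otimes \bZ_\ell$-modules. The identification \eqref{eq:elladictatefromT} established in the proof of Theorem~\ref{thm:T(A)onAVpi} reads
\[
\rM_w(A) \otimes \bZ_\ell \ \simeq \ \Hom_{R_w \otimes \bZ_\ell}(T_\ell(A), T_\ell(A_w)) =: T_\ell(A)^\vee,
\]
and is natural in $A \in \AV_w$. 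Since $R_w \otimes \bZ_\ell$ is one-dimensional Gorenstein (Theorem~\ref{thm:gorenstein}) and $T_\ell(A_w)$ is free of rank one over it (Proposition~\ref{ConstructionApi}), the module $T_\ell(A_w)$ is dualizing, so the functor $N \mapsto N^\vee$ is a contravariant autoequivalence of $\Refl(R_w \otimes \bZ_\ell)$ by \cite{Ba} Theorem~6.2. Since $T_\ell(A)$ is reflexive by Lemma~\ref{lem:reflexiveZfree}, applying $(-)^\vee$ to the displayed isomorphism yields a natural isomorphism
\[
T_\ell(A) \ \simeq \ \Hom_{R_w \otimes \bZ_\ell}(T(A) \otimes \bZ_\ell,\, T_\ell(A_w)).
\]

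It remains to pass from $R_w \otimes \bZ_\ell$ and $T_\ell(A_w)$ to the pro-ring $\mathcal{R}_\ell$ and the ind-module $T_\ell(\mathcal{A})$. Because $T(A) \otimes \bZ_\ell$ is finitely generated as a $\bZ_\ell$-module and the transition maps in $T_\ell(\mathcal{A}) = \varinjlim_{w'} T_\ell(A_{w'})$ are injective (being induced by the abelian subvariety inclusions $\ph_{w'',w'} : A_{w'} \hookrightarrow A_{w''}$), every $\mathcal{R}_\ell$-linear map $T(A) \otimes \bZ_\ell \to T_\ell(\mathcal{A})$ factors through some $T_\ell(A_{w'})$ with $w' \supseteq w$, yielding
\[
\Hom_{\mathcal{R}_\ell}(T(A) \otimes \bZ_\ell, T_\ell(\mathcal{A})) \ = \ \varinjlim_{w' \supseteq w} \Hom_{R_{w'} \otimes \bZ_\ell}(T(A) \otimes \bZ_\ell,\, T_\ell(A_{w'})).
\]
The previous paragraph, applied with $w'$ in place of $w$, identifies each term on the right canonically with $T_\ell(A)$; and by the naturality of \eqref{eq:elladictatefromT} with respect to $\ph_{w'',w'}$ the transition maps of this direct system are the identity, so the limit is $T_\ell(A)$.

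The main technical point is verifying this last naturality: the dualizing isomorphism $T_\ell(A) \simeq (\rM_{w'}(A) \otimes \bZ_\ell)^\vee$ must vary compatibly with $w'$, so that the direct limit collapses onto a single copy of $T_\ell(A)$. This is essentially automatic, since \eqref{eq:elladictatefromT} comes from Tate's theorem and is functorial in both arguments, while the inclusions $\ph_{w'',w'}$ induce the corresponding inclusions on Tate modules.
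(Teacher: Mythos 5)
Your proof is correct and uses the same central idea as the paper: dualize the isomorphism \eqref{eq:elladictatefromT} using that $R_w \otimes \bZ_\ell$ is Gorenstein with free rank-one dualizing module $T_\ell(A_w)$ to obtain $T_\ell(A) \simeq \Hom_{R_w \otimes \bZ_\ell}(T(A) \otimes \bZ_\ell, T_\ell(A_w))$. You then differ from the paper only in how you pass from this single-level statement to the pro-level one. The paper simply observes that $T_\ell(A_w) \subseteq T_\ell(\dA)$ is the \emph{maximal} submodule on which $\dR_\ell$ acts through its quotient $R_w \otimes \bZ_\ell$; since the $\dR_\ell$-action on $T(A) \otimes \bZ_\ell$ factors through $R_w \otimes \bZ_\ell$, any $\dR_\ell$-linear map into $T_\ell(\dA)$ automatically lands in $T_\ell(A_w)$, so $\Hom_{\dR_\ell}(T(A)\otimes\bZ_\ell, T_\ell(\dA)) = \Hom_{R_w\otimes\bZ_\ell}(T(A)\otimes\bZ_\ell, T_\ell(A_w))$ in one step. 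You instead rewrite the left-hand side as a filtered colimit of Hom groups over levels $w' \supseteq w$ and verify that all transition maps become the identity under the dualization isomorphisms. Your route is valid (the factorization through some finite level indeed follows from finite generation of $T(A)\otimes\bZ_\ell$ together with injectivity of the transition maps in $T_\ell(\dA)$, and the compatibility of \eqref{eq:elladictatefromT} with the inclusions $\ph_{w'',w'}$ does hold by functoriality of Tate's theorem), but it requires the naturality check you flag at the end, which the paper's "maximal submodule" observation sidesteps entirely. Both are fine; the paper's is tighter.
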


\begin{proof}
Let $w \subseteq W_p^\com$ be a finite set containing $w(A)$. Since  $R_w \otimes \bZ_\ell$ is Gorenstein, dualizing 
\eqref{eq:elladictatefromT}  yields the first equality in
\[
T_\ell(A) = \Hom_{R_w \otimes \bZ_\ell}(M_{w}(A) \otimes \bZ_\ell,T_\ell(A_w)) = \Hom_{\dR_\ell}(T(A) \otimes \bZ_\ell, T_\ell(\dA)).
\]
The second equality holds, because $T_\ell(A_w) \subseteq T_\ell(\dA)$ is the maximal submodule on which $\dR_\ell$ acts through its quotient $\dR_\ell\to R_w \otimes \bZ_\ell$. 
\end{proof}

Now we address the contravariant Dieudonn\'e module $T_p(A)$. We endow $T_p(\dA)$ with the projective limit topology. If $M$ is a topological $\dR_p$--module
which is finite and free over $\bZ_p$ then the action of $\dR_p$ on $M$ factors through $\dR_p\to R_w \otimes \bZ_p$ for some large enough $w$, by compactness
of $M$. We denote by
\[
M\hat{\otimes}_{\dR_p} T_p(\dA) = \varprojlim_{w \gg \emptyset} M \otimes_{R_w \otimes \bZ_p} T_p(A_w)
\]
the continuous tensor product.

\begin{prop}  \label{prop:recoverdieudonne}
Let $A$ be an abelian variety over $\bF_p$ with $\sqrt{p} \notin w(A)$.  There is a natural isomorphism of $\mathcal{R}_p$-modules
\[
T_p(A) = (T(A) \otimes \bZ_p) \hat{\otimes}_{\dR_p} T_p(\dA).
\]
\end{prop}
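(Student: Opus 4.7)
The plan is to reduce the continuous tensor product to a single stage, using the rank-one freeness of $T_p(A_w)$ over $R_w \otimes \bZ_p$ provided by Proposition~\ref{ConstructionApi} together with the isomorphism \eqref{eq:dieudonnefromT}. First I fix a finite $w \subseteq W_p^\com$ with $w \supseteq w(A)$, so that $T(A) \otimes \bZ_p = M_w(A) \otimes \bZ_p$ (stabilization of the direct system) and, by Proposition~\ref{RwModTF}, its $\dR_p$-module structure factors through $R_{w(A)} \otimes \bZ_p$, and a fortiori through every $R_w \otimes \bZ_p$ with $w \supseteq w(A)$. The crux of the argument is to show that the evaluation pairing
\[
e_w \ : \ M_w(A) \otimes_{R_w \otimes \bZ_p} T_p(A_w) \to T_p(A), \qquad f \otimes m \mapsto T_p(f)(m),
\]
is an isomorphism of $R_w \otimes \bZ_p$-modules. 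Using \eqref{eq:dieudonnefromT} to identify $M_w(A) \otimes \bZ_p$ with $\Hom_{R_w \otimes \bZ_p}(T_p(A_w), T_p(A))$, the map $e_w$ becomes the tautological evaluation $\Hom_S(L, N) \otimes_S L \simeq N$ for the free rank-one module $L = T_p(A_w)$ over $S = R_w \otimes \bZ_p$, which settles this step.

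Next I would match $e_w$ with the projective limit defining the continuous tensor product, by showing that for $v \subseteq w$ both containing $w(A)$ the transition maps in the system are isomorphisms. Geometrically, the inclusion $A_v \inj A_w$ coming from our ind-representing system induces an $R_w$-linear surjection $T_p(A_w) \surj T_p(A_v)$, via the exactness of the contravariant Dieudonn\'e functor (see Remark~\ref{localRwStructure}). The resulting $R_v \otimes \bZ_p$-linear map
\[
R_v \otimes_{R_w} T_p(A_w) \ \surj \ T_p(A_v)
\]
is a surjection between free rank-one modules over $R_v \otimes \bZ_p$ by Proposition~\ref{ConstructionApi} applied to both $A_w$ and $A_v$, and so automatically an isomorphism. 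Since the action on $T(A) \otimes \bZ_p$ already factors through $R_v$, this implies that the transition
\[
(T(A) \otimes \bZ_p) \otimes_{R_w \otimes \bZ_p} T_p(A_w) \ \to \ (T(A) \otimes \bZ_p) \otimes_{R_v \otimes \bZ_p} T_p(A_v)
\]
is an isomorphism as well. The projective limit therefore collapses to any of its terms at $w \supseteq w(A)$, and the family $(e_w)$ assembles into the desired natural $\mathcal{R}_p$-linear isomorphism $T_p(A) \simeq (T(A) \otimes \bZ_p) \hat{\otimes}_{\dR_p} T_p(\dA)$.

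The main obstacle is the compatibility step, namely verifying that $R_v \otimes_{R_w} T_p(A_w) \surj T_p(A_v)$ is bijective. This rests entirely on the rank-one freeness from Proposition~\ref{ConstructionApi}, since a surjection between free rank-one modules over a commutative ring must send a generator to a unit, hence is an isomorphism. Functoriality in $A$ and $\mathcal{R}_p$-linearity then follow from the naturality of \eqref{eq:dieudonnefromT} and of the evaluation pairings $e_w$; this mirrors the $\ell \neq p$ case of Proposition~\ref{prop:recovertate}, with evaluation replacing $\Hom$-duality due to the covariant way the minimal central order $R_w$ acts on contravariant Dieudonn\'e modules over $\bF_p$.
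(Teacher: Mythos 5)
Your proof is correct and takes essentially the same route as the paper: identify $T_p(A)$ with $\rM_w(A) \otimes_{R_w} T_p(A_w)$ via the tautological evaluation, using \eqref{eq:dieudonnefromT} and the rank-one freeness of $T_p(A_w)$ over $R_w \otimes \bZ_p$, and then observe that the projective system defining the continuous tensor product stabilizes. You in fact supply a detail the paper merely asserts, namely that the transition maps are isomorphisms because $R_v \otimes_{R_w} T_p(A_w) \surj T_p(A_v)$ is a surjection between free rank-one $R_v \otimes \bZ_p$-modules, hence bijective.
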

\begin{proof} 
Let $w \subseteq W_p^\com$ be a finite set containing $w(A)$. 
We deduce from \eqref{eq:dieudonnefromT} a natural identification
\begin{align*}
T_p(A) & = \Hom_{R_w \otimes \bZ_p}(T_p(A_w),T_p(A)) \otimes_{R_w \otimes \bZ_p} T_p(A_w) \\
& = \rM_w(A) \otimes_{R_w} T_p(A_w) = (T(A) \otimes \bZ_p) \hat{\otimes}_{\dR_p} T_p(\dA),
\end{align*}
because for $w(A) \subseteq w \subseteq w'$ the natural maps 
\[
(T(A) \otimes \bZ_p) \otimes_{R_{w'} \otimes \bZ_p} T_p(A_{w'}) \to (T(A) \otimes \bZ_p) \otimes_{R_{w} \otimes \bZ_p} T_p(A_{w})
\]
are isomorphisms.
\end{proof}

\subsection{Isogenies and inclusions}
\label{sec:isogenyinclusion}

We discuss how the functor $T(-)$ detects isogenies and inclusions.

\begin{prop} \label{prop:Tonisogenies}
Let $A$ and $B$ be abelian varieties in $\AV_p^{\rm com}$. 
\begin{enumerate}
\item
The map $f:B \to A$ is an isogeny, if and only if  $T(f) \otimes \bQ$ is an isomorphism.
\item 
For an isogeny $f: B \to A$ the map $T(f)$ is injective and the image is of index
\[
\deg(f) = |\coker(T(f))|.
\]
\end{enumerate}
\end{prop}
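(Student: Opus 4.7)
The plan is to deduce (1) from the fact that $T$ is an anti-equivalence of $\bZ$-linear additive categories, and (2) from the local comparisons \eqref{eq:elladictatefromT} and \eqref{eq:dieudonnefromT} combined with a Gorenstein duality computation.

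For (1), inverting every nonzero integer in the $\Hom$-groups extends $T$ to an anti-equivalence of the associated $\bQ$-linear isogeny categories. Hence $f : B \to A$ is an isogeny (i.e.\ invertible in the isogeny category of abelian varieties) if and only if $T(f)$ is invertible in the isogeny category of $\Refl(\dR_p^\com)$, which for a morphism between finite rank $\bZ$-torsion-free modules is equivalent to $T(f) \otimes \bQ$ being a $\bQ$-linear isomorphism.

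For (2), assume $f$ is an isogeny, so $\dim(A) = \dim(B)$ and both $T(A)$ and $T(B)$ are free $\bZ$-modules of rank $2\dim(A)$. By (1), $T(f) \otimes \bQ$ is an isomorphism, so $T(f)$ is injective with finite cokernel. It remains to identify $|\coker T(f)|$ with $\deg(f)$, and I would do this prime by prime. Fix a finite $w \subseteq W_p^\com$ with $w(A) \cup w(B) \subseteq w$ and choose an auxiliary $A_w$. For $\ell \neq p$, the isomorphism \eqref{eq:elladictatefromT} identifies $T(f) \otimes \bZ_\ell$ with the precomposition map $\Hom_{R_w\otimes\bZ_\ell}(T_\ell(f), T_\ell(A_w))$ induced by the injection $T_\ell(f) : T_\ell(B) \hookrightarrow T_\ell(A)$. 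Setting $C = T_\ell(A)/T_\ell(B)$ and applying $\Hom_{R_w \otimes \bZ_\ell}(-,T_\ell(A_w))$ to $0 \to T_\ell(B) \to T_\ell(A) \to C \to 0$, the vanishing of $\Hom(C,T_\ell(A_w))$ (torsion into torsion-free) and of $\Ext^1(T_\ell(A),T_\ell(A_w))$ (by Lemma~\ref{lem:vanishingext1}, noting that $T_\ell(A_w) \simeq R_w \otimes \bZ_\ell$ is free of rank one over the $1$-dimensional Gorenstein ring $R_w \otimes \bZ_\ell$) collapses the long exact sequence to
\[
0 \to T(A)\otimes\bZ_\ell \to T(B)\otimes\bZ_\ell \to \Ext^1_{R_w\otimes\bZ_\ell}(C,T_\ell(A_w)) \to 0.
\]
Local duality for the semi-local $1$-dimensional Gorenstein ring $R_w \otimes \bZ_\ell$, whose rank-one free module $T_\ell(A_w)$ is dualizing, yields $|\Ext^1(C,T_\ell(A_w))| = |C|$; and $|C|$ is the $\ell$-part of $\deg(f)$ by standard properties of Tate modules. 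For $\ell = p$, the rank-one freeness of $T_p(A_w)$ over $R_w \otimes \bZ_p$ together with \eqref{eq:dieudonnefromT} identifies $T(f)\otimes\bZ_p$ directly with the contravariant Dieudonn\'e map $T_p(f) : T_p(A) \to T_p(B)$, whose cokernel is the Dieudonn\'e module of $\ker(f)[p^\infty]$ and therefore has cardinality equal to the $p$-part of $\deg(f)$. Multiplying over all $\ell$ gives $|\coker T(f)| = \deg(f)$.

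The main technical step is the duality identity $|\Ext^1_{R_w\otimes\bZ_\ell}(C,T_\ell(A_w))| = |C|$ in the case $\ell \neq p$: it rests on a rank-one free module being a dualizing module for any $1$-dimensional Gorenstein ring, on the resulting duality $\Ext^1(-,T_\ell(A_w))$ preserving length on finite-length modules, and on the equality of residue field contributions at each local component of $R_w \otimes \bZ_\ell$, so that length preservation upgrades to preservation of cardinality.
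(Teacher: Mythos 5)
Your proposal is correct and follows essentially the same route as the paper: part (1) is the observation that $T$ descends to an anti-equivalence of the $\bQ$-linearized isogeny categories, and part (2) is proved prime-by-prime using \eqref{eq:elladictatefromT} and \eqref{eq:dieudonnefromT}, with the $\ell\neq p$ case reduced to a Gorenstein duality statement. The paper phrases that last step as a cardinality comparison of $\coker(T_\ell(f)^\vee)$ with $\coker(T_\ell(f))$, citing Bass Theorem~6.3(4) plus induction on length, whereas you identify $\coker(T(f))\otimes\bZ_\ell$ with $\Ext^1_{R_w\otimes\bZ_\ell}(C,T_\ell(A_w))$ and invoke Matlis/local duality over the $1$-dimensional Gorenstein local factors to get $|\Ext^1(C,S)|=|C|$; these are the same argument expressed in slightly different language.
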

\begin{proof}
(1) 
An isogeny $f$ has an inverse up to a multiplication by $n$ map for $n=\deg(f)$. Therefore $T(f)$ is an isomorphism after inverting $\deg(f)$. 

Conversely, if $f$ is not an isogeny, then either $\ker(f)$ or $\coker(f)$ have a non-trivial abelian variety as a direct summand up to isogeny. In the presence of such a direct summand the map $T(f) \otimes \bQ$ cannot be an isomorphism.

(2) 
We indicate the $\ell$-primary part by an index $\ell$. Then using Proposition~\ref{prop:recovertate}, for $\ell \not= p$, we have
\[
|\coker(T(f))|_\ell = |\coker(T(f)) \otimes \bZ_\ell| = |\coker (T_\ell(f)^\vee : T_\ell(A)^\vee \to T_\ell(B)^\vee)|.
\]
The duals here are $\Hom(-,R_w\otimes \bZ_\ell)$. Since $R_w \otimes \bZ_\ell$ is reduced Gorenstein of dimension $1$,  we can use \cite{Ba} Theorem 6.3 (4) and induction on the length  to see that 
\[
|\coker (T_\ell(f)^\vee : T_\ell(A)^\vee \to T_\ell(B)^\vee)| = |\coker (T_\ell(f) : T_\ell(B) \to T_\ell(A))| = |\ker(f)|_\ell.
\]
If $\ell = p$, using Proposition~\ref{prop:recoverdieudonne} yields
\[
|\coker(T(f))|_p = |\coker(T(f)) \otimes \bZ_p| = |\coker (T_p(f) : T_p(A) \to T_p(B))| = |\ker(f)|_p,
\]
where the last equality follows from Dieudonn\'e theory.
\end{proof}

\begin{prop} 
\label{prop:Tonsurjectivemaps}
Let $A$ and $B$ be abelian varieties in $\AV_p^{\rm com}$. For a map $f: B \to A$ the following are equivalent.
\begin{enumerate}
\item[(a)] $T(f) : T(A) \surj T(B)$ is surjective.
\item[(b)] The map $f$ can be identified with the inclusion of an abelian subvariety.
\end{enumerate}
\end{prop}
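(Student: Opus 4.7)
My plan is to prove the two implications separately, exploiting the factorization of an abelian variety map and the Gorenstein Ext-vanishing that has already appeared in the construction of $T$.

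For the direction (b)~$\Rightarrow$~(a), I fix a finite $w \subseteq W_p^\com$ containing $w(A) \supseteq w(B)$ and form the quotient abelian variety $A/B$, which lies in $\AV_w$ since $w(A/B) \subseteq w(A)$. For every prime $\ell \ne p$, Tate's theorem yields a short exact sequence of reflexive $R_w \otimes \bZ_\ell$-modules
\[
0 \to T_\ell(B) \to T_\ell(A) \to T_\ell(A/B) \to 0.
\]
Applying $\Hom_{R_w \otimes \bZ_\ell}(-,T_\ell(A_w))$ and invoking Lemma~\ref{lem:vanishingext1} (the $\Ext^1$ vanishes since $T_\ell(A_w) \simeq R_w \otimes \bZ_\ell$ and $T_\ell(A/B)$ is reflexive) gives a surjection, and via the identification \eqref{eq:elladictatefromT} this translates to $T(A) \otimes \bZ_\ell \surj T(B) \otimes \bZ_\ell$. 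For $\ell = p$, contravariant Dieudonné theory turns the inclusion $B \hookrightarrow A$ into a surjection $T_p(A) \surj T_p(B)$, and since $T_p(A_w)$ is free over $R_w \otimes \bZ_p$, applying $\Hom_{R_w \otimes \bZ_p}(T_p(A_w),-)$ preserves surjectivity; combined with \eqref{eq:dieudonnefromT} this yields $T(A) \otimes \bZ_p \surj T(B) \otimes \bZ_p$. Surjectivity of $T(f)$ after $-\otimes \bZ_\ell$ for all $\ell$ implies surjectivity of $T(f)$ itself.

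For the converse (a)~$\Rightarrow$~(b), I use the canonical factorization
\[
f \ : \ B \xrightarrow{\ \pi\ } B/K \xrightarrow{\ g\ } f(B) \xrightarrow{\ j\ } A,
\]
where $K = \ker(f)^0$ is an abelian subvariety of $B$, $\pi$ is the quotient surjection, $j$ is the inclusion of the image, and $g$ is an isogeny of degree $|\ker(f)/K|$. Applying $T$ gives
\[
T(A) \xrightarrow{T(j)} T(f(B)) \xrightarrow{T(g)} T(B/K) \xrightarrow{T(\pi)} T(B),
\]
and I analyze each factor: $T(j)$ is surjective by the direction already established; $T(g)$ is injective with cokernel of order $\deg(g)$ by Proposition~\ref{prop:Tonisogenies}(2); and $T(\pi)$ is injective because the anti-equivalence $T$ turns surjective morphisms into injective ones (a map $B/K \to \dA$ whose pullback along $\pi$ vanishes must itself vanish since $\pi$ is an epimorphism). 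Now, if $T(f)$ is surjective, then since its image is contained in $\im(T(\pi))$, the map $T(\pi)$ must be surjective, hence an isomorphism, and the anti-equivalence forces $\pi$ to be an isomorphism, i.e., $K = 0$. Next, $T(g)$ also becomes surjective, so its finite cokernel of order $\deg(g)$ is trivial, giving $\deg(g) = 1$ and $g$ an isomorphism. Thus $f$ agrees, up to isomorphism of its source, with the inclusion $j : f(B) \hookrightarrow A$.

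The main technical obstacle is direction (b)~$\Rightarrow$~(a): one has to show that $T$ converts injections of abelian varieties into surjections of lattices, which is not a formal consequence of $T$ being an anti-equivalence (since $\AV_p^\com$ is not abelian and only cokernels, not kernels, are guaranteed to exist there). The Gorenstein hypothesis on $R_w$ is exactly what makes this go through, via the $\Ext^1$-vanishing; the remaining subtlety is to treat the $\ell = p$ case by switching to the covariant side of the duality supplied by \eqref{eq:dieudonnefromT}.
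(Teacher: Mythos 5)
Your proof is correct. The (a)~$\Rightarrow$~(b) direction is essentially the paper's argument, slightly expanded: you decompose $f$ as $B \to B/K \to f(B) \to A$ and analyse the three factors, whereas the paper goes a bit faster by using Proposition~\ref{prop:recovertate} to see directly that $T_\ell(f)$ is injective, concluding that $\ker(f)$ is finite, and then applying the degree formula of Proposition~\ref{prop:Tonisogenies} to the resulting isogeny $B \to f(B)$. Both come down to the same degree count.

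Where you genuinely diverge from the paper is in (b)~$\Rightarrow$~(a). You argue prime by prime: for $\ell \neq p$ you dualize the short exact Tate-module sequence against $T_\ell(A_w)$ and kill the $\Ext^1$ obstruction via Lemma~\ref{lem:vanishingext1}, and for $\ell = p$ you pass to the covariant $\Hom_{R_w\otimes\bZ_p}(T_p(A_w),-)$ side and use freeness of $T_p(A_w)$. This is exactly the technique the paper already deploys in the proof of Proposition~\ref{prop:changeofWeilstring} (for the special inclusion $A_{v,w}\subseteq A_w$), here applied to a general inclusion $B\subseteq A$. The paper instead proves (b)~$\Rightarrow$~(a) as a consequence of the already-established (a)~$\Rightarrow$~(b): it observes via Poincar\'e reducibility that $\im T(f)$ has finite index, notes that this image is a reflexive submodule of $T(B)$ and hence equals $T(C)$ for some $C$, obtains the factorization $B\to C\to A$ through the anti-equivalence, and then concludes $B\simeq C$ from $B\to C$ being an injective isogeny. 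The paper's route is shorter given that the image-object formalism is already in place; your route is more self-contained, keeps the logical order (b)$\Rightarrow$(a) first, and makes the role of the Gorenstein $\Ext^1$-vanishing visible at exactly the point where the non-abelianness of $\AV_p^\com$ would otherwise be an obstacle, as you correctly note in your closing remark.
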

\begin{proof}
If $T(f)$ is surjective, Proposition~\ref{prop:recovertate} shows that the induced map $T_\ell(B) \to T_\ell(A)$ is injective. 
Therefore $\ker(f)$ is at most a finite group scheme. We may therefore replace $A$ by the image $A_0$ of $B \to A$ and thus reduce to the case of the isogeny $f_0: B \to A_0$. Here Proposition~\ref{prop:Tonisogenies} implies that $\deg(f_0) = 1$, hence $B = A_0$ and $f$ is indeed an inclusion of an abelian subvariety.

Conversely, if $f: B \to A$ is an inclusion, then there is a map $g:A \to B$ such that $gf : B \to B$ is an isogeny. Therefore $T(f)$ has at least an image of finite index. The image of $T(f)$ is a reflexive submodule in the image of the equivalence $T(-)$, so that there is an abelian variety $C$ and a factorization $B \to C \to A$ with $T(A) \surj T(C)$ surjective and $T(C) \subseteq T(B)$ an inclusion. 

We have already proven that $C \to A$ is an abelian subvariety, and it is easy to see that $B \to C$ is an isogeny. Therefore $B \to C$ is an isomorphism and the proof is complete.
\end{proof}

As an application we prove a variant for objects of $\AV_p$ of Waterhouse's theorem on possible endomorphism rings of $\bF_p$-simple abelian varieties over $\bF_p$, see \cite{Wa} Theorem~6.1.2.

\begin{thm}
Let $w$ be a set of conjugacy classes of non-real Weil $p$--numbers. Then the following are equivalent.
\begin{enumeral}
\item $S$ is an order in $R_w \otimes \bQ$ containing $R_w$.
\item $S$ is isomorphic as an $R_w$-algebra to $\End_{\bF_p}(B)$ for an abelian variety $B$ with $w(B)= w$ and such that all its simple factors up to isogeny occur with multiplicity $1$.
\end{enumeral}
\end{thm}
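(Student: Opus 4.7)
The plan is to apply the contravariant equivalence $M_w = \Hom_{\bF_p}(-,A_w): \AV_w \xrightarrow{\sim} \Refl(R_w)$ of Theorem~\ref{thm:T(A)onAVpi}, where $A_w$ is chosen so that $M_w(A_w) = \End_{\bF_p}(A_w) = R_w$. The easy direction (ii) $\Rightarrow$ (i) is immediate: if $B \sim \prod_{\pi \in w} B_\pi$ with all simple factors of multiplicity one, then each $\End_{\bF_p}(B_\pi)\otimes\bQ = \bQ(\pi)$ since $\pi$ is non-real, hence
\[
\End_{\bF_p}(B)\otimes\bQ = \prod_{\pi \in w}\bQ(\pi) = R_w\otimes\bQ.
\]
This exhibits $\End_{\bF_p}(B)$ as an order in $R_w\otimes\bQ$, and the natural map $R_w \to \End_{\bF_p}(B)$ sending $F \mapsto \pi_B$, $V \mapsto p/\pi_B$ realizes $R_w$ as an $R_w$-subalgebra.

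For the converse (i) $\Rightarrow$ (ii), given an order $S$ in $R_w\otimes\bQ$ containing $R_w$, I regard $S$ as an $R_w$-module via the inclusion. Since $S$ is finitely generated and torsion-free over $\bZ$, Lemma~\ref{lem:reflexiveZfree} ensures that $S$ is a reflexive $R_w$-module, $R_w$ being Gorenstein by Theorem~\ref{thm:gorenstein}. Essential surjectivity of $M_w$ then produces $B\in\AV_w$ with an $R_w$-linear isomorphism $M_w(B)\simeq S$. Fully faithfulness of $M_w$ yields an $R_w$-algebra anti-isomorphism
\[
\End_{\bF_p}(B) \xrightarrow{\sim} \End_{R_w}(S)^{\op}.
\]
Since $S$ is commutative, every $R_w$-linear endomorphism $\phi$ of $S$ satisfies $\phi(s) = s\phi(1)$, so that $\End_{R_w}(S) = S$ canonically as $R_w$-algebras. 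Taking opposites is trivial, so $\End_{\bF_p}(B) \simeq S$ as $R_w$-algebras.

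To see that $w(B) = w$, observe that by Section~\S\ref{linearstructures} the $R_w$-algebra structure map $R_w \to \End_{\bF_p}(B)$ factors as $R_w \twoheadrightarrow R_{w(B)} \hookrightarrow \End_{\bF_p}(B)$, and under the isomorphism with $S$ corresponds to the injection $R_w \hookrightarrow S$. Injectivity of the composition forces $R_w \twoheadrightarrow R_{w(B)}$ to be an isomorphism, hence $w(B) = w$. Writing $B \sim \prod_{\pi \in w} B_\pi^{e_\pi}$ with $e_\pi \geq 1$, we have $\End_{\bF_p}(B)\otimes\bQ = \prod_{\pi \in w} M_{e_\pi}(\bQ(\pi))$ of $\bQ$-dimension $\sum_\pi e_\pi^2[\bQ(\pi):\bQ]$, which must equal $\dim_\bQ S = \sum_\pi [\bQ(\pi):\bQ]$. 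Hence
\[
\sum_{\pi\in w} (e_\pi^2-1)[\bQ(\pi):\bQ]= 0,
\]
giving $e_\pi = 1$ for every $\pi\in w$. No step presents a genuine obstacle: the theorem is a structural consequence of the anti-equivalence $M_w$ together with the Gorenstein property of $R_w$, which together dictionary-translate \enquote{orders between $R_w$ and $R_w\otimes\bQ$} into \enquote{endomorphism rings of abelian varieties with Weil support exactly $w$ and multiplicity-free isogeny decomposition}.
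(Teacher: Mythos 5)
Your proof is correct and follows the same overall strategy as the paper's: reflexivity of $S$ (via Gorensteinness of $R_w$ and Lemma~\ref{lem:reflexiveZfree}) yields an abelian variety $B$ with $\rM_w(B) \simeq S$, and the anti-equivalence then forces $\End_{\bF_p}(B) \simeq \End_{R_w}(S) = S$. Where you diverge is in establishing that $B$ has Weil support exactly $w$ and a multiplicity-free isogeny decomposition. The paper gets this in one stroke by observing that the inclusion $R_w \hookrightarrow S$ corresponds under the anti-equivalence to an isogeny $B \to A_w$ (invoking Proposition~\ref{prop:Tonisogenies}), so $B$ inherits the isogeny type of $A_w \sim \prod_{\pi\in w} B_\pi$. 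You instead argue directly: injectivity of the factored map $R_w \twoheadrightarrow R_{w(B)} \hookrightarrow \End_{\bF_p}(B) \simeq S$ forces $w(B)=w$, and then a $\bQ$-dimension count against $\dim_\bQ(S\otimes\bQ)=\sum_{\pi\in w}[\bQ(\pi):\bQ]$ forces each $e_\pi=1$. This is a self-contained alternative that bypasses the isogeny criterion of Section~\S\ref{sec:isogenyinclusion}, at the cost of a few more lines.

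One small imprecision worth fixing: you justify $\End_{R_w}(S)=S$ by saying ``since $S$ is commutative, every $R_w$-linear endomorphism $\phi$ of $S$ satisfies $\phi(s)=s\phi(1)$.'' Commutativity of $S$ is not by itself the reason; an $R_w$-linear map need not a priori be $S$-linear. What makes it work is that $R_w\subseteq S$ is an inclusion of $\bZ$-lattices of the same rank, hence of finite index, in a torsion-free module. Given $s\in S$, pick $n>0$ with $ns\in R_w$; then $n\phi(s)=\phi(ns)=ns\phi(1)$ by $R_w$-linearity, and torsion-freeness of $S$ yields $\phi(s)=s\phi(1)$. The paper packages the same point as $\End_{R_w}(S)=\{\lambda\in R_w\otimes\bQ\ ;\ \lambda S\subseteq S\}=S$. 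Either way the conclusion you need is correct.
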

\begin{proof}
Since $R_w$ is the minimal central order for abelian varieties $B$ with $w(B) = w$, it is clear that (b) implies (a). 

Conversely, if $S$ is an order containing $R_w$, then $S$ is a reflexive $R_w$-module and thus corresponds to an abelian variety $B$. Let $A_w$ be the abelian variety occuring in the ind-system pro-representing $T(-)$, so that $T(A_w) = R_w$. The inclusion $R_w \subseteq S$ corresponds to an isogeny $\ph : B \to A_w$ by Proposition~\ref{prop:Tonisogenies}, so that $B$ has 
the required Weil support and product structure up to isogeny. Moreover, 
\[
\End_{\bF_p}(B) = \End_{R_w}(S) = \{ \lambda \in R_w \otimes \bQ \ ; \ \lambda S \subseteq S\} = S
\]
shows (a) implies (b).
\end{proof}

\section{Ambiguity and comparison}
\label{sec:ambiguity}

The construction of the functor $T(-)$ in Section~\S\ref{sec:proofmain} depends on the choice of an ind-abelian variety $\dA$. For the sake of distinguishing the different choices we denote in this section
\[
T_\dA(-) = \Hom_{\bF_p}(-,\dA).
\]

\subsection{Continuous line bundles}
\label{sec:pic}

\begin{defi}
Let $W \subseteq W_q$ be a subset.  Let us denote by $\dR_W$ the pro-ring $(R_w)$ where $w$ ranges over the finite subsets of $W$.
\begin{enumerate}
\item
An \textbf{$\dR_W$-module} is a pro-system $\dM = (M_w)$ with $w$ ranging over the finite subsets of $W$, such that $M_w$ is an $R_w$-module and the maps $M_w \to M_v$ for $v \subseteq w$ are $R_w$-module homomorphisms (where $R_w$ acts on $M_v$ via the projection $R_w \to R_v$). Homomorphisms of $\dM$ are levelwise $R_w$-module homomorphisms.

\item
An $\dR_W$-module $\dM$ is \textbf{invertible} if for all $w \subseteq W$ the $R_w$-module $M_w$ is invertible
and for $v \subseteq w$ the maps $M_w \to M_v$ are surjective (equivalently: induce a natural isomorphism $M_w \otimes_{R_w} R_v \simeq M_v$).

\item
The set of isomorphism classes of invertible $\dR_W$-modules forms a group under levelwise tensor products that we denote by $\Pic(\dR_W)$, the \textbf{Picard group} of $\dR_W$.
\end{enumerate}
\end{defi}

For a finite set $w$ of conjugacy classes of Weil $q$-numbers, we set $X_w = \Spec(R_w)$ and consider the ind-schemes
\[
\cX = \varinjlim_w X_w
\]
and for a subset $W \subseteq W_q$ the ind-scheme
\[
\cX_W = \varinjlim_{w \subseteq W} X_w.
\]
with closed immersions as transfer maps, all denoted $i$, induced by the projections $\pr_{v,w} : R_w \surj R_v$. The invertible $\dR_W$-modules are nothing but line bundles on $\cX_W$ and
\[
\Pic(\dR_W) = \Pic(\cX_W) = \rH^1(\cX_W,\dO^\times).
\]
Since $\dO_{\cX_W}^\times = \varprojlim_{w \subseteq W}  i_\ast \dO_{X_w}^\times$ we find an exact sequence
\[
0 \to \varprojlim_{w \subseteq W} \! \! \! ^1 \ R^\times_{w} \to \Pic(\dR_W) \to \varprojlim_{w \subseteq W} \Pic(R_{w}) \to 0. 
\]
The quotient of $\Pic(\dR_W)$ given by $\varprojlim_w \Pic(R_{w})$ parametrizes 
the choices of a compatible system of isomorphism classes of rank one $R_w$-modules $M_w$.  The $\varprojlim^1$-term parametrizes all choices of transfer maps to obtain
an invertible $\dR_W$-module $\mathcal{M}=(M_w)$ from a given compatible choice of isomorphism classes of invertible $R_w$-modules at every level.

\begin{prop}
\label{prop:picrestrictionsurjective}
Let $V \subseteq W \subseteq W_q$ be subsets. Then the natural restriction map 
\[
\Pic(\dR_W) \surj \Pic(\dR_V)
\]
is surjective.
\end{prop}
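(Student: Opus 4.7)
For $V' := W \setminus V$, every finite subset $w \subseteq W$ decomposes uniquely as $w = v \sqcup u$ with $v = w \cap V$ and $u = w \cap V'$. Using the gluing description of Example~\ref{ex:congruences}, the ring $R_w$ fits into a cartesian Milnor square
\[
\xymatrix@M+0.5ex{
R_w \ar[r] \ar[d] & R_v \ar[d] \\
R_u \ar[r] & R_{v,u}
}
\]
where $R_{v,u} := R_v \otimes_S R_u$ with $S := \bZ[F,V]/(FV-q)$. The crucial observation is that $R_{v,u}$ is a \emph{finite} (hence artinian) ring: it is finitely generated as a $\bZ$-module, and after rationalizing it vanishes because $v$ and $u$ are disjoint finite subsets of $W_q$, hence correspond to disjoint collections of closed points in the one-dimensional scheme $\Spec(S \otimes \bQ)$. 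Consequently $\Pic(R_{v,u}) = 0$ (finite rings are semi-local) and the group of units $R_{v,u}^\times$ is finite.

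Milnor patching for the above cartesian square identifies invertible $R_w$-modules with triples $(N_v, L_u, \alpha)$ where $N_v$ and $L_u$ are invertible modules over $R_v$ and $R_u$ respectively, and $\alpha : N_v \otimes_{R_v} R_{v,u} \xrightarrow{\sim} L_u \otimes_{R_u} R_{v,u}$ is a gluing isomorphism. Given an invertible $\dR_V$-module $\dN = (N_v)_{v \subseteq V}$ to be lifted, we take the trivial module $L_u := R_u$ for every finite $u \subseteq V'$, so that we need only select a gluing isomorphism $\alpha_{v,u} : N_v \otimes_{R_v} R_{v,u} \xrightarrow{\sim} R_{v,u}$ for each pair $(v,u)$. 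For fixed $(v,u)$ such an $\alpha_{v,u}$ exists because $\Pic(R_{v,u}) = 0$, and the space of choices is a torsor under the finite group $R_{v,u}^\times$.

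The main technical point is to make these choices compatibly as $(v,u)$ varies in the directed system of pairs of finite subsets. The obstruction to assembling them into a compatible family lies in $\varprojlim^1_{(v,u)} R_{v,u}^\times$, which vanishes because every projective system of finite groups is Mittag--Leffler. We may therefore choose compatible isomorphisms $\alpha_{v,u}$, and the fiber products $M_w := N_v \times_{R_{v,u}, \alpha_{v,u}} R_u$ assemble into an invertible $\dR_W$-module $\dM$ whose restriction to $\dR_V$ is $\dN$. This establishes the desired surjectivity.
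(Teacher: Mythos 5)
Your proof is correct, but it takes a genuinely different route from the paper. The paper works sheaf-theoretically on the ind-scheme $\cX_W$: it realizes the restriction map as part of a long exact sequence in Zariski cohomology, with obstruction in $\rH^2(\cX_W,\dK_{V,W})$, and then kills this $\rH^2$ via a $\varprojlim$/$\varprojlim^1$ decomposition, where the $\varprojlim^1$-term vanishes because the tower $\rH^1(X_w,\dK_{w\cap V,w})$ is surjective (the cokernel sheaves are supported in dimension zero). You instead work module-theoretically with explicit Milnor patching: writing $R_w = R_v \times_{R_{v,u}} R_u$ for the canonical decomposition $w = v \sqcup u$, noting $R_{v,u}$ is a \emph{finite} ring (so $\Pic(R_{v,u})=0$ and $R_{v,u}^\times$ is finite), choosing trivial data on the $V'$-side, and then assembling compatible gluing isomorphisms via the vanishing of $\varprojlim^1 R_{v,u}^\times$ (or, equivalently, by the compactness of an inverse limit of nonempty finite torsors --- the same kind of argument the paper uses to pick a point of $\varprojlim_w Z(w)$ in the proof of the Main Theorem). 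Both arguments ultimately exploit the same geometric fact, namely that the pairwise intersections of the irreducible components of $\Spec(R_w)$ are zero-dimensional, but your version is more elementary and explicit, replacing the abstract $\rH^2$-vanishing by a concrete lifting of gluing data; the paper's version is shorter to state once the Zariski-cohomology formalism on ind-schemes is in place. One small point worth making explicit: the vanishing of $\varprojlim^1$ for a non-linearly-ordered index poset requires either passing to a countable cofinal chain (available here since $W_q$ is countable) or simply replacing the $\varprojlim^1$-language by the direct compactness argument for inverse limits of nonempty finite sets.
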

\begin{proof}
For $v \subseteq w$ define Zariski sheaves $ \dK_{v,w}$ on $\cX_W$ by the  short exact sequence
\[
0 \to \dK_{v,w} \to  i_\ast \dO_{X_w}^\times  \to  i_\ast \dO_{X_v}^\times \to 0
\]
Then $\dK_{V,W} = \varprojlim_{w \subseteq W} \dK_{w \cap V,w}$ is the kernel of 
$\dO^\times_{\cX_W} \surj i_\ast \dO_{\cX_V}^\times$.
The Zariski cohomology sequence yields an exact sequence
\[
\Pic(\dR_W) \to \Pic(\dR_V) \to \rH^2(\cX_W,\dK_{V,W})
\]
and it remains to show vanishing of $\rH^2(\cX_W,\dK_{V,W})$. The pro-structure of $\dK_{V,W}$ leads to a short exact sequence
\[
0 \to \varprojlim_{w \subseteq W} \! \! \! ^1 \rH^1(X_w, \dK_{w \cap V,w}) \to \rH^2(\cX_W,\cK_{V,W}) \to \varprojlim_{w \subseteq W} \rH^2(X_w, \dK_{w \cap V,w}) \to 0.
\]

The $\varprojlim$-term on the right vanishes by cohomological dimension because $\dim(X_w) = 1$.  The $\varprojlim^1$-term on the left vanishes, because we claim that $(\rH^1(X_w, \dK_{w \cap V,w}))_{w \subseteq W}$ is a surjective system, and hence a Mittag--Leffler system. Indeed, for finite subsets $w \subseteq w' \subseteq W$, the cokernel $\cC_{w,w'}$ of 
\[
\dK_{w' \cap V,w'} \to \dK_{w \cap V,w}
\]
is a sheaf with support in at most the finitely many points of $X_{w'}$ that are contained in more than one irreducible component and so $\rH^1(X_{w'},\cC_{w,w'}) = 0$ . Since $\rH^1(X_{w'},-)$ is right exact, we have an exact sequence
\[
\rH^1(X_{w'},\dK_{w' \cap V,w'}) \to \rH^1(X_w,\dK_{w \cap V,w}) \to \rH^1(X_{w'},\cC_{w,w'}) = 0
\]
from which we deduce the claim.
\end{proof}

\subsection{Mixed tensor products}
\label{sec:tensor}

We recall Serre and Tate's well known tensor product construction (see \cite{giraud:hom} for the parallel $\Hom$-construction explaining a construction of Shimura and Taniyama). Let $A$ be an abelian variety over $\bF_q$ and $M$ be a finitely generated $R_w$-module for some $w(A) \subseteq w \subset W_q$. The $R_w$-action on $A$ induces an $R_w$-module structure on the set of $U$-valued points for any $\bF_q$-scheme $U$. 
The fppf-sheafification $\widetilde{M \otimes_{R_w} A}$ of the functor on $\bF_q$-schemes
\[
U \mapsto M \otimes_{R_w} A(U)
\]
is representable by an abelian variety. Indeed, let 
\[
R_w^m \xrightarrow{\ph} R_w^n \to M \to 0
\]
be a finite presentation. The $m \times n$-matrix $\ph$ also defines a map $\ph_A: A^m \to A^n$, and 
\[
M \otimes_{R_w} A(U) = \coker(\ph \otimes \id_{ A(U)}) = \coker(\ph_A : A(U)^m \to A(U)^n) = \coker(\ph_A(U))
\]
so that 
\[
\widetilde{M \otimes_{R_w} A} = \coker(\ph_A)
\]
and this is representable by an abelian variety. We denote the representing object by 
\[
M \otimes_{R_w} A.
\]

If $w \subseteq w'$ and $M'$ is a finitely presented $R_{w'}$-module with $M = M' \otimes_{R_{w'}} R_w$, then  there is an obvious identification
\[
M' \otimes_{R_{w'}} A = M \otimes_{R_w} A.
\]
In particular, if $W \subseteq W_q$ is a subset and $w(A) \subseteq W$, then for any invertible $\dR_W$-module $\dM=(M_w)$ we have a well defined tensor product by 
\[
\dM \otimes_{\dR_W} A := M_w \otimes_{R_w} A
\]
for all sufficiently large finite $w(A) \subseteq w \subseteq W$.

\subsection{Choices of ind-representing objects}
\label{sec:choicesofindrepresentingobjects}

Before we  describe our choices, we need three propositions of independent interest.

\begin{prop}
\label{prop:Homandtensor}
Let $W \subseteq W_q$ be a subset,  $A$ be an abelian variety with $w(A) \subseteq W$, and let $\dM = (M_w)$ be an invertible $\dR_W$-module. Then there is a natural isomorphism
\[
\Hom_{\bF_q}(-, \dM \otimes_{\dR_W} A) \simeq  \dM \otimes_{\dR_W} \Hom_{\bF_q}(-,A) 
\]
 of functors $\AV_W \to \Refl(\dR_W)$.
\end{prop}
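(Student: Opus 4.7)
The plan is to reduce the statement to a level-wise check: for each sufficiently large finite $w\subseteq W$ containing $w(A)$ and $w(B)$, I would exhibit a natural isomorphism
\[
\alpha_w : M_w \otimes_{R_w} \Hom_{\bF_q}(B,A) \xrightarrow{\sim} \Hom_{\bF_q}(B, M_w \otimes_{R_w} A)
\]
sending $m \otimes f$ to the morphism $b \mapsto m \otimes f(b)$, and then use the invertibility of $\dM$ (which supplies canonical identifications $M_w \otimes_{R_w} R_v \simeq M_v$ for $v \subseteq w$) to glue the $\alpha_w$ into a global natural isomorphism of functors $\AV_W \to \Refl(\dR_W)$.

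The key input, proved first, is the formula
\[
\rM_w(M_w \otimes_{R_w} A) \simeq \Hom_{R_w}(M_w, \rM_w(A)) \simeq M_w^{-1} \otimes_{R_w} \rM_w(A),
\]
where $\rM_w = \Hom_{\bF_q}(-,A_w)$ is the anti-equivalence of Theorem~\ref{thm:T(A)onAVpi}. To obtain this, I would choose a finite free presentation $R_w^a \xrightarrow{\ph} R_w^b \to M_w \to 0$, so that by definition $M_w \otimes_{R_w} A = \coker(\ph_A : A^a \to A^b)$ in $\AV_w$. Applying the contravariant exact functor $\rM_w$ turns this cokernel into a kernel, which by comparing with the result of applying $\Hom_{R_w}(-,\rM_w(A))$ to the presentation is precisely $\Hom_{R_w}(M_w, \rM_w(A))$; invertibility of $M_w$ gives the last identification.

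With this formula in hand, chaining the anti-equivalence $\Hom_{\bF_q}(B,-) \simeq \Hom_{R_w}(\rM_w(-), \rM_w(B))$ together with the standard identity $\Hom_{R_w}(M_w^{-1} \otimes X, Y) \simeq M_w \otimes_{R_w} \Hom_{R_w}(X,Y)$ (valid for $M_w$ invertible) produces natural isomorphisms
\[
\Hom_{\bF_q}(B, M_w \otimes_{R_w} A) \simeq \Hom_{R_w}(M_w^{-1} \otimes \rM_w(A), \rM_w(B)) \simeq M_w \otimes_{R_w} \Hom_{\bF_q}(B, A).
\]

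The main obstacle is verifying that this abstract chain coincides with the geometric map $\alpha_w$ given by $m\otimes f \mapsto (b \mapsto m\otimes f(b))$, rather than some twist; this reduces to unraveling the anti-equivalence $\rM_w$ on the free presentation of $M_w$ and tracking a generator $m \otimes f$ through each identification. Once this compatibility is secured, the naturality of every step in $v \subseteq w$ is formal, and the isomorphisms $\alpha_w$ assemble into the desired isomorphism of functors.
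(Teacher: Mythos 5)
Your plan misidentifies the level of machinery that is actually needed, and in doing so introduces a scope limitation that breaks the proof. You invoke the anti-equivalence $\rM_w = \Hom_{\bF_q}(-,A_w)$ of Theorem~\ref{thm:T(A)onAVpi}, but that theorem (and the construction of $A_w$ in Proposition~\ref{ConstructionApi}) is established only for $q=p$ with $\sqrt p \notin w$, whereas Proposition~\ref{prop:Homandtensor} is stated for an arbitrary $q$ and an arbitrary subset $W\subseteq W_q$. Indeed, this proposition feeds into Theorem~\ref{thm:ambiguity}, which is deliberately formulated over $\bF_q$ for general $q$; so a proof that passes through $\rM_w$ cannot cover all the cases the paper needs. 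Moreover, even where $\rM_w$ exists, you flag as ``the main obstacle'' that your chain of Gorenstein-duality identifications must be shown to agree with the geometric map $m\otimes f\mapsto (b\mapsto m\otimes f(b))$ — and then you leave this unverified, so the argument is incomplete on its own terms.

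The paper's proof is much more elementary and avoids both problems. Fixing $w = w(A)$ and $X\in\AV_W$, one observes that both
\[
M \mapsto \Hom_{\bF_q}(X,\, M\otimes_{R_w} A)
\qquad\text{and}\qquad
M \mapsto M\otimes_{R_w}\Hom_{\bF_q}(X,A)
\]
are additive in the finitely generated projective module $M$, and the natural comparison map is the identity when $M=R_w$ (both sides are $\Hom_{\bF_q}(X,A)$). Hence it is an isomorphism for $M=R_w^n$, and since a finite-rank projective is a direct summand of a free module and the comparison map respects direct sums, it is an isomorphism for every projective $M$, in particular for $M$ invertible. Then the naturality in $v\subseteq w$ and the assembly into an isomorphism of functors $\AV_W\to\Refl(\dR_W)$ is formal, exactly as you suggest at the end. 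In short: the reduction to the free rank-one case is the right lever, and it neither requires any hypothesis on $q$ nor any auxiliary choice of $A_w$ or duality unwinding. You should replace the Gorenstein-duality argument by this additivity argument.
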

\begin{proof}
We set $w = w(A)$ and must show for any abelian variety $X$ over $\bF_q$ naturally
\[
\Hom_{\bF_q}(X,M_w \otimes_{R_w} A) = M_w \otimes_{R_w} \Hom_{\bF_q}(X,A).
\]
We extend this claim to projective $R_w$-modules $M$ of finite rank. Since the tensor construction is compatible with direct sums, clearly the claim is additive in $M$ in the sense that it holds for $M'$ and $M''$ if and only if it holds for $M = M' \oplus M''$. This reduces the claim to free modules $M=R_w^n$ and with the same argument to $M= R_w$. Now the claim trivially holds.
\end{proof}

\begin{prop}
\label{prop:anyequivalencerepresentable}
Let $W \subseteq W_q$ be a subset containing no rational Weil $q$--number. Any $\dR_W$-linear contravariant equivalence 
\[
S : \AV_W \to \Refl(\dR_W)
\]
is ind-representable, i.e., of the form
\[
S(-) = \Hom_{\bF_p}(-,\dB)
\]
for an ind-system $\dB = (B_w, \ph_{w,v})$ such that for all finite subsets $v \subseteq w \subseteq W$  the following holds.
\begin{enumer}
\item $w(B_w) = w$.
\item The natural map $R_w \to \End_{\bF_q}(B_w)$ is an isomorphism.
\item \label{propitem:multiplicity1} $B_w$ is isogenous to the product of its simple factors with multiplicity $1$.
\item \label{propitem:transfersareinclusionabstractcase} The maps $\ph_{w,v} : B_v \to B_w$ are inclusions.
\end{enumer}
\end{prop}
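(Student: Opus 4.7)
The plan is to mimic the structure of the proof of Theorem~\ref{MainThm}: first construct individual $B_w$'s from the equivalence $S$, then assemble them into a coherent ind-system. For each finite $w \subseteq W$, essential surjectivity of $S$ supplies an object $B_w^0 \in \AV_W$ with $S(B_w^0) \simeq R_w$. By $\dR_W$-linearity, the natural map $R_w \to \End_{\bF_q}(B_w^0)$ corresponds under $S$ to the identity $R_w \to \End_{R_w}(R_w) = R_w$, hence is an isomorphism, proving (ii). From $S(B_w^0) = R_w \in \Refl(R_w) \subseteq \Refl(\dR_W)$ one has $B_w^0 \in \AV_w$, so $w(B_w^0) \subseteq w$; equality holds because the faithful action of $R_w$ on itself does not factor through any proper quotient $R_v$, giving (i). Writing $B_w^0 \sim \prod_{\pi \in w} B_\pi^{e_\pi}$ and comparing
\[
\prod_{\pi \in w} M_{e_\pi}(\End_{\bF_q}^0(B_\pi)) = \End_{\bF_q}^0(B_w^0) = R_w \otimes \bQ = \prod_{\pi \in w} \bQ(\pi)
\]
by (ii) and \eqref{eq:rationalRw} forces $e_\pi = 1$ and $\End_{\bF_q}^0(B_\pi) = \bQ(\pi)$ for every $\pi \in w$, giving (iii).

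Next, for $v \subseteq w$, I define
\[
B_{v,w}^0 := \langle \im(f) \ ; \ f : B \to B_w^0, \ B \in \AV_v \rangle \subseteq B_w^0,
\]
imitating Proposition~\ref{prop:changeofWeilstring}, as the smallest abelian subvariety of $B_w^0$ through which every morphism from an object of $\AV_v$ factors. Then $B_{v,w}^0 \in \AV_v$ and by construction $\Hom_{\bF_q}(B, B_{v,w}^0) = \Hom_{\bF_q}(B, B_w^0)$ for every $B \in \AV_v$. On the other hand, $\dR_W$-linearity of $S$ yields natural identifications
\[
\Hom_{\bF_q}(B, B_w^0) = \Hom_{R_w}(R_w, S(B)) = S(B) = \Hom_{R_v}(R_v, S(B)) = \Hom_{\bF_q}(B, B_v^0),
\]
using that $S(B) \in \Refl(R_v)$ so that the $R_w$-action on $S(B)$ factors through $\pr_{v,w}$. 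Since $B_v^0$ and $B_{v,w}^0$ represent the same functor on $\AV_v$, Yoneda produces a canonical isomorphism $B_v^0 \simeq B_{v,w}^0$. I expect this step to be the main obstacle: extracting from the purely module-theoretic equivalence $S$ the concrete geometric fact that $S^{-1}(R_v)$ embeds as an abelian subvariety of $S^{-1}(R_w)$ with the correct endomorphism ring $R_v$ (and not some larger order) requires the full force of $\dR_W$-linearity.

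The assembly is then formally identical to the end of the proof of Theorem~\ref{MainThm}. The finite non-empty set $Z(w)$ of isomorphism classes in $S^{-1}(R_w)$ forms a projective system via $\zeta_{v,w} : [B_w] \mapsto [B_{v,w}]$, whose inverse limit $Z = \varprojlim Z(w)$ is non-empty by a compactness argument. After fixing a compatible system $(z_w) \in Z$ and a cofinal totally ordered chain $w_1 \subseteq w_2 \subseteq \ldots$ of finite subsets of $W$, one inductively selects representatives $B_{w_i} \in z_{w_i}$ and honest inclusions $\ph_{w_{i+1}, w_i}: B_{w_i} \hookrightarrow B_{w_{i+1}}$ supplied by the previous paragraph; for arbitrary finite $w \subseteq W$ one picks $i$ with $w \subseteq w_i$ and sets $B_w := B_{w, w_i} \subseteq B_{w_i}$. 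The resulting system $\dB = (B_w, \ph_{w,v})$ satisfies (i)--(iv) by construction, and since both $S$ and $\Hom_{\bF_q}(-, \dB)$ are $\dR_W$-linear contravariant equivalences sending $B_w$ to $R_w$ for every finite $w \subseteq W$, Yoneda identifies them, so $S = \Hom_{\bF_q}(-, \dB)$ as desired.
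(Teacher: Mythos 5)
Your argument reaches the same conclusion by a much longer route that effectively re-proves from scratch facts that the hypothesis hands you for free. The paper's proof is a one-step transport of structure: the pro-system $(R_w, \pr_{v,w})$ of free rank-one modules lives inside $\Refl(\dR_W)$, so applying $S^{-1}$ produces an ind-system $\dB=(B_w,\ph_{w,v})$ equipped with coherent isomorphisms $S(B_w)\simeq R_w$ carrying $S(\ph_{w,v})$ to $\pr_{v,w}$; Yoneda applied to the compatible elements $1\in R_w$ then gives the natural isomorphism $\Hom(-,\dB)\simeq S(-)$, after which (i)--(iv) are short formal checks (for (iv), factor $\ph_{w,v}$ through its image $C$ and note that $S(B_w)\surj S(C)\inj S(B_v)$ composes to the surjection $\pr_{v,w}$, forcing $C\simeq B_v$). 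Your route instead mimics the proof of Theorem~\ref{MainThm}: you build each $B_w^0$ from essential surjectivity, form the subvariety $B^0_{v,w}$, and run the compactness argument to pick compatible representatives. But the compactness step is vacuous here: because $S$ is an equivalence, the set you call $Z(w)$ of isomorphism classes with $S(B)\simeq R_w$ is a singleton, so there is nothing to choose, and the cocycle condition on the $\ph_{w,v}$ is automatic once trivializations $\beta_w\colon S(B_w^0)\simeq R_w$ are fixed and $\ph_{w,v}$ is taken to be the resulting Yoneda-canonical inclusion. Relatedly, the worry you flag in the middle --- that ``extracting the geometric fact'' from the module equivalence is the main obstacle --- is misplaced: $\pr_{v,w}$ is an epimorphism in $\Refl(\dR_W)$, hence $S^{-1}$ of it is a monomorphism in $\AV_W$, and this is exactly the inclusion you need. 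The one place your write-up is genuinely too thin is the last sentence (``Yoneda identifies them''): two $\dR_W$-linear contravariant equivalences both sending each $B_w$ to a module abstractly isomorphic to $R_w$ need not be naturally isomorphic; what is required is a coherent family $\beta_w$ intertwining $\ph_{w,v}$ with $\pr_{v,w}$, and this should be stated explicitly (it is automatic in the paper's transport-of-structure framing, which is precisely what that framing buys you).
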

\begin{proof}
The pro-system $\dR_W = (R_w,\pr_{v,w})$ can be considered as the pro-system of the free rank $1$ modules $R_w \in \Refl(R_w) \subseteq \Refl(\dR_W)$. As such there is a unique ind-system $\dB = (B_w, \ph_{w,v})$ with $S(\dB) = (S(B_w)) = \dR_W$. Yoneda's lemma assigns to the compatible elements $1 \in R_w = S(B_w)$ a natural transformation
\[
\Phi:  \Hom_{\bF_q}(-,\dB) = \varinjlim_{w} \Hom_{\bF_q}(-,B_w) \longrightarrow S(-).
\]
For every $A \in \AV_W$ the map $\Phi$ is the composition of the two isomorphims
\[
\varinjlim_w \Hom(A,B_w) \xrightarrow{S} \varinjlim_w \Hom_{R_w}(R_w,S(A) ) \xrightarrow{\ev_1} S(A)
\]
where $\ev_1$ denotes the evaluation map at $1$. It remains to prove the finer claims on the ind-representing system $\dB$.

\smallskip

Since $S$ is an $\dR_W$-linear equivalence, $\dR_W$ acts on $B_w$ through $R_w$ as on $S(B_w) = R_w$. Here we use that $R_w$ is commutative and so we can forget to pass to the opposite ring due to $S$ being contravariant. Since $F_w$ acts on $B_w$ by the Frobenius isogeny $\pi_{B_w}$, and on $R_w = S(B_w)$ by $F_w \in R_w$ it follows that $w(B_w) = w$. 

\smallskip

The natural map $R_w \to \End_{\bF_w}(B_w)$ is an isomorphism,  because applying the $\dR_W$-linear $S(-)$ transforms it to the map $R_w \to \End_{\dR_w}(R_w)$ which is an isomorphism indeed. We deduce from this also assertion~\ref{propitem:multiplicity1}.

\smallskip

It remains to show  that $\ph_{w,v} : B_v \to B_w$ is isomorphic to an inclusion for all $v \subseteq w$. We denote the image of $\ph_{w,v}$ by $C$. Since $S$ is ind-representable, the surjection $B_v \to C$ becomes an inclusion 
\[
S(C) \inj S(B_v).
\]
Since by construction $S(B_w) \to S(B_v)$ is the surjective map $\pr_{v,w}: R_w \to R_v$ we conclude the in fact $S(C) \simeq S(B_v)$  is an isomorphism. Consequently, because $S$ is an equivalence, we have  $C \simeq B_v$ and 
assertion~\ref{propitem:transfersareinclusionabstractcase}  holds.
\end{proof}

The third proposition is related to Proposition~\ref{characterizationApi}.

\begin{prop} 
\label{prop:Toninvertibleobjects}
Let $W \subseteq W_q$ be a subset containing no rational Weil $q$--number, and let 
\[
S : \AV_W \to \Refl(\dR_W)
\]
be an $\dR_W$-linear contravariant equivalence.

Let $w \subseteq W$ be a finite set of conjugacy classes of Weil $q$--numbers, and let $A$ be an abelian variety over $\bF_q$ with $w = w(A)$. The following are equivalent.
\begin{enumerate}
\item[(a)] The natural map $R_w \to \End_{\bF_q}(A)$ is an isomorphism.
\item[(b)] $S(A)$ is a projective $R_w$-module of rank $1$.
\end{enumerate}
\end{prop}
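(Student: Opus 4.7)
The plan is to transport condition (a) through the contravariant equivalence $S$ into a purely module-theoretic condition on $S(A)$, and then to invoke the endomorphism criterion of Proposition~\ref{prop:EndCriterionForInvertible}. The whole argument rests on the structural properties of $R_w$ already established in the paper.

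First I would record the relevant ring-theoretic input. Since $W$ contains no rational Weil $q$-number, the same holds for the finite subset $w \subseteq W$, so $w$ is trivially of even degree. Theorem~\ref{thm:gorenstein} then ensures that $R_w$ is a $1$-dimensional Gorenstein ring, and $R_w$ is reduced because it embeds into the product of number fields $\prod_{\pi \in w} \bQ(\pi)$. The hypothesis $w(A)=w$ guarantees that $A$ belongs to $\AV_w$, so $S(A)$ is an object of $\Refl(R_w)$; in particular $S(A)$ is a finitely generated reflexive $R_w$-module.

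Next I would unpack the $\dR_W$-linearity of the contravariant equivalence $S$. It yields a ring isomorphism
\[
\End_{\bF_q}(A) \xrightarrow{\sim} \End_{R_w}(S(A)),
\]
intertwining the two canonical structural maps from $R_w$; no opposite ring appears on the right because $R_w$ is commutative. Hence (a) is equivalent to the natural map $R_w \to \End_{R_w}(S(A))$ being an isomorphism. Applying Proposition~\ref{prop:EndCriterionForInvertible} to the reduced Gorenstein ring $R_w$ of dimension at most $1$ and the reflexive module $M = S(A)$, this condition is in turn equivalent to $M$ being locally free of rank $1$, i.e.\ projective of rank $1$. This yields the equivalence of (a) and (b). The only step requiring a small unpacking is the compatibility of the two structural morphisms from $R_w$ in the translation above, which is built into the definition of $\dR_W$-linearity, so I do not foresee any substantial obstacle.
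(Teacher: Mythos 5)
Your proposal is correct and takes essentially the same route as the paper's own proof: translate (a) via the $\dR_W$-linear equivalence $S$ into the condition that $R_w \to \End_{R_w}(S(A))$ is an isomorphism, and then invoke Proposition~\ref{prop:EndCriterionForInvertible} for the reduced $1$-dimensional Gorenstein ring $R_w$. The extra remarks you supply (evenness of $w$, reducedness of $R_w$, $S(A)$ lying in $\Refl(R_w)$) are correct unpackings of what the paper leaves implicit.
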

\begin{proof}
Since $S(-)$ is an equivalence of categories, the map $R_w \to \End_{\bF_p}(A)$ is an isomorphism if and only if the map 
\[
R_w \to \End_{R_w}(S(A)) 
\]
is an isomorphism ($S$ is contravariant but the rings are commutative here). Since $R_w$ is a reduced Gorenstein ring of dimension $1$ by Theorem~\ref{thm:gorenstein}, this is equivalent by  Proposition~\ref{prop:EndCriterionForInvertible} to  $S(A)$ being a projective $R_w$-module of rank $1$.
\end{proof}

We define the tensor product of an invertible $\dR_W$-module $\dM = (M_w)$ and an ind-system $\dA = (A_w,\ph_{w,v})$ of abelian varieties  indexed by finite subsets of $W$ and with $w(A_w) = w$ by 
\[
\dM \otimes \dA  := (M_w \otimes_{R_w} A_w).
\]

\begin{thm}
\label{thm:ambiguity}
Let $W \subseteq W_q$ be a subset containing no rational Weil $q$--number. 

Let $\dA = (A_w, \ph_{w,v})$ be an ind-system of abelian varieties over $\bF_q$  indexed by finite subsets of $W$ such that 
\begin{enumer}
\item $w(A_w) = w$.
\item The natural map $R_w \to \End_{\bF_q}(A_w)$ is an isomorphism.
\item $A_w$ is isogenous to the product of its simple factors with multiplicity $1$.
\item The maps $\ph_{w,v} : A_v \to A_w$ are inclusions.
\end{enumer}

For an invertible $\dR_W$-module $\dM$, the ind-system $\dM \otimes_{\dR_W} \dA$ has the same properties (i)--(iv), and the group $\Pic(\dR_W)$ acts freely and transitively by 
\[
\dA \mapsto \dM \otimes_{\dR_W} \dA
\]
on the set of isomorphism classes of such ind-systems.
\end{thm}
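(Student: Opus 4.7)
My strategy is to verify properties (i)--(iv) for $\dM \otimes_{\dR_W} \dA$ by direct computation, and then to deduce both freeness and transitivity by promoting $T_\dA(-) := \Hom_{\bF_q}(-,\dA)$ to an $\dR_W$-linear contravariant anti-equivalence $T_\dA : \AV_W \xrightarrow{\sim} \Refl(\dR_W)$ satisfying $T_\dA(\dA) \simeq \dR_W$. Establishing this anti-equivalence for an arbitrary $\dA$ with properties (i)--(iv) is the main obstacle, but it is a routine extension of Theorem~\ref{MainThm}: hypothesis (ii) together with the Gorenstein property of $R_w$ (Theorem~\ref{thm:gorenstein}) and Tate's Theorem~\ref{TateThm} imply, via an analog of Proposition~\ref{characterizationApi}, that $T_\ell(A_w)$ is free of rank one over $R_w \otimes \bZ_\ell$ for every prime $\ell$, after which the proof of Theorem~\ref{thm:T(A)onAVpi} applies level-by-level and glues into the desired equivalence on $\AV_W$.

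\textbf{Properties (i)--(iv) for $\dM \otimes_{\dR_W} \dA$.} Since an invertible $R_w$-module is locally free of rank one on $\Spec(R_w)$, the tensor $M_w \otimes_{R_w} A_w$ is locally isomorphic to $A_w$, preserving Weil support and isogeny type, i.e., (i) and (iii). Two applications of Proposition~\ref{prop:Homandtensor} with test object $A_w$ compute
\[
\End_{\bF_q}(M_w \otimes_{R_w} A_w) = M_w \otimes_{R_w} \Hom_{\bF_q}(M_w \otimes_{R_w} A_w, A_w) = M_w \otimes_{R_w} M_w^\vee \otimes_{R_w} R_w = R_w,
\]
proving (ii). For (iv), use the identification $M_v \otimes_{R_v} A_v = M_w \otimes_{R_w} A_v$ (since $M_v = M_w \otimes_{R_w} R_v$ and the $R_w$-action on $A_v$ factors through $R_v$) together with the flatness of $M_w$, which ensures the inclusion $\ph_{w,v} : A_v \hookrightarrow A_w$ tensors to an inclusion $M_v \otimes_{R_v} A_v \hookrightarrow M_w \otimes_{R_w} A_w$.

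\textbf{Freeness and transitivity.} Granting the anti-equivalence $T_\dA$, the remaining claims are formal. If $\dM \otimes_{\dR_W} \dA \simeq \dA$, applying $T_\dA$ and using Proposition~\ref{prop:Homandtensor} at each level gives
\[
\dM \simeq \dM \otimes_{\dR_W} T_\dA(\dA) \simeq T_\dA(\dM \otimes_{\dR_W} \dA) \simeq T_\dA(\dA) \simeq \dR_W,
\]
which proves freeness. For transitivity, given another system $\dA'$ satisfying (i)--(iv), set
\[
M_w := \Hom_{\bF_q}(A_w, A_w') = T_{\dA'}(A_w),
\]
with transfer maps induced by restriction along the inclusions $A_v \hookrightarrow A_w$, noting that any $f : A_w \to A_w'$ carries $A_v$ into the unique maximal subvariety $A_v' \subseteq A_w'$ of Weil support contained in $v$. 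Proposition~\ref{prop:Toninvertibleobjects} applied to $T_{\dA'}$ at $A_w$ (which satisfies hypothesis (ii)) shows that each $M_w$ is an invertible $R_w$-module, and the surjectivity of the transfers $M_w \to M_v$ amounts to $M_w \otimes_{R_w} R_v \xrightarrow{\sim} M_v$, which follows from Proposition~\ref{prop:Homandtensor}. Finally, the evaluation morphism $\ev : M_w \otimes_{R_w} A_w \to A_w'$ is an isomorphism because $T_\dA(\ev)$ identifies with the tautological perfect duality pairing $T_\dA(A_w') = \Hom_{\bF_q}(A_w', A_w) \xrightarrow{\sim} M_w^\vee = T_\dA(M_w \otimes_{R_w} A_w)$; hence $\dM \otimes_{\dR_W} \dA \simeq \dA'$, completing the proof.
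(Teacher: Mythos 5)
Your proposal follows the same overall strategy as the paper: promote $T_\dA(-) = \Hom_{\bF_q}(-,\dA)$ to an $\dR_W$-linear anti-equivalence with $T_\dA(\dA) \simeq \dR_W$, deduce freeness by evaluating the isomorphism $\dM \otimes_{\dR_W} T_\dA(-) \simeq T_\dA(-)$ at $\dA$, and for transitivity set $M_w := \Hom_{\bF_q}(A_w,A_w')$ and verify invertibility via Proposition~\ref{prop:EndCriterionForInvertible}. So far so good. You also verify (i)--(iv) for $\dM \otimes_{\dR_W}\dA$ by direct computation, where the paper instead routes the claim through the proof of Proposition~\ref{prop:anyequivalencerepresentable} applied to the equivalence $T_{\dM\otimes\dA}$; your version is slightly more informal (``locally isomorphic to $A_w$'', flatness for inclusions), but those steps can be made precise by passing to Tate modules, where $M_w \otimes \bZ_\ell$ is free of rank one.

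The genuine gap is the surjectivity of the transfer maps $M_w \to M_v$. You assert that ``the surjectivity of the transfers $M_w \to M_v$ amounts to $M_w \otimes_{R_w} R_v \xrightarrow{\sim} M_v$, which follows from Proposition~\ref{prop:Homandtensor}.'' Proposition~\ref{prop:Homandtensor} only computes $\Hom_{\bF_q}(-,\dM \otimes A)$ as $\dM \otimes \Hom_{\bF_q}(-,A)$; it says nothing about a map $\Hom_{\bF_q}(A_w,A_w') \to \Hom_{\bF_q}(A_v,A_v')$ induced by inclusions, and it cannot, since at this point in the argument you do not yet know that $A_w' \simeq M_w \otimes A_w$. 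Nor does the fact that both $M_w \otimes_{R_w} R_v$ and $M_v$ are invertible $R_v$-modules suffice: a nonzero map of invertible modules over a nondomain (such as $R_v$) need not be an isomorphism. The paper handles this point with real content: it applies $T_{\dA'}$ to $\ph_{w,v}$, lets $C$ be the abelian variety corresponding to the image $\im T_{\dA'}(\ph_{w,v}) \subseteq T_{\dA'}(A_v)$, uses Proposition~\ref{prop:Tonsurjectivemaps} to show $C \hookrightarrow A_w$ is an abelian subvariety, and then uses that $\ph_{w,v}$ is itself an inclusion to force $C = A_v$ by a rank/dimension comparison. You need this argument (or an equivalent one via the $\Ext^1$-vanishing of Lemma~\ref{lem:vanishingext1} on Tate modules) to close the proof.

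A smaller point: the final step, that the evaluation $\ev : M_w \otimes_{R_w} A_w \to A_w'$ is an isomorphism, is asserted by appeal to a ``tautological perfect duality pairing.'' The paper instead shows the natural composition map $\dM \otimes_{\dR_W} T_\dA(-) \to T_{\dA'}(-)$ is an isomorphism of functors by computing $T_{\dA'}(A_w) \otimes_{R_w} \Hom_{R_w}(T_{\dA'}(A_w),T_{\dA'}(X)) = T_{\dA'}(X)$, using invertibility of $T_{\dA'}(A_w)$; your version expresses the same idea but in terms that would benefit from spelling out.
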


\begin{rmk}
If $q = p$ and $W = \{\pi\}$ consists of a single  Weil $p$-number, then in this case Theorem~\ref{thm:ambiguity} is a special case of \cite{Wa}, Theorem~6.1.3, from which
the above result is inspired.
\end{rmk}

\begin{proof}[Proof of Theorem~\ref{thm:ambiguity}]
By a $W$-version of the proof of Theorem~\ref{MainThm} for any ind-system $\dA$ satisfying (i)--(iv) the functor 
\[
T_\dA = \Hom_{\bF_q}(-,\dA) : \AV_W \to \Refl(\dR_W)
\]
is a contravariant $\dR_W$-linear anti-equivalence $\AV_W \to \Refl(\dR_W)$. The effect of the action by $\dM \in \Pic(\dR_W)$ on the represented functors is described by Proposition~\ref{prop:Homandtensor} as
\[
T_{\dM \otimes_{\dR_W} \dA}(-)  = \dM \otimes_{\dR_W} T_\dA(-).
\]
Since $\dM=(M_w)$ is invertible, the functor  $\dM \otimes_{\dR_W}-$ is an auto-equivalence of $\AV_W$. We thus have natural isomorphisms
\[
R_w = \End_{\bF_q}(A_w) = \End_{\bF_q}(\dM \otimes_{\dR_W} A_w) = T_{\dM \otimes_{\dR_W} \dA}(\dM \otimes_{\dR_W} A_w).
\] 
Moreover, since $\dM=(M_w)$ is invertible, the functor $T_{\dM \otimes_{\dR_W} \dA}(-)$ is an anti-equivalence as well, and 
\[
T_{\dM \otimes_{\dR_W} \dA}(\dM \otimes_{\dR_W} \dA) = \dR_W
\]
as pro-systems. It follows from the proof of Proposition~\ref{prop:anyequivalencerepresentable} that $\dM \otimes_{\dR_W} \dA$ also satisfies properties (i)--(iv). This shows that $\Pic(\dR_W)$ indeed acts on isomorphism classes of such $\dA$.

\smallskip

Let $\dM$ be an invertible $\dR_W$-module and let $\dA$ be a pro-system as above such that there is an isomorphism  $\dM \otimes_{\dR_W} \dA \simeq \dA$. Evaluating the resulting natural isomorphism 
\[
\dM \otimes_{\dR_W} T_\dA(-) \simeq T_{\dA}(-).
\]
in $\dA$ itself yields an isomorphism $\dM \otimes_{\dR_W} \dR_W \simeq \dR_W$ and hence $\dM$ must be trivial in 
$\Pic(\dR_W)$. This shows that the action is free.

\smallskip

Let now $\dA$ and $\dB$ be two pro-systems of the type considered. The $\dR_W$-module
\[
\dM = T_{\dB}(\dA) = (\Hom_{\bF_p}(A_w,B_w))
\]
(note that all maps of pro-objects $\dA \to \dB$ are levelwise maps due to $w(A_w) = w = w(B_w)$) is levelwise an invertible $R_w$-module $\dM_w = T_\dB(A_w)$ by Proposition~\ref{prop:Toninvertibleobjects}. The transfer maps $\dM_w \to \dM_v$ agree with $T_{\dB}(\ph_{w,v})$ which is surjective. Indeed, the image corresponds to an abelian variety $C$ such that $\ph_{w,v}$ factors as
\[
A_v \to C \to A_w.
\]
Now the same argument as in the proof of Proposition~\ref{prop:anyequivalencerepresentable} shows that 
$w(C) \subseteq w$ and $C \to A_w$ an inclusion. Since $\ph_{w,v}$ is an inclusion we necessarily have $A_v = C$ and $T_{\dB}(\ph_{w,v})$ is indeed surjective. Consequently, the $\dR_W$-module $\dM = (\dM_w)$ is invertible.

\smallskip

There is a natural map defined by composition of maps
\[
\dM \otimes_{\dR_W} T_\dA(-) = \Hom(\dA,\dB) \otimes \Hom(-,\dA) \xrightarrow{} \Hom(-,\dB) = T_\dB(-).
\]
This is an isomorphism, because for every $X$ in $\AV_W$ we have with $w$ large enough
\begin{align*}
\dM \otimes_{\dR_W} T_\dA(X) & = \Hom_{\bF_p}(A_w,B_w) \otimes_{R_w} \Hom_{\bF_p}(X,A_w)  \\
& = T_\dB(A_w) \otimes_{R_w} \Hom_{R_w}(T_\dB(A_w),T_\dB(X)) \\
& = T_\dB(X).
\end{align*}
Here we have used again the assumption that $T_\dB(-)$ is an equivalence and the fact that $T_\dB(A_w)$ is invertible as an $R_w$-module by Proposition~\ref{prop:Toninvertibleobjects}.
\end{proof}

\begin{cor}
There is a free and transitive action of $\Pic(\dR_p^\com)$ on the isomorphism classes of  ind-systems $\dA$ that represent $\dR_p^\com$-linear anti-equivalences $\dA_p^\com \to \Refl(\dR_p^\com)$.
\end{cor}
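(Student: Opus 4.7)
The plan is to deduce the corollary directly from Theorem~\ref{thm:ambiguity} applied to the subset $W = W_p^\com = W_p \setminus \{\sqrt{p}\}$. First I would check that $W_p^\com$ satisfies the hypotheses: by definition it consists only of non-real conjugacy classes of Weil $p$-numbers, and since any rational Weil $p$-number $\pm p^{e/2}$ is totally real, $W_p^\com$ certainly contains no rational Weil $p$-number. Thus Theorem~\ref{thm:ambiguity} furnishes a free and transitive action of $\Pic(\dR_p^\com)$ on the set of isomorphism classes of ind-systems $\dA = (A_w,\ph_{w,v})$ indexed by finite subsets $w \subseteq W_p^\com$ satisfying conditions (i)--(iv) of that theorem.

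The remaining point is to identify this set of ind-systems with the set of isomorphism classes of ind-systems that represent $\dR_p^\com$-linear anti-equivalences $\AV_p^\com \to \Refl(\dR_p^\com)$. In one direction, Proposition~\ref{prop:anyequivalencerepresentable} (applied with $W = W_p^\com$) shows that any $\dR_p^\com$-linear contravariant equivalence $S$ is representable by an ind-system $\dA$ enjoying precisely conditions (i)--(iv). In the other direction, for any ind-system $\dA$ satisfying (i)--(iv), the functor $T_\dA = \Hom_{\bF_p}(-,\dA)$ is an $\dR_p^\com$-linear anti-equivalence: this is exactly the content of the argument used at the end of Section~\S\ref{sec:proofmain} to prove Theorem~\ref{MainThm}, which only required that the ind-representing system satisfy (i)--(iv) (and which was reinterpreted in $W$-generality at the start of the proof of Theorem~\ref{thm:ambiguity}).

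Combining these two observations, the map $\dA \mapsto T_\dA$ sets up a bijection between the isomorphism classes of ind-systems considered in Theorem~\ref{thm:ambiguity} for $W = W_p^\com$ and the isomorphism classes of ind-systems representing $\dR_p^\com$-linear anti-equivalences $\AV_p^\com \to \Refl(\dR_p^\com)$. Transporting the free transitive action of $\Pic(\dR_p^\com)$ along this bijection yields the corollary. There is no real obstacle in this argument; the only subtlety is to verify that the hypotheses of Theorem~\ref{thm:ambiguity} apply verbatim, but this is immediate from the definition of $W_p^\com$.
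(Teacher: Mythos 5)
Your proof is correct and takes essentially the same approach as the paper, which simply cites Theorem~\ref{thm:ambiguity} (for $W = W_p^\com$), Proposition~\ref{prop:anyequivalencerepresentable}, and the proof of Theorem~\ref{MainThm}; you have filled in those references with the appropriate detail, including the key bijection between ind-systems satisfying (i)--(iv) and ind-systems representing $\dR_p^\com$-linear anti-equivalences.
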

\begin{proof}
This is immediate from Theorem~\ref{thm:ambiguity}, the proof of Theorem~\ref{MainThm} and 
Proposition~\ref{prop:anyequivalencerepresentable}.
\end{proof}

\begin{rmk}
\label{rmk:zetasurjective}
With the notation of Section~\S\ref{sec:proofmain}, for finite sets $v \subseteq w \subseteq W_p$ avoiding $\sqrt{p}$ the transfer map 
\[
\zeta_{v,w} : Z(w) \to Z(v)
\]
in the pro-system of isomorphism classes occuring in the proof of Theorem~\ref{MainThm} is in fact surjective. This follows immediately from Theorem~\ref{thm:ambiguity} and the surjectivity of $\Pic(R_w) \to \Pic(R_v)$ from 
Proposition~\ref{prop:picrestrictionsurjective}. 
\end{rmk}

\begin{cor}
\label{cor:extendsystems}
Let $V \subseteq W \subseteq W_p$ be subsets avoiding $\sqrt{p}$ and let $\dA_V = (A_v,\ph_{w,v})$ be an ind-system of abelian varieties over $\bF_p$ indexed by finite subsets of $V$ as in Theorem~\ref{thm:ambiguity} such that 
\[
T_{\dA_V}  = \Hom_{\bF_p}(-,\dA_V) : \AV_V \to \Refl(\dR_V)
\]
is an $\dR_V$-linear anti-equivalence of categories. Then $\dA_V$ can be extended to an ind-system $\dA_W = (A_w,\ph_{v,w})$ of abelian varieties over $\bF_p$ indexed by finite subsets of $W$ as in Theorem~\ref{thm:ambiguity}. In particular the anti-equivalence 
\[
T_{\dA_W} = \Hom_{\bF_p}(-,\dA_W) : \AV_W \to \Refl(\dR_W)
\]
naturally extends $T_{\dA_V}$.
\end{cor}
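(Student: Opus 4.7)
The plan is to reduce the extension problem to a lifting problem in the Picard group, combining Theorem~\ref{thm:ambiguity} with the surjectivity of Proposition~\ref{prop:picrestrictionsurjective}. First, I would run the construction of the proof of Theorem~\ref{MainThm}, but indexed by all finite subsets of $W$ rather than of $W_p^\com$: the only hypothesis used there is the avoidance of $\sqrt p$, which holds for $W$ by assumption. This yields some auxiliary ind-system $\dB_W = (B_w, \psi_{w,v})$ of abelian varieties over $\bF_p$ indexed by finite subsets of $W$ for which the four properties (i)--(iv) of Theorem~\ref{thm:ambiguity} hold. Restricting the indexing to finite subsets of $V$ produces an ind-system $\dB_V = (B_v, \psi_{v,v'})$ with the analogous properties, so that $T_{\dB_V}$ is an $\dR_V$-linear anti-equivalence.

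Next, by the transitivity part of Theorem~\ref{thm:ambiguity} applied over $V$, there exists an invertible $\dR_V$-module $\dM_V$ and an isomorphism
\[
\dA_V \ \simeq \ \dM_V \otimes_{\dR_V} \dB_V.
\]
Proposition~\ref{prop:picrestrictionsurjective} then lets me choose an invertible $\dR_W$-module $\dM_W$ whose restriction to $V$ represents the class of $\dM_V$ in $\Pic(\dR_V)$. I would then define
\[
\dA_W \ := \ \dM_W \otimes_{\dR_W} \dB_W.
\]
Theorem~\ref{thm:ambiguity} guarantees that $\dA_W$ still satisfies properties (i)--(iv), and by construction its restriction to finite subsets of $V$ is isomorphic to $\dM_V \otimes_{\dR_V} \dB_V \simeq \dA_V$.

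Finally, fixing such an isomorphism of ind-systems allows me to identify $\dA_V$ with the restriction of $\dA_W$, and Proposition~\ref{prop:Homandtensor} (together with $T_{\dM\otimes\dA} \simeq \dM\otimes T_\dA$) gives a natural identification
\[
T_{\dA_W}|_{\AV_V} \ \simeq \ \dM_V \otimes_{\dR_V} T_{\dB_V} \ \simeq \ T_{\dA_V},
\]
so that $T_{\dA_W}$ extends $T_{\dA_V}$ as required. The only delicate point in this plan is the choice of isomorphisms: lifts of $\dM_V$ to $\dR_W$ differ by the kernel of $\Pic(\dR_W)\surj\Pic(\dR_V)$, and one must select the identification of $\dA_V$ with the restriction of $\dA_W$ in a way compatible with the transfer maps at every level. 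However, since the invertible modules $\dM_V$ and $\dM_W|_V$ are isomorphic as $\dR_V$-modules and the tensor construction is levelwise, any such identification extends canonically to all higher levels, so this last compatibility reduces to a choice that can be made once and for all.
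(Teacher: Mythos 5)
Your proof is correct and follows essentially the same route as the paper: choose an auxiliary ind-system $\dB_W$ over $W$ with properties (i)--(iv) of Theorem~\ref{thm:ambiguity}, restrict it to $V$, invoke the free transitive action of $\Pic(\dR_V)$ to write $\dA_V \simeq \dM_V \otimes_{\dR_V} \dB_V$, lift $\dM_V$ to $\dM_W \in \Pic(\dR_W)$ via the surjectivity of Proposition~\ref{prop:picrestrictionsurjective}, and set $\dA_W = \dM_W \otimes_{\dR_W} \dB_W$. The paper dispatches the final identification with ``obviously extends'' where you spell out that a choice of isomorphism $\dM_W|_V \simeq \dM_V$ must be fixed once and for all; that remark is accurate but adds nothing beyond what the paper implicitly assumes.
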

\begin{proof}
We start by choosing an auxiliary ind-system $\dB_W$ indexed by finite subsets of $W$ as in Theorem~\ref{thm:ambiguity}. The restriction 
\[
\Hom_{\bF_p}(- , \dB_W) : \AV_V \to \Refl(\dR_V)
\]
is an $\dR_V$-linear anti-equivalence and ind-represented by the restriction $\dB_V = \dB_W|_V$ of the indices to finite subsets of $V$. By Theorem~\ref{thm:ambiguity} there is an $\dM_V \in \Pic(\dR_V)$ such that 
\[
\dA_V = \dM_V \otimes_{\dR_V} \dB_V.
\]
By Proposition~\ref{prop:picrestrictionsurjective} we can find $\dM_W \in \Pic(\dR_W)$ such that $\dM_V = \dM_W \otimes_{\dR_W} \dR_V$. Then 
\[
\dA_W = \dM_W \otimes_{\dR_w} \dB_W
\]
obviously extends $\dA_V$ in the desired manner.
\end{proof}

\subsection{Comparison with Deligne's functor for ordinary abelian varieties over $\bF_p$}
\label{sec:comparedeligne}
Let  $w\subseteq W_p^\com$ be a finite subset, and let $\tau:R_w\to R_w$ be the automorphism interchanging $F_w$ and $V_w$.
Denote by $R_w^\tau$ the $R_w$--module obtained by letting $R_w$ operate onto itself via $\tau$. Similarly, for an object
$M$ of $\Refl(R_w)$ denote by $M^\tau$ the $R_w$--module $M\otimes_{R_w}R_w^\tau$.

We fix a contravariant equivalence $T$ as in Theorem~\ref{MainThm} and an ind-representing system $\dA = (A_w, \ph_{w',w})$ for $T=T_\dA$. 
The covariant functor on $\AV_p^{\rm com}$
\[
T_\ast(A) = T(A^t)^\tau = \varinjlim_w \Hom(A_w^t, A),
\]
is pro-representable by the dual system $\dA^t = (A_w^t, \ph_{w',w}^t)$ and a version of Theorem~\ref{MainThm} with a covariant equivalence 
\[
T_\ast : \AV_p^\com \to \Refl(\dR_p^\com)
\]
holds. Notice that $T_\ast$ is $\dR_p^\com$-linear, since the dual of the Frobenius isogeny $\pi_A:A\to A$ is the Verschiebung isogeny
$p/\pi_{A^t}:A^t\to A^t$.

We recall that Deligne's functor $T_{\rm Del}$ on $\AV_q^{\ord}$ is defined as
\[
T_{\rm Del}(A) = \rH_1(\tilde{A}(\bC),\bZ),
\]
where $\tilde{A}/W(\ov{\bF}_p)$ is the Serre--Tate canonical lift of $A \otimes_{\bF_q} \ov{\bF}_p$ to characteristic $0$ over the Witt-vectors $W(\ov{\bF}_p)$, and where the $\bC$-valued points are taken with respect to an a priori  fixed embedding $W(\ov{\bF}_p) \inj \bC$. The lattice $T_{\rm Del}(A)$ comes equipped with a natural Frobenius action by 
$F = T_{\rm Del}(\pi_A)$.  

\smallskip

Note that the functor depends on the chosen embedding $W(\ov{\bF}_p) \inj \bC$.

\smallskip

We denote by $W_q^{\ord}$ the set of conjugacy classes of ordinary Weil $q$-numbers, i.e., of Weil $q$-numbers such that at least half of the roots of the characteristic
polynomial are $p$--adic units, when regarded inside an algebraic closure of $\bQ_p$. With the abbreviation $\dR_q^{\ord} = \dR_{W_q^{\ord}}$ the main result of \cite{De}
can be stated as follows.

\begin{thm}[\cite{De}~\S7]
\label{DeligneOrd} 
The covariant functor $T_{\rm Del}$ induces an $\dR_q^{\ord}$-linear equivalence of categories 
\[
T_{\rm Del} : \AV_q^{\ord} \to \Refl(\dR_q^{\ord}).
\]
\end{thm}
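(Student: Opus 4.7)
The plan is to follow Deligne's strategy via Serre--Tate canonical lifts, organized in three main steps, with the $\dR_q^{\ord}$-linearity being automatic from the construction of $F$ as the action of the lifted Frobenius. \textbf{Construction and target axioms.} For any $A \in \AV_q^{\ord}$, Serre--Tate theory produces a canonical lift $\tilde A$ over $W(\bar\bF_p)$ characterized by the property that reduction induces an isomorphism $\End(\tilde A) \xrightarrow{\sim} \End(A_{\bar\bF_p})$; in particular $\pi_A$ lifts to $\tilde \pi_A$. Via the fixed embedding $W(\bar\bF_p) \inj \bC$ we obtain a complex abelian variety $\tilde A_\bC$, and $T_{\rm Del}(A) = \rH_1(\tilde A(\bC), \bZ)$ is free of rank $2\dim(A)$ with Frobenius $F = T_{\rm Del}(\pi_A)$ and Verschiebung $V$ satisfying $FV = q$. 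The comparison with $\ell$-adic cohomology (see next paragraph) identifies the eigenvalues of $F \otimes \bQ$ with the Weil $q$-numbers of the simple factors of $A$, so by ordinariness and Lemma~\ref{lem:reflexiveZfree} the pair $(T_{\rm Del}(A), F)$ lies in $\Refl(\dR_q^{\ord})$.

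\textbf{Full faithfulness.} It suffices to show that $T_{\rm Del}$ is an isomorphism on Hom groups after $\otimes \bZ_\ell$ for every prime $\ell$. For $\ell \neq p$, the comparison between singular and étale homology of $\tilde A_\bC$ yields a natural isomorphism $T_{\rm Del}(A) \otimes \bZ_\ell \simeq T_\ell(A)$ of $\bZ_\ell[\Gal_{\bF_q}]$-modules, and Tate's theorem (Theorem~\ref{TateThm}) concludes. For $\ell = p$ the key input is again Serre--Tate: the canonical lift splits the connected-étale sequence of $\tilde A[p^\infty]$, identifying its étale part with the canonical lift of $A[p^\infty]^\et$ and its multiplicative part with a formal torus whose character group is canonically $\Hom(\mu_{p^\infty}, A)$. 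This yields a functorial decomposition
\[
T_{\rm Del}(A) \otimes \bZ_p \;\simeq\; T_p(A)^\et \;\oplus\; \Hom(\mu_{p^\infty}, A),
\]
and full faithfulness at $p$ reduces to Tate's theorem applied to the étale and multiplicative parts of $A[p^\infty]$ separately.

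\textbf{Essential surjectivity.} We mirror the strategy of Section~\S\ref{TheFunctorTonAVpi}: given a finite $w \subseteq W_q^{\ord}$, Honda--Tate provides a simple ordinary abelian variety for each $\pi \in w$, and an isogeny adjustment argument---facilitated by the product decomposition of the $p$-divisible group of any ordinary abelian variety into étale and multiplicative parts, each governed by a nice invertible module theory---produces $A_w \in \AV_q^{\ord}$ with $T_{\rm Del}(A_w)$ free of rank one over $R_w$; by Theorem~\ref{TateThm} this forces $\End(A_w) = R_w$. Any reflexive module $M \in \Refl(R_w)$ admits a copresentation $0 \to M \to R_w^n \to R_w^m$; lifting the map of free modules to a morphism $A_w^n \to A_w^m$ via full faithfulness and taking its cokernel produces an object $B \in \AV_w \subseteq \AV_q^{\ord}$ with $T_{\rm Del}(B) \simeq M$.

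\textbf{Main obstacle.} The delicate point is the case $\ell = p$ of full faithfulness: establishing the Serre--Tate decomposition of $T_{\rm Del}(A) \otimes \bZ_p$ functorially in $A$ and verifying that $F$ and $V$ act on the two summands via the Frobenius of the étale part and its dual on the multiplicative part, respectively. Once this identification is in place, the remaining steps parallel the treatment of $\AV_w$ in Theorem~\ref{thm:T(A)onAVpi}, with the ordinary Dieudonné theory playing the role that Tate's theorem for general $p$-divisible groups played over $\bF_p$.
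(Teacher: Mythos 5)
This statement is \emph{cited} in the paper, not proved there: it is Deligne's theorem from \cite{De}~\S7, recorded only to set up the comparison in Section~\S\ref{sec:comparedeligne}. So there is no proof in the paper to compare your argument against. What you have written is a reconstruction of Deligne's own argument, and it captures the right overall architecture (Serre--Tate canonical lift, a fixed embedding $W(\ov\bF_p)\inj\bC$, singular homology of the lift, then full faithfulness and essential surjectivity), but a few points deserve a caveat.

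On full faithfulness, your prime-by-prime reduction is workable but is not how Deligne argues, and it pushes all the difficulty onto the $p$-adic case, which you correctly flag as the ``main obstacle'' but leave as an assertion. Deligne's route is cleaner and avoids the $\ell=p$ bookkeeping entirely: the canonical-lift functor $\AV^{\ord}_{\ov\bF_p}\to$ (abelian schemes over $W(\ov\bF_p)$) is fully faithful by Serre--Tate; base change along $W(\ov\bF_p)\inj\bC$ is fully faithful because $\Hom$-schemes of abelian schemes are unramified and of finite type; $\rH_1(-(\bC),\bZ)$ is fully faithful by classical Hodge theory; and then one descends from $\ov\bF_p$ to $\bF_q$ by taking $F$-commutants on both sides. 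If you do insist on the $\ell=p$ route, note that the decomposition you write down needs a Tate twist on the multiplicative summand (it is $\Hom(X_*,\bZ_p(1))$, not the cocharacter lattice itself), trivialized by the chosen embedding; this is exactly the kind of detail that must be made functorial. Your canonical-lift ``characterization'' by $\End(\tilde A)\simeq\End(A_{\ov\bF_p})$ is also imprecise as stated (when $\End(A_{\ov\bF_p})=\bZ$ every lift satisfies it); the correct characterization is the splitting of the connected--\'etale sequence of $\tilde A[p^\infty]$, with full endomorphism lifting a consequence. Finally, for essential surjectivity your copresentation argument borrows the mechanism of Theorem~\ref{thm:T(A)onAVpi}, but the crucial input --- existence of $A_w$ with $\End_{\bF_q}(A_w)=R_w$ --- cannot be imported from Proposition~\ref{ConstructionApi}, whose proof uses $q=p$ in an essential way (via the $\bZ_p$-rank of $T_p$); for general $q$ one must argue via the \'etale/multiplicative product structure of ordinary $p$-divisible groups, which you gesture at but do not carry out. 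These are all fillable gaps in a sketch, but they are genuine gaps.
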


We now compare $T_\ast(-)$ with   $T_{\rm Del}$ when both are restricted to $\AV_p^{\ord}$.

\begin{prop}
The functor $T_{\rm Del}(-)$ is pro-representable by a pro-system $\dA_{\rm Del}$ and 
\[
T_{\rm Del}(\dA_{\rm Del}) = \dR_q^{\ord}.
\]
The dual ind-system $\dA_{\rm Del}^t$ satisfies (i)--(iv) of Proposition~\ref{prop:anyequivalencerepresentable}.
\end{prop}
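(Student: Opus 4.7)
The plan is to construct $\dA_{\rm Del}$ directly from Deligne's equivalence (Theorem~\ref{DeligneOrd}). For each finite $w \subseteq W_q^{\ord}$, essential surjectivity produces an ordinary abelian variety $A_{w,\rm Del}$ together with a chosen isomorphism $T_{\rm Del}(A_{w,\rm Del}) \xrightarrow{\sim} R_w$ of $R_w$-modules. For $v\subseteq w$, full faithfulness applied to the canonical projection $\pr_{v,w}: R_w \surj R_v$ then yields a unique morphism $\psi_{v,w}: A_{w,\rm Del} \to A_{v,\rm Del}$, and faithfulness, together with the identity $\pr_{u,v} \circ \pr_{v,w} = \pr_{u,w}$, forces the cocycle relations $\psi_{u,v} \circ \psi_{v,w} = \psi_{u,w}$. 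Setting $\dA_{\rm Del} = (A_{w,\rm Del}, \psi_{v,w})$, we get $T_{\rm Del}(\dA_{\rm Del}) = (R_w,\pr_{v,w}) = \dR_q^{\ord}$ by construction.

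Pro-representability is then automatic: for $A$ ordinary with $w(A)\subseteq w$, the $\dR_q^{\ord}$-linear equivalence gives
\[
\Hom_{\bF_q}(A_{w,\rm Del},A) \xrightarrow{\sim} \Hom_{R_w}(R_w, T_{\rm Del}(A)) = T_{\rm Del}(A),
\]
compatibly with enlarging $w$, so the ind-limit yields $T_{\rm Del}(A) = \varinjlim_w \Hom_{\bF_q}(A_{w,\rm Del}, A)$. The remaining task is to verify properties (i)--(iv) of Proposition~\ref{prop:anyequivalencerepresentable} for the dual ind-system $\dA_{\rm Del}^t$. Properties (i)--(iii) are formal consequences of Deligne's equivalence: the $\dR_q^{\ord}$-linearity forces $\pi_{A_{w,\rm Del}}$ to act on $T_{\rm Del}(A_{w,\rm Del})= R_w$ as $F_w$, whose characteristic polynomial is $\prod_{\pi\in w}P_\pi(x)$, giving $w(A_{w,\rm Del}^t) = w(A_{w,\rm Del}) = w$; the equivalence identifies $R_w = \End_{R_w}(R_w) \xrightarrow{\sim} \End_{\bF_q}(A_{w,\rm Del})$, and commutativity transports this to $\End_{\bF_q}(A_{w,\rm Del}^t) = R_w$; finally the product decomposition $R_w\otimes\bQ = \prod_{\pi\in w}\bQ(\pi)$ implies that each simple isogeny factor of $A_{w,\rm Del}$ (and of its dual) appears with multiplicity one.

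The main obstacle is property~(iv). The approach is to show that each $\psi_{v,w}$ is a surjection with connected kernel, so that dualization produces the required closed immersion $\psi_{v,w}^t: A_{v,\rm Del}^t \inj A_{w,\rm Del}^t$. Surjectivity: letting $C = \im(\psi_{v,w}) \subseteq A_{v,\rm Del}$, the factorisation $A_{w,\rm Del} \surj C \inj A_{v,\rm Del}$ gives, upon applying $T_{\rm Del}$, a factorisation $R_w \surj T_{\rm Del}(C) \inj R_v$ of $\pr_{v,w}$; since $\pr_{v,w}$ is surjective, the inclusion $T_{\rm Del}(C) \inj R_v$ must be onto, hence an isomorphism, so $C = A_{v,\rm Del}$. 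Connectedness of the kernel: letting $K^0$ be the connected component, the factor $A_{w,\rm Del}/K^0 \to A_{v,\rm Del}$ is an isogeny of degree $|\pi_0(\ker\psi_{v,w})|$, and applying $T_{\rm Del}$ produces an injection of reflexive $R_w$-modules of finite cokernel equal to that degree; since its composition with $R_w \surj T_{\rm Del}(A_{w,\rm Del}/K^0)$ is the surjection $\pr_{v,w}$, this injection must be onto, hence the degree is $1$ and $\ker\psi_{v,w} = K^0$ is connected. The only non-formal ingredient used is exactness of $T_{\rm Del}$ on short exact sequences of ordinary abelian varieties, which reduces via Serre--Tate canonical lifting to the elementary exactness of $\rH_1$ on complex abelian varieties.
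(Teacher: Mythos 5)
Your proposal is correct, but it takes a genuinely different and substantially longer route than the paper. The paper's entire proof is a one-liner: apply Proposition~\ref{prop:anyequivalencerepresentable} to the $\dR_q^{\ord}$-linear \emph{contravariant} equivalence $X \mapsto T_{\rm Del}(X^t)$, which immediately produces the ind-system $\dB = \dA_{\rm Del}^t$ with properties (i)--(iv), and hence the pro-representing system $\dA_{\rm Del} = \dB^t$ for $T_{\rm Del}$. You instead re-derive the content of Proposition~\ref{prop:anyequivalencerepresentable} from scratch in the special case at hand: essential surjectivity to manufacture the $A_{w,{\rm Del}}$, full faithfulness and the cocycle identity for the transfer maps, a Yoneda argument for pro-representability, and then a direct check of (i)--(iv) for the dual ind-system. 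The checks of (i)--(iii) are essentially the same rank and endomorphism-ring computations that appear in the paper's proof of Proposition~\ref{prop:anyequivalencerepresentable}. The interesting divergence is (iv): because you work covariantly on the pro-side, showing that $\psi_{v,w}^t$ is an inclusion forces you to show $\psi_{v,w}$ is surjective \emph{with connected kernel}, and for this you invoke full exactness of $T_{\rm Del}$ (via Serre--Tate canonical lifting and $\rH_1$ of complex abelian varieties), which is a non-formal input. The paper's abstract argument for (iv) sidesteps this entirely: once $S(-)=\Hom(-,\dB)$ is known to be ind-representable, left-exactness is automatic, and the surjectivity of $\pr_{v,w}$ then forces $B_v\to\im(\ph_{w,v})$ to be an isomorphism by a purely formal argument. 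So the paper's route is cheaper (and is precisely why the authors set up Proposition~\ref{prop:anyequivalencerepresentable} in advance), while yours is more explicit but pays for that with an appeal to exactness of the Deligne functor. Both are valid.
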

\begin{proof}
This follows from Proposition~\ref{prop:anyequivalencerepresentable} applied to the functor $X \mapsto T_{\rm Del}(X^t)$.
\end{proof}

Let $T_{\ast}^{\ord}$ (resp.\ $T_{{\rm Del},p}$) denote the restriction of $T_\ast$ (resp.\ $T_{\rm Del}$) to $\AV_p^{\ord}$. The functor $T_\ast^{\ord}$ is
pro-represented by the dual $\dA^{\ord,t}$ of the ind-system $\dA^{\ord}$ which is defined as $\dA$ restricted to indices in $W_p^{\ord}$.

\begin{prop}
\label{prop:compareDeligne}
There is an invertible $\dR_p^{\ord}$-module $\dM = (M_w)_{w \in W_p^{\ord}}$ and a natural isomorphism
\[
\dM \otimes_{\dR_p^{\ord}} T_{{\rm Del},p}(-)     \xrightarrow{\sim} T_\ast^{\ord}(-)
\]
of covariant equivalences $\AV_p^{\ord} \to \Refl(\dR_p^{\ord})$, and a natural isomorphism of ind-systems
\[
\dM \otimes_{\dR_q^{\ord}} \dA_{\rm Del}^t \simeq \dA^{\ord}.
\]
\end{prop}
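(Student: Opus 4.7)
The plan is to apply Theorem~\ref{thm:ambiguity} to a pair of contravariant equivalences obtained by dualizing $T_\ast^{\ord}$ and $T_{{\rm Del},p}$ in the source. First I would rewrite the two covariant functors as contravariant ones by precomposing with the anti-equivalence $A\mapsto A^t$ of $\AV_p^{\ord}$. For $T_\ast^{\ord}$, the identity
\[
T_\ast^{\ord}(X^t) = \varinjlim_w \Hom_{\bF_p}(A_w^t,X^t) = \varinjlim_w \Hom_{\bF_p}(X,A_w)
\]
shows that the contravariant functor $X \mapsto T_\ast^{\ord}(X^t)$ is ind-represented by the ind-system $\dA^{\ord}$ obtained by restricting the indices of $\dA$ to finite subsets of $W_p^{\ord} \subseteq W_p^{\com}$. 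Analogously, the functor $X \mapsto T_{{\rm Del},p}(X^t)$ is ind-represented by $\dA_{\rm Del}^t$, the dual of the pro-system pro-representing Deligne's functor.

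Next I would verify that both ind-systems fall into the framework of Theorem~\ref{thm:ambiguity}. The conditions (i)--(iv) of Proposition~\ref{prop:anyequivalencerepresentable} for $\dA^{\ord}$ are inherited from the corresponding properties of $\dA$ established in the proof of Theorem~\ref{MainThm}, since those properties are local in $w$ and are preserved when restricting to finite subsets of $W_p^{\ord}$. For $\dA_{\rm Del}^t$, the conditions are precisely the content of the preceding proposition (a direct application of Proposition~\ref{prop:anyequivalencerepresentable} to $X \mapsto T_{{\rm Del}}(X^t)$, which is an $\dR_p^{\ord}$-linear contravariant equivalence thanks to Theorem~\ref{DeligneOrd}).

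With these facts in hand, Theorem~\ref{thm:ambiguity} applied to $W = W_p^{\ord}$ (which contains no rational Weil $p$-number since ordinary Weil numbers are non-real) yields a unique isomorphism class $\dM \in \Pic(\dR_p^{\ord})$ such that
\[
\dA^{\ord} \simeq \dM \otimes_{\dR_p^{\ord}} \dA_{\rm Del}^t
\]
as ind-systems. This is exactly the second claim of the proposition. Invoking Proposition~\ref{prop:Homandtensor} then produces a natural isomorphism of represented contravariant functors,
\[
\Hom_{\bF_p}(-, \dA^{\ord}) \simeq \dM \otimes_{\dR_p^{\ord}} \Hom_{\bF_p}(-,\dA_{\rm Del}^t),
\]
and after substituting $X^t$ for $X$ and using the two duality identifications recalled above, this translates to the desired natural isomorphism of covariant equivalences
\[
T_\ast^{\ord}(-) \simeq \dM \otimes_{\dR_p^{\ord}} T_{{\rm Del},p}(-).
\]

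The main potential obstacle is making certain that the $\dR_p^{\ord}$-linear structures on the two functors match on the nose under the dualization. The subtle point is the involution $\tau$ that appears in the definition of $T_\ast$: it is precisely what guarantees that Frobenius on $A$ corresponds to Frobenius (rather than Verschiebung) on $T_\ast(A)$, so that $T_\ast$ is $\dR_p^{\ord}$-linear. Once this is checked for $T_\ast$, and the corresponding linearity for $T_{\rm Del}$ is extracted from Deligne's equivalence (Theorem~\ref{DeligneOrd}), the comparison becomes a formal consequence of Theorem~\ref{thm:ambiguity} as described above.
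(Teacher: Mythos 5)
Your proof is correct and takes essentially the same route as the paper, which simply cites Theorem~\ref{thm:ambiguity} applied to $W = W_q^{\ord}$; you have carefully unpacked the intermediate dualization steps and the verification of hypotheses (i)--(iv) that the paper leaves implicit. The one small imprecision — the parenthetical claiming $W_p^{\ord}$ avoids rational Weil $p$-numbers "since ordinary Weil numbers are non-real" — is harmless but slightly beside the point: for $q=p$ (odd extension degree) there are no rational Weil $p$-numbers at all.
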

\begin{proof}
This follows from Theorem~\ref{thm:ambiguity} applied to $W = W_q^{\ord}$. 
\end{proof}

\begin{prop}
\label{prop:extendDeligne}
For an appropriate choice of ind-system $\dA = (A_w,\ph_{v,w})$, the covariant functor $T_\ast$ associated to the functor $T = T_\dA$ of Theorem~\ref{MainThm} extends a given choice of Deligne's functor 
\[
T_{{\rm Del},p} \simeq T_\ast|_{\dA_p^{\ord}} : \AV_p^{\ord} \to \Refl(\dR_p^{\ord}).
\]
\end{prop}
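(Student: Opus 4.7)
The plan is to apply Corollary~\ref{cor:extendsystems} with $V = W_p^{\ord}$ and $W = W_p^{\com}$, using as input the dual $\dA_V := \dA_{\rm Del}^t$ of the pro-system that pro-represents the given choice of Deligne's functor $T_{\rm Del}$, restricted from $\AV_q^{\ord}$ to $\AV_p^{\ord}$. As recorded in the proposition preceding Proposition~\ref{prop:compareDeligne}, this $\dA_V$ is an ind-system indexed by finite subsets of $W_p^{\ord}$ satisfying conditions (i)--(iv) of Proposition~\ref{prop:anyequivalencerepresentable}, and it ind-represents the contravariant $\dR_p^{\ord}$-linear anti-equivalence $X \mapsto T_{\rm Del}(X^t) = \Hom_{\bF_p}(X,\dA_V)$.

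Invoking Corollary~\ref{cor:extendsystems}, I would then extend $\dA_V$ to an ind-system $\dA = (A_w,\ph_{v,w})$ over the finite subsets of $W_p^{\com}$ that still satisfies (i)--(iv) and with $\dA|_{W_p^{\ord}} \simeq \dA_{\rm Del}^t$. This $\dA$ serves as the ind-representing system for $T = T_\dA$ in Theorem~\ref{MainThm}. The associated covariant functor $T_\ast(X) = T(X^t)^\tau$ restricted to $\AV_p^{\ord}$ is then covariantly pro-represented by the pro-system $(A_w^t)_{w \subseteq W_p^{\ord}}$, the dual of $\dA|_{W_p^{\ord}}$, which by our choice is isomorphic to $\dA_{\rm Del}$.

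Finally, I would conclude by applying Proposition~\ref{prop:compareDeligne} to this choice of $\dA$: it yields an invertible module $\dM \in \Pic(\dR_p^{\ord})$ together with a natural isomorphism $\dM \otimes_{\dR_p^{\ord}} T_{{\rm Del},p}(-) \simeq T_\ast^{\ord}(-)$ and an isomorphism of ind-systems $\dM \otimes_{\dR_p^{\ord}} \dA_{\rm Del}^t \simeq \dA^{\ord}$. Since by construction $\dA^{\ord} \simeq \dA_{\rm Del}^t$, the free and transitive action of $\Pic(\dR_p^{\ord})$ from Theorem~\ref{thm:ambiguity} forces $\dM$ to be trivial in $\Pic(\dR_p^{\ord})$. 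Hence $T_\ast|_{\AV_p^{\ord}} \simeq T_{{\rm Del},p}$, as required. The heart of the argument is the lifting step inside Corollary~\ref{cor:extendsystems}, whose proof in turn rests on the surjectivity of $\Pic(\dR_p^{\com}) \twoheadrightarrow \Pic(\dR_p^{\ord})$ proved in Proposition~\ref{prop:picrestrictionsurjective}; without this surjectivity, one could only match $\dA^{\ord}$ up to a fixed obstruction in $\Pic(\dR_p^{\ord})$, but with it every isomorphism class of ind-systems over $W_p^{\ord}$ lifts to one over $W_p^{\com}$, and the trivial class is attained.
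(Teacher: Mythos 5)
Your proof is correct and follows essentially the same route as the paper's: extend $\dA_{\rm Del}^t$ from $W_p^{\ord}$ to $W_p^{\com}$ using the Picard surjectivity of Proposition~\ref{prop:picrestrictionsurjective} via the argument of Corollary~\ref{cor:extendsystems}, then compare via Proposition~\ref{prop:compareDeligne}. The only small remark is that once you have arranged $\dA^{\ord} \simeq \dA_{\rm Del}^t$ as ind-systems, the conclusion $T_\ast^{\ord} \simeq T_{{\rm Del},p}$ already follows directly (both functors are pro-represented by $\dA_{\rm Del}$ after dualizing), so invoking Proposition~\ref{prop:compareDeligne} and the freeness of the $\Pic$-action to show $\dM$ is trivial is a slightly longer detour than strictly necessary, though entirely valid.
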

\begin{proof}
This follows from Proposition~\ref{prop:compareDeligne} together with the argument of Corollary~\ref{cor:extendsystems} based on the surjectivity $\Pic(\dR_p^\com) \to \Pic(\dR_p^{\ord})$ of Proposition~\ref{prop:picrestrictionsurjective}.
\end{proof}


\end{document}